\newcommand{\dd}{\mathrm{d}}
\newcommand{\RR}{\mathbb{R}}
\newcommand{\NN}{\mathbb{N}}
\newcommand{\cO}{\mathcal{O}}
\newcommand{\cN}{\mathcal{N}}
\newtheorem{theorem}{Theorem}
\newtheorem{prop}[theorem]{Proposition}
\newtheorem{lemma}[theorem]{Lemma}
\newtheorem{corol}[theorem]{Corollary}
\theoremstyle{definition}
\DeclareMathOperator{\spec}{spec}
\DeclareMathOperator{\supp}{supp}
\DeclareMathOperator{\diag}{diag}
\DeclareMathOperator{\Span}{span}
\begin{document}

\title[]{Eigenvalue inequalities and absence of threshold resonances
for waveguide junctions}

\author{Konstantin Pankrashkin}

\address{Laboratoire de Math\'ematiques d'Orsay, Univ.~Paris-Sud, CNRS, Universit\'e Paris-Saclay, 91405 Orsay, France}

\email{konstantin.pankrashkin@math.u-psud.fr}
\urladdr{http://www.math.u-psud.fr/~pankrash/}

\begin{abstract}
Let $\Lambda\subset \mathbb{R}^d$ be a domain consisting
of several cylinders attached to a bounded center.
One says that $\Lambda$ admits a threshold
resonance if there exists a non-trivial bounded function $u$ solving
 $-\Delta u=\nu u$ in $\Lambda$ and vanishing at the boundary,
where $\nu$ is the bottom of the essential spectrum of the Dirichlet Laplacian in $\Lambda$.
We give a sufficient condition for the absence of threshold resonances
in terms of the Laplacian eigenvalues on the center.
The proof is elementary and is based on the min-max principle.
Some two- and three-dimensional examples and applications to the study of Laplacians on thin networks
are discussed.
\end{abstract}

\maketitle

\section{Introduction}

Let $\Lambda\subset \RR^d$, $d\ge 2$, be a connected Lipschitz domain 
which can be represented as a family of several half-infinite cylinders
attached to a bounded domain. More precisely, we assume that there exist
bounded connected Lipschitz domains $\omega_j\subset \RR^{d-1}$, called cross-sections,
and $n$ non-intersecting half-infinite cylinders $B_1,\dots, B_n \subset \Lambda$,
isometric respectively to $\RR_+\times \omega_j$, $\RR_+:=(0,+\infty)$,
such that $\Lambda$ coincides with the union $B_1\mathop{\cup}\dots \mathop{\cup} B_n$
outside a compact set, see Figure~\ref{fig1}(a). The cylinders $B_j$ will be called \emph{branches},
  the connected bounded domain
$C:=\Lambda\setminus \overline{\mathstrut B_1\mathop{\cup}\dots \mathop{\cup} B_n}$
will be called \emph{center}, and we assume that the boundary of $C$
is Lipschitz too. We call such a domain $\Lambda$ a \emph{star waveguide}.
Remark that the choice of a center in not unique: any center can be enlarged
by including finite pieces of the branches, see Figure~\ref{fig1}(b).

In the present work, we would like to establish some elementary
conditions guaranteeing
the non-existence of non-trivial bounded solutions to
\begin{equation}
     \label{eq-bd}
-\Delta u=\nu u \text{ in $\Lambda$ }, \quad
u=0 \text{ at $\partial\Lambda$},
\end{equation}
where $\nu$ is the bottom of the essential spectrum of the Dirichlet
Laplacian $-\Delta^\Lambda_D$ acting in $L^2(\Lambda)$. It is standard to see
that $\nu=\min \nu_j$, where $\nu_j$ is the lowest Dirichlet eigenvalue
of the cross-section $\omega_j$, and the spectrum of $-\Delta^\Lambda_D$
consists of the semi-axis $[\nu,\infty)$ and of a finite family
of discrete eigenvalues $\lambda_j(-\Delta^\Lambda_D)$,
$j\in\big\{1,\dots,N(\Lambda)\big\}$, while the case $N(\Lambda)=0$ (no discrete eigenvalues)
is possible. As shown e.g. in \cite[Theorem 4]{mv}, a non-trivial bounded solution
of \eqref{eq-bd} exists iff the resolvent $z\mapsto (-\Delta^\Lambda_D-z)^{-1}$
has a pole at $z=\nu$, and in that case we say that $\Lambda$
admits a threshold resonance.

\begin{figure}
\centering
\centering
\begin{tabular}{cc}
\begin{minipage}[c]{65mm}
\begin{center}
\includegraphics[height=20mm]{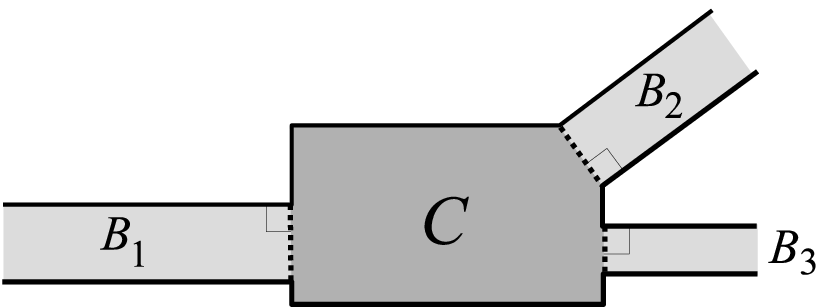}
\end{center}
\end{minipage}
&
\begin{minipage}[c]{65mm}
\begin{center}
\includegraphics[height=20mm]{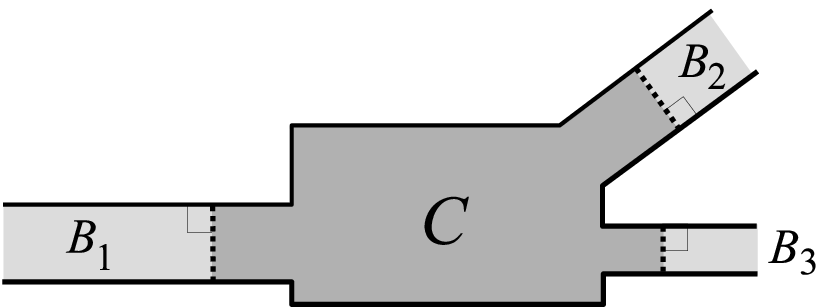}
\end{center}
\end{minipage}\\
(a) & (b)
\end{tabular}
\caption{ (a) An example of a star waveguide $\Lambda$ with three branches and a dark-shaded center.
(b) An alternative choice of a center. \label{fig1}}
\end{figure}

The study of  threshold resonances is motivated, in particular,
by the analysis of Dirichlet Laplacians in systems of thin tubes
collapsing onto a graph.
Namely, for a small $\varepsilon>0$,
consider a domain $\Omega_\varepsilon\subset \RR^d$ composed of
finitely many cylinders (''edges'')  $B_{j,\varepsilon}$ isometric to
$I_j\times (\varepsilon \omega)$ with $I_j:=(0,\ell_j)$, $\ell_j\in\RR_+$, $j\in\{1,\dots,J\}$,
connected to a ''network'' through some bounded Lipschitz
domains (''vertices'') $C_{k,\varepsilon}$, see Figure~\ref{fig-net}(a). (The case of nonidentical cross-sections
is also possible but the formulations become more complicated.)
We assume that the vertices $C_{k,\varepsilon}$
are isometric to $\varepsilon C_{k}$, where the domains
$C_k$ are $\varepsilon$-independent, $k\in\{1,\dots,K\}$,
and that the pieces are glued together in such a way that if one considers
a vertex $C_{k,\varepsilon}$
and extends the attached edges to infinity,
then one obtains a domain isometric to $\varepsilon \Lambda_k$,
where $\Lambda_k$ is an $\varepsilon$-independent
star waveguide whose center is $C_k$.

\begin{figure}
\centering
\begin{tabular}{cc}
\begin{minipage}[c]{65mm}
\begin{center}
\includegraphics[height=25mm]{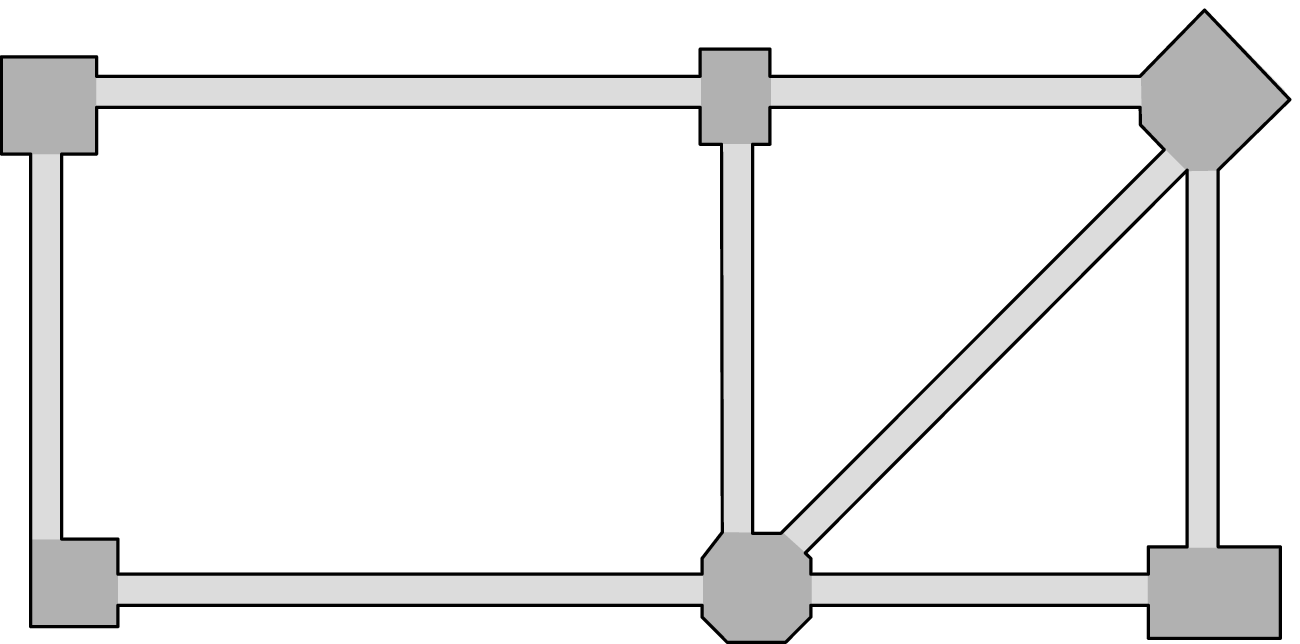}
\end{center}
\end{minipage}
&
\begin{minipage}[c]{65mm}
\begin{center}
\includegraphics[height=22mm]{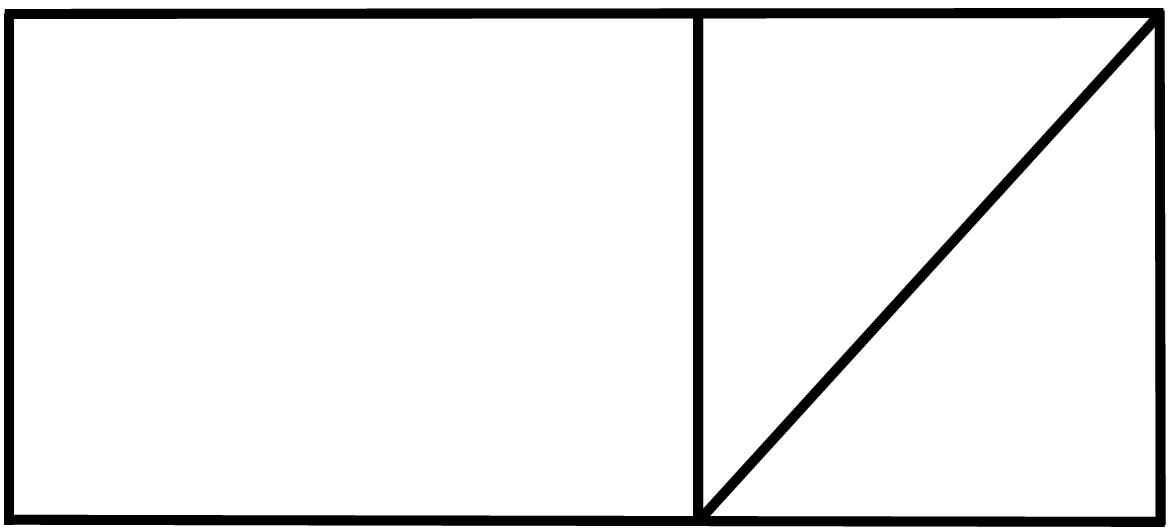}
\end{center}
\end{minipage}\\
(a) & (b)
\end{tabular}
\caption{ (a) An example of a domain $\Omega_\varepsilon$. The vertex parts are dark-shaded.
(b) The associated one-dimensional skeleton $X$. \label{fig-net}}
\end{figure}

Denote by $-\Delta^{\Omega_\varepsilon}_D$ the Dirichlet Laplacian
in $\Omega_\varepsilon$. In various applications, one is interested
in the asymptotics of its eigenvalues $\lambda_m(-\Delta^{\Omega_\varepsilon}_D)$
as $\varepsilon$ tends to $0$, see e.g. the monographs~\cite{ekbook,post-book}
and the reviews~\cite{grieser2,kuch}.
As the domain $\Omega_\varepsilon$ collapses onto it one-dimensional skeleton
$X$ composed from the intervals $I_j$ coupled at the vertices, see
Figure~\ref{fig-net}(b), one may expect that the eigenvalue asymptotics
might be determined
by some effective operator acting on the functions defined on $X$.
The results obtained by several authors, see e.g. \cite{grieser,mv}, can be 
informally summarized as follows. Consider the star waveguides $\Lambda_k$ associated
to each vertex as described above,
the associated Dirichlet Laplacians  $-\Delta^{\Lambda_k}_D$ and their discrete
eigenvalues $\lambda_j(-\Delta^\Lambda_D)$, $j\in \{1,\dots,N(\Lambda_k)\}$, $k\in\{1,\dots,K\}$,
then the bottom of the essential spectrum is exactly the first Dirichlet eigenvalue
$\nu$ of the cross-section $\omega$, and the following holds
as $\varepsilon$ tends to $0$: there exists $N\ge N(\Lambda_1)+\dots+N(\Lambda_K)$ such that
\begin{itemize}
\item for $m\in \{1,\dots,N\}$ there holds
\[
\lambda_m(-\Delta^{\Omega_\varepsilon}_D)=\dfrac{a_m}{\varepsilon^2}+\cO(e^{-c_m/\varepsilon})
\text{ with $a_m\in (0,\nu]$ and $c_m>0$},
\]
\item for any  $m\ge 1$ there holds
\[
\lambda_{N+m}(-\Delta^{\Omega_\varepsilon}_D)=
\dfrac{\nu}{\varepsilon^2} + \mu_m+\cO(\varepsilon),
\]
where $\mu_m$ are the eigenvalues of the self-adjoint operator
$L$ in $\bigoplus_{j=1}^J L^2(0,\ell_j)$ acting as $(f_j)\mapsto (-f''_j)$
with suitable self-adjoint boundary conditions determined by the scattering matrices
of $-\Delta^{\Lambda_k}_D$ at the energy~$\nu$.
\end{itemize}
The operator $L$, which is the so-called quantum graph laplacian on $X$~\cite{bk,post-book},
represents the sought ''effective operator'', and the associated boundary conditions
describe the way how the branches of the network interact through the vertices
in the limit $\varepsilon\to 0$.
An exact formulation, including 
the case of non-identical cross-sections, is presented in~\cite[Theorems 2 and 3]{grieser},
but is is quite complicated and needs a number of precise definitions, and finding the
boundary conditions for $L$ is a non-trivial transcendental problem,
but the whole construction admits an important particular case giving the following
simple result, see \cite[Section~8]{grieser} and \cite[Theorem 7]{mv}:
\begin{prop}\label{prop1}
Assume that none of $\Lambda_k$ admits a threshold resonance, then:
\begin{itemize}
\item Denote $N:= N(\Lambda_1)+\dots+N(\Lambda_K)$ and let $a_1,\dots,a_N$ be
the eigenvalues $\lambda_j(-\Delta^{\Lambda_k}_D)$, $j\in\{1,\dots,N(\Lambda_k)\}$, $k\in\{1,\dots,K\}$,
enumerated in the non-decreasing order,
then for $m\in\{1,\dots,N\}$ there holds, with some $c_m>0$,
\[
\lambda_m(-\Delta^{\Omega_\varepsilon}_D)=\dfrac{a_m}{\varepsilon^2}+\cO(e^{-c_m/\varepsilon})
\quad \text{as $\varepsilon$ tends to $0$},
\]
\item for any $m\ge 1$ there holds
\[
\lambda_{N+m}(-\Delta^{\Omega_\varepsilon}_D)=
\dfrac{\nu}{\varepsilon^2} + \mu_m+\cO(\varepsilon)
\quad \text{as $\varepsilon$ tends to $0$},
\]
where $\mu_m$ is the $m$th eigenvalue of $D_1\mathop{\oplus}\dots \oplus D_J$,
with $D_j$ being the Dirichlet Laplacian on $(0,\ell_j)$.
\end{itemize}
\end{prop}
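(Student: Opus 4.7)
The plan is to prove both assertions by combining the min-max principle with a compactness/no-resonance argument. Upper bounds would come from explicit trial functions built out of the unperturbed eigenfunctions, and matching lower bounds from Dirichlet--Neumann bracketing together with a contradiction argument that invokes the absence of threshold resonances to exclude spurious accumulation of eigenvalues near $\nu/\varepsilon^2$.

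For the upper bound of the first block, I would take, for each vertex $k$ and each $j\le N(\Lambda_k)$, the exponentially-decaying Dirichlet eigenfunction $\varphi^k_j$ of $-\Delta^{\Lambda_k}_D$ (its eigenvalue lies strictly below $\nu$). After $\varepsilon$-rescaling and multiplication by a smooth cutoff vanishing past distance $\ell/2$ inside each adjacent edge, these produce $N$ nearly-orthogonal trial functions in $H^1_0(\Omega_\varepsilon)$ with Rayleigh quotients $a_m/\varepsilon^2+\cO(e^{-c/\varepsilon})$, yielding the desired upper bound by min-max. For the second block I would take the $m$-th eigenfunction $v_m=(v_m^{(j)})$ of $D_1\oplus\dots\oplus D_J$ and, on each edge $B_{j,\varepsilon}\simeq I_j\times(\varepsilon\omega)$ with coordinates $(s,y)$, form $u^\varepsilon_m(s,y):=\varepsilon^{-(d-1)/2}v_m^{(j)}(s)\psi_0(y/\varepsilon)$ with $\psi_0$ the normalized first Dirichlet mode of $\omega$, extending by zero on the vertex parts; the vanishing of $v_m^{(j)}$ at the endpoints ensures $u^\varepsilon_m\in H^1_0(\Omega_\varepsilon)$, and a direct computation gives a Rayleigh quotient $\nu/\varepsilon^2+\mu_m+\cO(\varepsilon)$. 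Combining the two families in nondecreasing order and applying min-max yields both upper bounds.

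For the matching lower bound I would argue by contradiction. Suppose along a subsequence $\varepsilon_n\to 0$ there were at least $N+m$ eigenvalues of $-\Delta^{\Omega_{\varepsilon_n}}_D$ lying below $\nu/\varepsilon_n^2+\mu_m-\delta$ for some $\delta>0$. The rescaled eigenvalues $\varepsilon_n^2\lambda_j^n$ would all stay in $(-\infty,\nu+(\mu_m-\delta)\varepsilon_n^2]$. Dilating $\Omega_{\varepsilon_n}$ by $1/\varepsilon_n$ gives a domain with $\varepsilon_n$-independent cross-sections and branches of length $\ell_j/\varepsilon_n\to\infty$; I would then localize an orthonormalized family of eigenfunctions near each vertex $\Lambda_k$ and decompose the restriction to each branch into transverse Dirichlet modes. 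The non-ground modes, at these energies, are exponentially suppressed, while the ground-mode coefficient satisfies an almost-constant ODE whose bounded solutions converge. A weak $H^1_{\mathrm{loc}}$-limit of the extracted sequence would be either a discrete eigenfunction of some $-\Delta^{\Lambda_k}_D$ or a non-trivial bounded Dirichlet solution of $-\Delta u=\nu u$ on $\Lambda_k$, i.e., a threshold resonance. Since the no-resonance hypothesis excludes the latter, at most $N$ linearly independent limits arise, contradicting the assumption and establishing the lower bound.

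The main obstacle is controlling where the $L^2$ mass of the limit is concentrated: one has to prevent the eigenfunctions from entirely escaping to infinity along the branches without producing a nontrivial limit. This requires quantitative tightness for the ground-mode amplitudes on each branch, uniformly in $n$, and is where the precise exponential decay rate of the non-ground transverse modes and the almost-constant structure of the ground-mode ODE are essential. The assumption of no threshold resonance then removes exactly the finite-dimensional obstruction that would otherwise accommodate extra eigenvalues below $\nu/\varepsilon^2+\mu_m$, thereby closing the proof.
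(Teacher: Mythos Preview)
The paper does not contain a proof of Proposition~\ref{prop1}. It is quoted as a known result from the literature, with explicit attribution to \cite[Section~8]{grieser} and \cite[Theorem~7]{mv}; the paper's own contribution begins with Theorem~\ref{thm1}. So there is no ``paper's own proof'' to compare against.

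That said, your sketch is in the right spirit but is not a proof. The upper bounds via the two families of trial functions are standard and essentially correct. The lower bound, however, is where the real work lies, and your contradiction/compactness outline glosses over the decisive step. Extracting a weak $H^1_{\mathrm{loc}}$ limit on the blown-up domain and invoking the no-resonance hypothesis can at best show that at most $N$ eigenvalues stay strictly below $\nu/\varepsilon^2$; it does not by itself identify the limiting operator for the eigenvalues near $\nu/\varepsilon^2$ as the \emph{decoupled} Dirichlet Laplacian $D_1\oplus\dots\oplus D_J$ (as opposed to some other self-adjoint boundary condition at the vertices). In the references cited, this identification comes from analyzing the scattering matrix of each $-\Delta^{\Lambda_k}_D$ at the threshold energy $\nu$: absence of a threshold resonance forces the scattering matrix at $\nu$ to be $-\mathrm{Id}$, which is exactly what produces Dirichlet decoupling in the limit. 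Your argument never touches this mechanism, and the tightness issue you flag is not the only gap --- even granting tightness, you would still need to explain why the limiting one-dimensional problem carries Dirichlet rather than, say, Kirchhoff conditions.
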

In other words, in the absence of threshold resonances
the effective operator $L$ is decoupled. Numerous papers claimed that the assumptions of Proposition~\ref{prop1}
are generically satisfied, i.e. are true for ``almost any'' star waveguide,
which is supported by various analytical arguments, see e.g. \cite{dell,grieser,grieser2,mv}.
Nevertheless, there are only few results guaranteeing the non-existence of a threshold resonance
for an explicitly given configuration. In fact, the only explicitly formulated condition
we are aware of is the one appearing e.g. in \cite[Theorem 25]{grieser},
which applies to the above star waveguide $\Lambda$:
\begin{prop}\label{prop2}
Let $C$ be a center of $\Lambda$. Denote by $-\Delta^C_{DN}$ the Laplacian
in $L^2(C)$ with the Dirichlet boundary condition at $\partial C \mathop{\cap}\partial\Lambda$
and with the Neumann boundary condition at the remaining part of the boundary (e.g.
on the dash lines in Figure~\ref{fig1}).
If one has the strict inequality
\begin{equation}
  \label{eq-lnu}
\lambda_1(-\Delta^C_{DN})>\nu,
\end{equation}
then $\Lambda$ has no threshold resonance.
\end{prop}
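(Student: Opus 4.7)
My plan is to argue by contradiction: assuming a non-trivial bounded solution $u$ of \eqref{eq-bd} exists, I will use $u|_C$ as a test function in the variational characterisation of $\lambda_1(-\Delta^C_{DN})$ and derive a contradiction with \eqref{eq-lnu}. The two main ingredients are (i) separation of variables on each branch, which controls the sign of $u\,\partial_\nu u$ on the interior interface $\partial C\setminus\partial\Lambda$, and (ii) the min-max principle applied to $-\Delta^C_{DN}$, which turns \eqref{eq-lnu} into an energy inequality on $C$ that will be incompatible with (i).

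For (i), fix a branch $B_j\simeq \RR_+\times \omega_j$ with longitudinal coordinate $x\ge 0$ and transversal coordinate $y\in\omega_j$, and let $(\varphi_{j,k})_{k\ge 1}$ be an $L^2$-orthonormal basis of Dirichlet eigenfunctions of $-\Delta$ on $\omega_j$ with eigenvalues $\nu_{j,k}$; in particular $\nu_{j,1}=\nu_j\ge\nu$. Expanding $u(x,y)=\sum_k c_{j,k}(x)\varphi_{j,k}(y)$ and using $-\Delta u=\nu u$ yields $c''_{j,k}=(\nu_{j,k}-\nu)\,c_{j,k}$, whose only bounded solutions are $c_{j,k}(x)=A_{j,k}\,e^{-\sqrt{\nu_{j,k}-\nu}\,x}$ when $\nu_{j,k}>\nu$ and the constants $c_{j,k}(x)\equiv A_{j,k}$ when $\nu_{j,k}=\nu$. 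In both cases $c_{j,k}(0)\,c'_{j,k}(0)\le 0$, hence by Parseval's identity
\[
\int_{\omega_j} u(0,y)\,\partial_x u(0,y)\,\dd y=\sum_k c_{j,k}(0)\,c'_{j,k}(0)\le 0.
\]

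For (ii), the restriction $u|_C$ belongs to $H^1(C)$ by boundedness and elliptic regularity and vanishes on $\partial C\cap \partial\Lambda$, so the min-max principle gives $\int_C |\nabla u|^2\ge \lambda_1(-\Delta^C_{DN})\int_C |u|^2$. On the other hand, Green's formula combined with $-\Delta u=\nu u$ and the vanishing of $u$ on $\partial C\cap\partial\Lambda$ yields
\[
\int_C |\nabla u|^2-\nu\int_C |u|^2=\sum_{j=1}^n \int_{\omega_j} u(0,y)\,\partial_x u(0,y)\,\dd y\le 0,
\]
where the last inequality uses (i) and the fact that the outward normal of $C$ along $\partial C\cap \overline{B_j}$ points in the $+x$-direction of $B_j$. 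Combining the two bounds forces $\bigl(\lambda_1(-\Delta^C_{DN})-\nu\bigr)\int_C |u|^2\le 0$, which by \eqref{eq-lnu} gives $u|_C=0$. Continuity of $u$ then yields $c_{j,k}(0)=0$ for every $j,k$, and the explicit form of $c_{j,k}$ implies $u\equiv 0$ on every branch as well, contradicting non-triviality.

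The main potential obstacle is the rigorous justification of Green's formula and of the termwise modal identities on the Lipschitz domain $C$. This is handled by standard interior elliptic regularity, which gives smoothness of $u$ near the interior cross-sections $\partial C\cap \overline{B_j}$, together with a standard approximation argument near $\partial C\cap \partial\Lambda$; the exponential decay of the higher modes on each branch then makes the termwise manipulations trivially convergent.
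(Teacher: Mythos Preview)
Your argument is correct and proceeds along a direct variational route that differs from the paper's treatment. In the paper, Proposition~\ref{prop2} is quoted as a known result and then recovered as the special case $N(\Lambda)=0$ of Theorem~\ref{thm1}, whose proof goes by perturbation: Lemma~\ref{lem5} shows that a threshold resonance forces $-\Delta^\Lambda_D-\gamma\,1_\Omega$ to have at least $N(\Lambda)+1$ eigenvalues below $\nu$ for every $\gamma>0$ and every bounded open $\Omega\subset\Lambda$, while Lemma~\ref{lem8} shows that under the eigenvalue hypothesis one may take $\Omega=C$ and $\gamma$ small so that exactly $N(\Lambda)$ eigenvalues appear, giving a contradiction. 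You instead feed $u|_C$ directly into the Rayleigh quotient for $\lambda_1(-\Delta^C_{DN})$ and control the boundary term in Green's formula through the sign of the Dirichlet-to-Neumann flux at the interface, obtained from the modal expansion on each branch.

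What each approach buys: your argument is shorter and conceptually transparent for Proposition~\ref{prop2}, and is essentially the ``classical'' proof behind the references the paper cites for that statement. It does not, however, extend in an obvious way to $N(\Lambda)\ge 1$: to reach $\lambda_{N(\Lambda)+1}(-\Delta^C_{DN})$ via min-max one would need the Rayleigh quotient on $C$ to be at most $\nu$ on the whole span of $u_0|_C,\dots,u_{N(\Lambda)}|_C$, but for nontrivial linear combinations the interface-flux term acquires cross terms of indefinite sign (the branch decay rates $\sqrt{\nu_{j,k}-\lambda_m}$ vary with $m$). The paper's perturbative scheme was devised precisely to bypass this obstruction and establish Theorem~\ref{thm1} for arbitrary $N(\Lambda)$.
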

Recall that, by the min-max principle, for any $j\in \{1,\dots,N(\Lambda)\}$ there holds
\begin{equation}
      \label{eq-lnu2}
\lambda_j(-\Delta^\Lambda_D)\ge \lambda_j(-\Delta^C_{DN}).
\end{equation}
Therefore, in the situation of Proposition~\ref{prop2}
the operator $-\Delta^\Lambda_D$ has no discrete
eigenvalues, i.e. $N(\Lambda)=0$, and its spectrum is $[\nu,+\infty)$.
In particular, if one has a network $\Omega_\varepsilon$ of the above type
and such that the star waveguide associated with each vertex
satisfies the assumptions of Proposition~\ref{prop2}, then the result of Proposition~\ref{prop1}
takes a simpler form, as one simply has $N=0$.
One should remark that this particular case of Proposition~\ref{prop1}
was initially proved in \cite{post-dir} in a direct way, without
explicit link to the threshold resonances. The condition \eqref{eq-lnu}
is usually interpreted as the smallness of the center of the star waveguide
with respect to the thickness of the branches. This situation
is quite special, and it is generally expected
that deformed waveguides of constant width
have discrete eigenvalues~\cite{bulla,es,gold,bind,naz12}.

Recently, some specific star waveguide configurations
in two and three dimensions were studied in \cite{bakh,naz-t,naz-hex},
and the absence of threshold resonances was shown.
One should remark that, in all the cases considered, the
condition \eqref{eq-lnu} is not satisfied,
and a non-empty discrete spectrum is present. The aim
of the present paper is to state explicitly the main
condition used in the constructions of~\cite{bakh,naz-t,naz-hex}
and then to show how it can be applied to the analysis
of more general geometric configurations. 
Our main contribution is as follows:
\begin{theorem}\label{thm1}
Let $C$ be a center of $\Lambda$ and $-\Delta^C_{DN}$ be as in Proposition~\ref{prop2}.
If
\begin{equation}
   \label{eq-lnun}
 \lambda_{N(\Lambda)+1}(-\Delta^C_{DN})>\nu,
\end{equation}
then $\Lambda$ has no threshold resonance.
\end{theorem}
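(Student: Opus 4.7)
The plan is to argue by contradiction via the min-max principle applied to $-\Delta^C_{DN}$. Suppose $\Lambda$ admits a threshold resonance $u$, and let $u_1,\dots,u_{N(\Lambda)}\in H^1_0(\Lambda)$ be an $L^2$-orthonormal family of eigenfunctions of $-\Delta^\Lambda_D$ with eigenvalues $\lambda_1,\dots,\lambda_{N(\Lambda)}<\nu$. As trial space inside the form domain of $-\Delta^C_{DN}$ I would take
\[
V:=\Span\bigl\{u|_C,\,u_1|_C,\dots,u_{N(\Lambda)}|_C\bigr\}.
\]
Linear independence is a short unique-continuation argument: if $c_0 u|_C+\sum_j c_j u_j|_C=0$, subtracting $\nu$ times this relation from its image under $-\Delta$ yields $\sum_j c_j(\lambda_j-\nu)u_j=0$ on $C$, whence a Vandermonde argument (in the distinct eigenvalues) together with unique continuation for each eigenvalue $\lambda_j$ forces all $c_j=0$; the residual relation $c_0 u=0$ on $C$ then gives $c_0=0$ by unique continuation for $u$. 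So $\dim V=N(\Lambda)+1$, and it is enough to show that
\[
\int_C|\nabla v|^2\le \nu\int_C v^2\quad\text{for every }v\in V,
\]
since then the min-max principle yields $\lambda_{N(\Lambda)+1}(-\Delta^C_{DN})\le \nu$, contradicting \eqref{eq-lnun}.

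To establish this inequality I truncate $\Lambda$ at depth $L$ in each branch, setting $\Lambda_L:=C\cup\Omega_L$ where $\Omega_L$ is the union of the pieces $[0,L]\times\omega_k$. On $\Omega_L$, the lateral Dirichlet condition and Poincar\'e on each cross-section, together with $\nu_1^{(k)}\ge\nu$, give the slice-wise bound
\[
\int_{\Omega_L}|\nabla v|^2\ge \nu\int_{\Omega_L} v^2.
\]
On $\Lambda_L$, writing $v=c_0 u+\sum_j c_j u_j$ and using $-\Delta v-\nu v=\sum_j c_j(\lambda_j-\nu)u_j$, an integration by parts (with the $\partial\Lambda$-contributions dropping out because $v$ vanishes on $\partial\Lambda$) gives
\[
\int_{\Lambda_L}(|\nabla v|^2-\nu v^2)=\sum_{j=1}^{N(\Lambda)} c_j(\lambda_j-\nu)\int_{\Lambda_L}u_j v+\int_{\mathrm{caps}} v\,\partial_n v.
\]
Subtracting the $\Omega_L$-inequality from this identity and letting $L\to\infty$ should yield $\int_C(|\nabla v|^2-\nu v^2)\le \sum_j c_j^2(\lambda_j-\nu)\le 0$, which is precisely what is needed.

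The main technical issue is the passage to the limit $L\to\infty$ in the right-hand side of the last display. For the cap integrals I would use the standard Fourier decomposition of $v$ in each half-cylinder $B_k\cong\RR_+\times\omega_k$: every mode of each $u_j$ decays exponentially in $t$, while every mode of $u$ either decays exponentially or, in the modes whose cross-sectional eigenvalue equals $\nu$, is asymptotically constant with exponentially vanishing derivative; in either case the product $v\,\partial_n v$ on the cap decays exponentially in $L$. The convergence $\int_{\Lambda_L}u_j v\to c_j$ then reduces to the orthogonality relation $\int_\Lambda u\,u_j=0$, which is the only point requiring care because $u\notin L^2(\Lambda)$. One handles it by noting that $u\,u_j$ and $\nabla u\cdot\nabla u_j$ are nevertheless integrable thanks to the exponential decay of $u_j$, and by applying integration by parts to $\int_{\Lambda_L}\nabla u\cdot\nabla u_j$ in two complementary ways, which produce respectively $\nu\int_{\Lambda_L} u\,u_j$ and $\lambda_j\int_{\Lambda_L} u\,u_j$ modulo cap terms that vanish as $L\to\infty$; since $\lambda_j\neq\nu$, the integral must vanish. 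This orthogonality is the single nontrivial ingredient; everything else is bookkeeping.
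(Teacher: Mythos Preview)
Your argument is correct but follows a genuinely different route from the paper's. The paper introduces an auxiliary perturbation $-\Delta^\Lambda_D-\gamma 1_C$ and proves two separate facts: (i) if a threshold resonance exists, then for \emph{every} $\gamma>0$ this operator has at least $N(\Lambda)+1$ eigenvalues below $\nu$ (via a cut-off of $u$ combined with the $u_j$ as test functions for the quadratic form on $\Lambda$); (ii) under the hypothesis \eqref{eq-lnun}, Dirichlet--Neumann bracketing forces this operator to have exactly $N(\Lambda)$ eigenvalues below $\nu$ for small $\gamma$. The contradiction is then between these two counts. You instead restrict $u,u_1,\dots,u_{N(\Lambda)}$ to $C$ and feed them directly into the min-max characterization of $\lambda_{N(\Lambda)+1}(-\Delta^C_{DN})$, obtaining $\lambda_{N(\Lambda)+1}(-\Delta^C_{DN})\le\nu$ without any auxiliary operator.

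Your route is more direct and gives a slightly sharper conclusion (the non-strict inequality rather than a limiting statement), but it pays for this by needing the explicit Fourier decomposition of $u$ along the branches to control the cap terms and to prove the orthogonality $\int_\Lambda u\,u_j=0$; the paper's radial cut-off avoids this by making everything compactly supported from the start, at the price of the extra $\cO(R^{-1/2})$ bookkeeping. Both approaches ultimately rest on the same linear-independence lemma (your Vandermonde/unique-continuation step is exactly the paper's Lemma~\ref{lem1}), and both exploit the slice-wise Poincar\'e inequality on the branches, which you use directly and the paper uses implicitly through the spectrum of $A_j$ in Lemma~\ref{lem8}. One small point worth stating explicitly in your write-up: the cancellation that makes $\int_{\Lambda_L}(|\nabla v|^2-\nu v^2)$ converge as $L\to\infty$ despite $u\notin L^2$ comes from $\int_{\omega_k}(|\nabla\phi_1^{(k)}|^2-\nu\phi_1^{(k)2})=0$ in the threshold mode, which is what ensures the limit equals $\sum_j c_j^2(\lambda_j-\nu)$ rather than diverging.
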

As noted above, Proposition~\ref{prop2} is a special case of Theorem~\ref{thm1}
with $N(\Lambda)=0$.
For further references, let us state explicitly another obvious but important corollary
corresponding to $N(\Lambda)=1$, which is essentially the condition used in \cite{bakh,naz-t,naz-hex}:
\begin{corol}\label{cor1}
If the discrete spectrum $-\Delta^\Lambda_D$ is non-empty
and for some center $C$ one has
 $\lambda_{2}(-\Delta^C_{DN})>\nu$, then $-\Delta^\Lambda_D$ has a single discrete eigenvalue
and no threshold resonance.
\end{corol}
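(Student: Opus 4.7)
The plan is to derive Corollary~\ref{cor1} directly from Theorem~\ref{thm1} by first pinning down $N(\Lambda)$ using the min-max inequality~\eqref{eq-lnu2}, without re-entering the proof of the theorem.

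First, I will use~\eqref{eq-lnu2} to show that $N(\Lambda)\le 1$. Suppose, towards a contradiction, that $N(\Lambda)\ge 2$. Then $\lambda_2(-\Delta^\Lambda_D)$ is a discrete eigenvalue of $-\Delta^\Lambda_D$ and therefore lies strictly below the bottom $\nu$ of the essential spectrum. Applying~\eqref{eq-lnu2} with $j=2$ yields
\[
\lambda_2(-\Delta^C_{DN})\le \lambda_2(-\Delta^\Lambda_D)<\nu,
\]
contradicting the hypothesis $\lambda_2(-\Delta^C_{DN})>\nu$. Hence $N(\Lambda)\le 1$, and since the discrete spectrum of $-\Delta^\Lambda_D$ is assumed non-empty, I conclude $N(\Lambda)=1$.

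Second, with $N(\Lambda)=1$ in hand, the hypothesis $\lambda_2(-\Delta^C_{DN})>\nu$ reads exactly as $\lambda_{N(\Lambda)+1}(-\Delta^C_{DN})>\nu$, which is precisely assumption~\eqref{eq-lnun} of Theorem~\ref{thm1}. Invoking that theorem then gives the absence of a threshold resonance for $\Lambda$, which completes both claims of the corollary.

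There is no real obstacle to this argument: the only point worth underlining is that the min-max inequality~\eqref{eq-lnu2} plays a dual role here — it not only compares individual eigenvalues but, via the contrapositive, converts the spectral gap assumption $\lambda_2(-\Delta^C_{DN})>\nu$ into an \emph{a priori} upper bound on the cardinality of the discrete spectrum of $-\Delta^\Lambda_D$. Once that upper bound is combined with the non-emptiness assumption to fix $N(\Lambda)=1$, Theorem~\ref{thm1} does the remaining work.
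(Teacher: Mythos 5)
Your proof is correct and follows exactly the route the paper intends: use inequality~\eqref{eq-lnu2} to rule out $N(\Lambda)\ge 2$ under the hypothesis $\lambda_2(-\Delta^C_{DN})>\nu$, combine with non-emptiness of the discrete spectrum to get $N(\Lambda)=1$, and then apply Theorem~\ref{thm1}. The paper treats this as an immediate consequence and gives no separate proof, so your write-up simply makes the intended argument explicit.
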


The proof of Theorem~\ref{thm1} is given in the following section,
and it is quite elementary. We show first,
using an explicit construction of test functions,
that the presence of a threshold resonance gives
rise to additional eigenvalues if one perturbs the Dirichlet
Laplacian in $\Lambda$ by a negative potential.
Then we show that such a behavior contradicts the assumption~\eqref{eq-lnun}.
In fact, a similar scheme was used in \cite{naz-t,naz-hex}
but with a different type of perturbation. Our choice of
a potential perturbation allows for a more straightforward
use of the min-max principle, and the resulting proof
appears to be less technical.

In Section~\ref{sexam} we present several explicit examples in two
and three dimensions in which the assumptions of Theorem~\ref{thm1}
can be verified. Remark that the example given in subsection~\ref{ssmany}
is not covered by Corollary~\ref{cor1}.

We remark at last that the Dirichlet boundary condition at the boundary
of $\Lambda$ is only taken as an example,
it can be replaced by some others such as Robin or mixed ones.
Note that for the Neumann boundary condition one always has $\nu=0$, and
there is a threshold resonance corresponding to the constant solutions of $-\Delta u=0$.
In this case one always has $N(\Lambda)=0$,
the operator $-\Delta^C_{DN}$ should be replaced by the Neumann Laplacian
on $C$, whose first eigenvalue is $0=\nu$, and Eq.~\eqref{eq-lnun}
is never satisfied.

\section{Proof of Theorem~\ref{thm1}}\label{s1}

The proof is by assuming the opposite. We first show  (Lemma~\ref{lem5})
that if $\Lambda$ has a threshold
resonance, then any perturbation of some class produces an additional eigenvalue,
which is done by constructing a  family of suitable test functions.
On the other hand, in Lemma~\ref{lem8} we show that under the assumption
\eqref{eq-lnun} one can construct a perturbation
of this class producing no new eigenvalues, which gives the result.

Recall that for a set $A$ we denote by $1_A$ its indicator function, which is defined
by $1_A(x)=1$ for $x\in A$ and $1_A(x)=0$ otherwise.

\subsection{Perturbations producing additional eigenvalues}\label{ss21}

This subsection is devoted to the proof of the following assertion.
 
\begin{lemma}\label{lem5}
Assume that $\Lambda$ has a threshold resonance. Let $\Omega\subset\Lambda$
be a non-empty bounded open set, then for any $\gamma>0$
the perturbed operator $-\Delta^\Lambda_D-\gamma 1_\Omega$ has at least
$N(\Lambda)+1$ eigenvalues in $(-\infty,\nu)$.
\end{lemma}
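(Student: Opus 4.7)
The plan is, via the min-max principle, to exhibit for sufficiently large $R$ an $(N+1)$-dimensional test subspace $V_R\subset H^1_0(\Lambda)$ on which the quadratic form
\[
q(v) := \int_\Lambda|\nabla v|^2 - \gamma\int_\Omega v^2 - \nu\int_\Lambda v^2
\]
is negative definite, which forces $-\Delta^\Lambda_D-\gamma 1_\Omega$ to have at least $N+1$ eigenvalues strictly below $\nu$, where $N:=N(\Lambda)$. Let $u$ denote the threshold resonance and $\psi_1,\dots,\psi_N$ the $L^2$-orthonormal eigenfunctions of $-\Delta^\Lambda_D$ with eigenvalues $\lambda_j<\nu$. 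Fix a smooth cutoff $\chi_R$ that equals $1$ on $B_R$, vanishes outside $B_{2R}$, and satisfies $|\nabla\chi_R|\le C/R$; I would set $v_R:=\chi_R u$ and $w_R := v_R - \sum_{j=1}^N(v_R,\psi_j)_{L^2}\,\psi_j$ (the $L^2$-orthogonal projection of $v_R$ off $\Span(\psi_j)$), and take $V_R := \Span(\psi_1,\dots,\psi_N,w_R)$.

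Two elementary ingredients drive the estimate. First, using $-\Delta u=\nu u$ and integration by parts,
\[
\int_\Lambda|\nabla(\chi_R u)|^2 - \nu\int_\Lambda(\chi_R u)^2 \;=\; \int_\Lambda|\nabla\chi_R|^2\,u^2 \;=\; O(1/R),
\]
since $u$ is bounded and $|\Lambda\cap B_{2R}|=O(R)$ for a star waveguide. Second, Green's formula applied on $\Lambda\cap B_R$ to $(\nu-\lambda_j)\int u\psi_j$, combined with the exponential decay of $\psi_j$ (a consequence of $\lambda_j$ lying below the essential spectrum), yields $(u,\psi_j)_{L^2(\Lambda)}=0$. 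Consequently $(v_R,\psi_j)\to 0$, $w_R\to u$ in $L^2(\Omega)$, and $\int_\Lambda|\nabla w_R|^2 - \nu\|w_R\|^2 \to 0$ as $R\to\infty$.

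In the $L^2$-orthogonal basis $\{\psi_1,\dots,\psi_N,w_R\}$ the form $q$ has matrix
\[
Q_R = \begin{pmatrix} A & b_R \\ b_R^T & c_R \end{pmatrix}
\]
with $A_{jk}=(\lambda_j-\nu)\delta_{jk}-\gamma\int_\Omega\psi_j\psi_k$, $b_{R,j}=-\gamma\int_\Omega\psi_j w_R$, and $c_R=q(w_R)$. The block $A$ is negative definite as the sum of the strictly negative $\diag(\lambda_j-\nu)$ and the negative-semidefinite Gram matrix $-\gamma(\int_\Omega\psi_j\psi_k)_{jk}$. By the Schur complement, $Q_R$ has $N+1$ negative eigenvalues iff $c_R - b_R^T A^{-1} b_R < 0$; using the convergences above, this quantity tends as $R\to\infty$ to $-\gamma\bigl(\int_\Omega u^2 - \gamma\, h^T M^{-1} h\bigr)$, where $h_j:=\int_\Omega\psi_j u$ and $M:=-A=\gamma(\int_\Omega\psi_j\psi_k)+\diag(\nu-\lambda_j)$.

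The main obstacle is this last strict inequality $\int_\Omega u^2 > \gamma\, h^T M^{-1} h$. My intended proof is by completing the square in $x\in\RR^N$, which gives the identity
\[
\int_\Omega u^2 - \gamma\,h^T M^{-1} h \;=\; \min_{x\in\RR^N}\left\{\int_\Omega\Bigl(u-\sum_{k=1}^N x_k\psi_k\Bigr)^2 + \frac{1}{\gamma}\sum_{k=1}^N(\nu-\lambda_k)x_k^2\right\}.
\]
Each summand on the right is non-negative, and simultaneous vanishing would force $x=0$ (since $\nu-\lambda_k>0$) together with $u\equiv 0$ on $\Omega$, contradicting the real-analyticity and non-triviality of $u$. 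Hence the minimum is strictly positive, so for all sufficiently large $R$ the matrix $Q_R$ has signature $(0,N+1)$; the min-max principle then supplies the required $N+1$ eigenvalues of $-\Delta^\Lambda_D-\gamma 1_\Omega$ below $\nu$.
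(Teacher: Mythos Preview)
Your argument is correct and arrives at the same conclusion, but the route differs from the paper's in a couple of instructive ways. The paper does \emph{not} orthogonalise: it keeps the raw family $v_0=\varphi u_0,\,v_j=u_j$ and writes the form as $\langle\xi,(A-\gamma B)\xi\rangle$, where $B=\big(\int_\Omega u_j u_k\big)_{j,k=0}^{N}$ is the full Gram matrix on $\Omega$. Its key lemma (Lemma~\ref{lem1}) shows that $u_0,\dots,u_N$ are linearly independent on \emph{any} open subset, via unique continuation applied eigenspace by eigenspace, so $B$ is uniformly positive definite; the off-diagonal entries $a_{j0}$ are then bounded by $O(R^{-1/2})$ using only Cauchy--Schwarz and $u_j\in H^1$, with no appeal to decay of the eigenfunctions. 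Your orthogonalisation of $\chi_R u$ against the $\psi_j$ kills those energy cross-terms exactly, but the price is the extra input $(u,\psi_j)_{L^2(\Lambda)}=0$, for which you invoke Green's identity together with the exponential decay of $\psi_j$---a standard fact, but one the paper manages to avoid. Your endgame (Schur complement plus the completing-the-square identity, reducing the strict inequality to $u\not\equiv 0$ on $\Omega$) and the paper's Gram-matrix positivity both ultimately rest on unique continuation; the paper uses it for the whole family $u_0,\dots,u_N$, whereas you only need it for $u$ alone. In short: the paper's version is slightly more self-contained (no eigenfunction decay needed), while yours enjoys a cleaner block structure and a pleasant variational identity at the finish. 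One small point you leave implicit: $V_R$ is genuinely $(N{+}1)$-dimensional because $w_R\to u\neq 0$ in $L^2(\Omega)$, so $w_R\neq 0$ for large $R$.
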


The perturbation is compactly supported and does not change the essential spectrum,
and by the min-max principle it is sufficient to find a $\big(N(\Lambda)+1\big)$-dimensional
subspace $V\subset H^1_0(\Lambda)$ with
\begin{equation}
      \label{eq-mm1}
\sup_{v\in V\setminus\{0\}}
\dfrac{\|\nabla v\|^2_{L^2(\Lambda)} - \gamma \|v\|^2_{L^2(\Omega)}}{\|v\|^2_{L^2(\Lambda)}}<\nu.
\end{equation}
By assumption, there exists a non-zero bounded solution $u_0$ of \eqref{eq-bd}. Denote for brevity $N:=N(\Lambda)$ and $\lambda_j:=\lambda_j(-\Delta^\Lambda_D)$,
$j\in\{1,\dots,N\}$, and choose an associated orthonormal family
of eigenfunctions $u_j$ of $-\Delta^\Lambda_D$,
\begin{equation}
   \label{eq-uu}
\langle u_j,u_k\rangle_{L^2(\Lambda)}=\delta_{jk},
\quad
-\Delta u_j=\lambda_j u_j, \quad j\in\{1,\dots, N\}.
\end{equation}
Note that the functions $u_0, \dots,u_N$ are smooth in $\Lambda$
due to the elliptic regularity. Let us emphasize another simple
property:
\begin{lemma}\label{lem1}
The functions $u_0,\dots,u_N$ are linearly independent
on any non-empty open subset of~$\Lambda$.
\end{lemma}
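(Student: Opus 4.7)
The plan is to argue by contradiction: suppose there exist constants $c_0,\dots,c_N$, not all zero, such that $v:=\sum_{j=0}^N c_j u_j$ vanishes on some non-empty open subset $U\subset\Lambda$; I aim to deduce $v\equiv 0$ on all of $\Lambda$, after which the orthonormality of $u_1,\dots,u_N$ together with the non-triviality of $u_0$ and the strict inequalities $\lambda_j<\nu$ for $j\ge 1$ will force each $c_j$ to vanish.

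To implement this, I first separate the contributions of the different spectral parameters. Set $\lambda_0:=\nu$ and let $\mu_1<\dots<\mu_m$ be the distinct values occurring in $\{\lambda_0,\lambda_1,\dots,\lambda_N\}$. Define
\[
v_l:=\sum_{j:\,\lambda_j=\mu_l} c_j u_j,\qquad l=1,\dots,m,
\]
so that each $v_l$ is smooth on $\Lambda$ and satisfies $-\Delta v_l=\mu_l v_l$. Applying $(-\Delta)^k$ to the identity $v=\sum_l v_l=0$ on $U$ for $k=0,1,\dots,m-1$ yields the linear system
\[
\sum_{l=1}^m \mu_l^k\, v_l = 0\text{ on }U,
\]
whose coefficient matrix is Vandermonde and hence invertible; thus each $v_l$ vanishes on $U$. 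Since $-\Delta-\mu_l$ has constant coefficients, $v_l$ is real-analytic in $\Lambda$ (analytic hypoellipticity), and the connectedness of $\Lambda$ together with the vanishing of $v_l$ on the non-empty open set $U$ forces $v_l\equiv 0$ throughout $\Lambda$.

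It remains to extract the $c_j$'s. For any $l$ with $\mu_l<\nu$, the sum defining $v_l$ contains only discrete eigenfunctions from the orthonormal family $\{u_1,\dots,u_N\}$, so the identity $v_l\equiv 0$ and the linear independence of this subfamily imply that all corresponding $c_j$ vanish; for the unique index $l$ with $\mu_l=\nu$, the sum reduces to $c_0 u_0$ (because $\lambda_j<\nu$ for $j\ge 1$), and since $u_0\neq 0$ we get $c_0=0$. This contradicts the assumption that not all $c_j$ vanish. The only non-trivial ingredient in the argument is the real-analyticity of solutions to $-\Delta u=\lambda u$, which is standard; everything else is linear algebra plus the orthonormality already built into \eqref{eq-uu}. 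I therefore expect no real obstacle and anticipate that the author's proof follows essentially the same lines.
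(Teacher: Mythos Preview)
Your proof is correct and follows essentially the same line as the paper's: assume a nontrivial linear relation on an open set, separate the contributions by distinct eigenvalue, propagate the local vanishing to all of $\Lambda$, and then use orthonormality together with $u_0\not\equiv 0$ to conclude. The only cosmetic differences are that the paper isolates each eigenvalue component by applying the differential operator $\prod_{\mu\neq\lambda}(-\Delta-\mu)$ rather than inverting a Vandermonde system, and invokes the unique continuation principle rather than analytic hypoellipticity for the local-to-global step.
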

\begin{proof}
Assume the opposite, i.e. that there exist a non-empty open subset $U\subset\Lambda$
and $\xi=(\xi_0,\dots,\xi_N)\in\RR^{N+1}\setminus\{0\}$
such that
\begin{equation}
     \label{eq-xu}
\sum\nolimits_{j=0}^N \xi_j u_j=0 \text{ in } U.
\end{equation}
Denote $\lambda_0:=\nu$ and   $\Sigma:=\{\lambda_0,\dots, \lambda_N\big\}$,
pick any $\lambda\in \Sigma$ and apply successively the differential
expressions $(-\Delta-\mu)$ with all $\mu\in\Sigma\setminus\{\lambda\}$
to Eq.~\eqref{eq-xu}. We arrive at
\[
\Big(\prod_{\mu\in \Sigma\setminus\{\lambda\}}(\lambda-\mu) \Big)v_\lambda=0 \text{ in } U,
\quad v_\lambda:=\sum\nolimits_{j:\, \lambda_j=\lambda} \xi_j u_j,
\]
and the function $v_\lambda$ must vanish in $U$.
On the other hand, it satisfies $-\Delta v_\lambda=\lambda v_\lambda$ in $\Lambda$,
hence, $v_\lambda\equiv 0$ in $\Lambda$ due to the unique continuation principle.
In particular, for $\lambda=\lambda_0=\nu$ we obtain $\xi_0 u_0=0$ in $\Lambda$, and
$\xi_0=0$ as $u_0$ is not identically zero.
For $\lambda=\lambda_k$ with $k\ne 0$ we arrive at
\[
\sum\nolimits_{j:\,\lambda_j=\lambda_k} \xi_j u_j=0 \text{ in } \Lambda,
\]
implying $\xi_j=0$ for all $j$ with $\lambda_j=\lambda_k$, as the family $(u_1,\dots, u_n)$
is orthonormal.
Therefore, $\xi_j=0$ for all $j\in\{0,\dots, N\}$, which is in contradiction
with $\xi\ne 0$.
\end{proof}

Let us pick a $C^\infty$ cut-off function
$ \chi:\RR\to[0,1]$ with $\chi(r)=1$  for $r\le 1$
and $\chi(r)=0$ for $r\ge 2$, and define $\varphi:\Lambda\to \RR$ by
$\varphi(x)= \chi\big(|x|/R\big)$ with some $R>R_0$,
where $R_0$ is sufficiently large to have
$\varphi=1$ on $\Omega$.
Now set
\[
v_0:=\varphi u_0,
\quad
v_j=u_j, \quad j\in\{1,\dots,N\}.
\]
\begin{lemma}\label{lem2}
The functions $v_0,\dots,v_N$ are linearly independent in $L^2(\Lambda)$
for any $R>R_0$.
\end{lemma}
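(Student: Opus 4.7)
The plan is to argue by contradiction: suppose a non-trivial linear relation
\[
\alpha_0 \varphi u_0 + \sum_{j=1}^N \alpha_j u_j = 0 \quad \text{in } L^2(\Lambda)
\]
holds, and derive $\alpha_0=\dots=\alpha_N=0$ in two stages, separating the tail region from the bulk of $\Lambda$ using the cut-off $\varphi$.

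First, I would exploit the fact that $\Lambda$ is unbounded (it contains at least one half-infinite cylinder), so the set $U:=\{x\in\Lambda:|x|>2R\}$ is non-empty and open. On $U$ we have $\varphi\equiv 0$, hence the relation reduces to $\sum_{j=1}^N \alpha_j u_j=0$ on $U$. Applying Lemma~\ref{lem1} with the coefficient of $u_0$ set to zero (i.e., with $\xi_0=0$, $\xi_j=\alpha_j$ for $j\geq 1$) immediately yields $\alpha_1=\dots=\alpha_N=0$.

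With those coefficients eliminated, the relation becomes $\alpha_0 \varphi u_0 = 0$ in $L^2(\Lambda)$. If $\alpha_0\neq 0$ then $\varphi u_0=0$ almost everywhere; but on the non-empty open set $V:=\{x\in\Lambda: |x|<R\}$ (non-empty for $R>R_0$, since the center is contained there by choice of $R_0$) we have $\varphi\equiv 1$, so $u_0\equiv 0$ on $V$. Since $u_0$ is a solution of $-\Delta u_0=\nu u_0$ in $\Lambda$, the unique continuation principle (used already in the proof of Lemma~\ref{lem1}) forces $u_0\equiv 0$ on all of $\Lambda$, contradicting the fact that $u_0$ is the non-trivial threshold resonance. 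Hence $\alpha_0=0$, completing the argument.

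The argument is essentially a two-step decoupling: the tail region kills the contributions of the discrete eigenfunctions via Lemma~\ref{lem1}, and the bulk then kills the contribution of $u_0$ via unique continuation. The only mildly subtle point is verifying that $U$ is genuinely non-empty for every $R>R_0$ and that the set $V$ meets $\Lambda$ non-trivially; both follow at once from the geometric structure of a star waveguide (unbounded branches, non-empty center enclosed by $\{|x|<R_0\}$), so no real obstacle arises.
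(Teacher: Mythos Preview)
Your argument is correct, but it takes a more circuitous path than the paper's. The paper simply observes that since $\varphi\equiv 1$ on the non-empty open set $\Omega$ for $R>R_0$, one has $v_j=u_j$ on $\Omega$ for every $j\in\{0,\dots,N\}$; a single application of Lemma~\ref{lem1} on $\Omega$ then yields linear independence immediately. Your two-step decoupling---using the tail $\{|x|>2R\}$ to eliminate $\alpha_1,\dots,\alpha_N$ via Lemma~\ref{lem1}, then the bulk $\{|x|<R\}$ together with unique continuation to eliminate $\alpha_0$---works, but is unnecessarily elaborate: once you are willing to restrict to an open region where $\varphi\equiv 1$, you may as well treat all $N+1$ functions at once. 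The paper's route also avoids a second explicit appeal to unique continuation, since that ingredient is already packaged inside Lemma~\ref{lem1}.
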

\begin{proof}
By construction, the functions are in $L^2(\Lambda)$.
Furthermore, one has $v_j=u_j$ in $\Omega$ for $R>R_0$,
and the result follows from Lemma~\ref{lem1}.
\end{proof}
Now we are going to  show the inequality \eqref{eq-mm1} for $V:=\Span(v_0,\dots,v_N)$
with a large $R$. It is sufficient to show that
\begin{multline}
     \label{eq-mm2}
\sup_{\xi\in \RR^{n+1},\,|\xi|=1}
\bigg(
\Big\|\sum\nolimits_{j=0}^N \xi_j \nabla v_j \Big\|^2_{L^2(\Lambda)} -\nu \Big\|\sum\nolimits_{j=0}^N \xi_j  v_j\Big\|^2_{L^2(\Lambda)}- \gamma \Big\|\sum\nolimits_{j=0}^N \xi_j v_j\Big\|^2_{L^2(\Omega)}
\bigg)\\
\equiv
\sup_{\xi\in \RR^{n+1},\,|\xi|=1} \big\langle \xi, (A-\gamma B) \xi\big\rangle_{\RR^{n+1}}<0,
\end{multline}
with $A=(a_{jk})$, $B=(b_{jk})$,
\[
a_{jk}:=\int_{\Lambda} \nabla v_j\cdot \nabla v_k\dd x
- \nu \int_{\Lambda}  v_j v_k\dd x,
\quad
b_{jk}:=\int_{\Omega}  v_j v_k\dd x, \quad
j,k\in\{0,\dots,N\}.
\]
More precisely, the coefficients of $A$ are
\begin{equation}
    \label{eq-coefa}
\begin{aligned}
a_{00}&=\int_{\Lambda} |\nabla(\varphi u_0)\big|^2\,\dd x - \nu\int_{\Lambda} (\varphi u_0)^2\, \dd x,\\
a_{j0}&=a_{0j}=\int_{\Lambda} \nabla(\varphi u_0)\cdot \nabla u_j\,\dd x - \nu\int_{\Lambda} \varphi u_0 u_j\, \dd x,
\quad j\in\{1,\dots,N\big\},\\
a_{jk}&= (\lambda_j-\nu)\delta_{jk}, \quad j,k\in\{1,\dots,N\big\}. \nonumber
\end{aligned}
\end{equation}
To estimate $a_{00}$ we remark that
\[
\int_{\Lambda} |\nabla(\varphi u_0)\big|^2\,\dd x 
= \int_{\Lambda} |\nabla \varphi|^2 u_0^2\,\dd x
+ \int_{\Lambda} \varphi^2 |\nabla u_0|^2\,\dd x
+2 \int_{\Lambda} \varphi u_0 \nabla\varphi\cdot\nabla u_0\, \dd x,
\]
and an integration by parts gives
\begin{multline*}
2 \int_{\Lambda} \varphi u_0 \nabla\varphi\cdot\nabla u_0\, \dd x=
\int_{\Lambda} \nabla( \varphi^2) \cdot \big( u_0\nabla u_0)\, \dd x=-\int_{\Lambda} \varphi^2 \nabla \cdot ( u_0\nabla u_0)\, \dd x\\
=-\int_{\Lambda} \varphi^2 |\nabla u_0|^2 \dd x
+\int_{\Lambda} \varphi^2 (-\Delta u_0) u_0\, \dd x
=-\int_{\Lambda} \varphi^2 |\nabla u_0|^2 \dd x
+\nu \int_{\Lambda} \varphi^2 u_0^2\, \dd x,
\end{multline*}
resulting in
\[
a_{00}=\int_{\Lambda} |\nabla \varphi|^2 u_0^2\,\dd x.
\]
For large $R$ there holds $\|\nabla\varphi\|_\infty\le R^{-1}\|\chi'\|_\infty=\cO(R^{-1})$,
and the volume of $\Lambda\mathop{\cap}\supp \nabla\varphi$ is $\cO(R)$. Hence, due to the boundedness
of $u_0$ there holds $a_{00}=\cO(R^{-1})$ as $R\to+\infty$.

To estimate $a_{j0}$ with $j\ne0$ we remark first that
\begin{align*}
\int_{\Lambda} \nabla(\varphi u_0)\cdot \nabla u_j\, \dd x
&=-\int_{\Lambda} \Delta(\varphi u_0) u_j\, \dd x\\
&=\int_{\Lambda} (-\Delta\varphi) u_0 u_j\, \dd x
-2\int (u_j\nabla \varphi )\cdot \nabla u_0\, \dd x
+\int_{\Lambda} \varphi (-\Delta u_0) u_j\, \dd x\\
&=\int_{\Lambda} (-\Delta\varphi) u_0 u_j\, \dd x
+2\int_{\Lambda} u_0\nabla\cdot (u_j\nabla \varphi )\, \dd x
+\nu \int_{\Lambda} \varphi  u_0  u_j\, \dd x\\
&= \int_{\Lambda} (\Delta \varphi)u_0 u_j\, \dd x
+2 \int_{\Lambda} u_0\nabla u_j\cdot \nabla \varphi \, \dd x
+\nu \int_{\Lambda} \varphi  u_0  u_j\, \dd x,
\end{align*}
hence,
\[
a_{j0}=a_{0j}= \int_{\Lambda} (\Delta \varphi)u_0 u_j\, \dd x
+2 \int_{\Lambda} u_0\nabla u_j\cdot \nabla \varphi \, \dd x.
\]
We estimate, using the Cauchy-Schwarz inequality,
\begin{multline*}
\bigg|
\int_{\Lambda} u_0\nabla u_j\cdot \nabla \varphi \, \dd x
\bigg|
\le
\int_{\Lambda} |\nabla u_j| \cdot |u_0\nabla \varphi| \, \dd x\\
\le
\sqrt{\int_{\Lambda} |\nabla u_j|^2\, \dd x}\cdot
\sqrt{\int_{\Lambda} |\nabla \varphi|^2 u_0^2\, \dd x}
=\cO\Big(\dfrac{1}{\sqrt{R}}\Big), \quad R\to+\infty.
\end{multline*}
Due to
\[
\Delta\varphi(x)= \dfrac{1}{R^2}\,\chi''\Big(\dfrac{|x|}{R}\Big)+\dfrac{d-1}{|x|}\,\dfrac{1}{R}\,\chi'\Big(\dfrac{|x|}{R}\Big)
\]
one has $\|\Delta \varphi\|_\infty=\cO(R^{-1})$ for large $R$. At the same time, the volume of $\Lambda\mathop{\cap}\supp(\Delta \varphi)$ is $\cO(R)$ and $u_0$ is bounded, therefore,
\[
\bigg|\int_{\Lambda} (\Delta \varphi)u_0 u_j\, \dd x\bigg|
\le \sqrt{\int_{\Lambda} (\Delta \varphi)^2u_0^2\, \dd x} \cdot
\sqrt{\int_{\Lambda} u_j^2\, \dd x}=\cO\Big( \dfrac{1}{\sqrt R}\Big),
\]
hence,
$a_{j0}=a_{0j}=\cO(R^{-\frac 12})$ as $R\to+\infty$ for $j\in\{1,\dots, N\big\}$, and,
\[
A=\diag(
0, \lambda_1-\nu, \dots, \lambda_N-\nu
)
+\cO(R^{-\frac 12}), \quad R\to+\infty.
\]
In particular, for a suitable $a>0$ there holds
\begin{equation}
    \label{eq-aaa}
\sup_{\xi\in \RR^{n+1},\, |\xi|=1} \langle \xi, A\xi\rangle_{\RR^{n+1}} \le a R^{-\frac 12} \text{ for } R\to+\infty.
\end{equation}
To estimate $B$ we remark that for  $R>R_0$ one has $v_j=u_j$ in $\Omega$, and
\[
b_{jk}=\int_{\Omega} u_j\, u_k\dd x, \quad j,k\in\{0,\dots,N\}.
\]
Hence, due to the compactness of the unit ball of $\RR^{n+1}$
and to Lemma~\ref{lem1} there holds
\begin{equation}
       \label{eq-bbb}
\inf_{\xi\in \RR^{n+1},\, |\xi|=1} \langle \xi,B\xi\rangle_{\RR^{n+1}}
=
\inf_{\xi\in \RR^{n+1},\, |\xi|=1} \Big\|
\sum\nolimits_{j=0}^N \xi_j u_j \Big\|_{L^2(\Omega)}^2=:b>0.
\end{equation}
The combination of \eqref{eq-aaa} and \eqref{eq-bbb} gives
\[
\sup_{\xi\in \RR^{n+1},\,|\xi|=1} \big\langle \xi, (A-\gamma B) \xi\big\rangle_{\RR^{n+1}}
\le 
a R^{-\frac 12} -\gamma b <0 \text{ for } R\to+\infty,
\]
and the substitution into \eqref{eq-mm2} concludes the proof.

\subsection{Perturbations producing no eigenvalues}

\begin{lemma}\label{lem8}
Assume that the inequality \eqref{eq-lnun} is satisfied, then
for sufficiently small $\gamma>0$ the operator
$-\Delta^\Lambda_D-\gamma 1_C$ has exactly $N(\Lambda)$
eigenvalues in $(0,\nu)$.
\end{lemma}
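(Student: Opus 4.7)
The plan is to combine two standard tools: monotonicity of eigenvalues with respect to the potential (for the lower bound on the eigenvalue count) and Neumann bracketing on the interface between the center and the branches (for the upper bound).

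First I would establish the lower bound. The essential spectrum of $-\Delta^\Lambda_D-\gamma 1_C$ is $[\nu,\infty)$ because $\gamma 1_C$ is a compactly supported bounded perturbation, and by the min-max principle $\lambda_j(-\Delta^\Lambda_D-\gamma 1_C)\le \lambda_j(-\Delta^\Lambda_D)$ for every $j$. Since the $N(\Lambda)$ discrete eigenvalues of $-\Delta^\Lambda_D$ lie strictly in $(0,\nu)$ and are finite in number, by continuity of each such eigenvalue in $\gamma$, for sufficiently small $\gamma>0$ they remain in $(0,\nu)$. Hence $-\Delta^\Lambda_D-\gamma 1_C$ has at least $N(\Lambda)$ eigenvalues in $(0,\nu)$.

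Next I would prove the matching upper bound via Neumann bracketing along the interface $\Sigma:=\partial C\setminus \partial\Lambda$ that separates the center from the branches. Imposing an additional Neumann condition on $\Sigma$ produces the decoupled self-adjoint operator
\[
T_\gamma:=\bigl(-\Delta^C_{DN}-\gamma\bigr)\oplus \bigoplus_{j=1}^n \bigl(-\Delta^{B_j}_{DN}\bigr)
\]
on $L^2(C)\oplus\bigoplus_j L^2(B_j)=L^2(\Lambda)$, and the usual variational argument (restriction from $H^1_0(\Lambda)$ to the subspace whose traces on $\Sigma$ are not required to match) yields $-\Delta^\Lambda_D-\gamma 1_C\ge T_\gamma$ in the quadratic-form sense. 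Each branch operator $-\Delta^{B_j}_{DN}$ on $B_j\simeq \RR_+\times \omega_j$ with Dirichlet condition on $\RR_+\times\partial\omega_j$ and Neumann condition on $\{0\}\times\omega_j$ has, by separation of variables, purely essential spectrum equal to $[\nu_j,\infty)\subset [\nu,\infty)$ with no eigenvalues below $\nu$. Consequently, the eigenvalues of $T_\gamma$ lying below $\nu$ coincide with those eigenvalues $\mu$ of $-\Delta^C_{DN}$ satisfying $\mu-\gamma<\nu$. By the assumption \eqref{eq-lnun} we may fix $\gamma>0$ with $\gamma<\lambda_{N(\Lambda)+1}(-\Delta^C_{DN})-\nu$, so that at most $N(\Lambda)$ eigenvalues of $-\Delta^C_{DN}$ lie below $\nu+\gamma$, hence $T_\gamma$ has at most $N(\Lambda)$ eigenvalues below $\nu$. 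The min-max principle then gives $\lambda_{N(\Lambda)+1}(-\Delta^\Lambda_D-\gamma 1_C)\ge \lambda_{N(\Lambda)+1}(T_\gamma)\ge \nu$, so $-\Delta^\Lambda_D-\gamma 1_C$ has at most $N(\Lambda)$ eigenvalues in $(-\infty,\nu)$. Combining the two bounds gives exactly $N(\Lambda)$ eigenvalues in $(0,\nu)$ for sufficiently small $\gamma>0$.

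The main technical point is the Neumann bracketing step: one must justify that the form domain of $-\Delta^\Lambda_D-\gamma 1_C$ is a subspace of the form domain of $T_\gamma$ (which is obtained by dropping the trace-matching requirement across $\Sigma$), and that the two forms coincide on $H^1_0(\Lambda)$. This is a routine consequence of the Lipschitz regularity of $\partial C$ and of the fact that elements of $H^1_0(\Lambda)$ restrict to elements of the relevant form domains on $C$ and on each $B_j$ with matching traces on $\Sigma$; nothing beyond the standard framework of quadratic forms on Lipschitz domains is needed.
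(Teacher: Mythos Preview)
Your proof is correct and follows essentially the same approach as the paper: both arguments obtain the lower bound on the eigenvalue count from the fact that the perturbation is non-positive and compactly supported, and both obtain the upper bound by Neumann bracketing along the interface $\partial C\setminus\partial\Lambda$, decoupling into $(-\Delta^C_{DN}-\gamma)\oplus\bigoplus_j(-\Delta^{B_j}_{DN})$ and using the hypothesis $\lambda_{N(\Lambda)+1}(-\Delta^C_{DN})>\nu$ to choose $\gamma$ small. The paper's version is simply more terse (phrased as a contradiction) and omits the routine form-domain justification you spell out in your last paragraph.
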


\begin{proof} The perturbation potential is non-positive and with a compact support,
hence, it does not change the essential spectrum and
one has at least $N=N(\Lambda)$ eigenvalues in $(0,\nu)$.
Assume that there exists an $(N+1)$th eigenvalue, then by the min-max principle
it should satisfy
$\lambda_{N+1}(-\Delta^\Lambda_D-\gamma 1_C)\ge \lambda_{N+1}(A)$,
where $A$ in the operator $-\Delta-\gamma 1_C$ in $L^2(\Lambda)$
with the Dirichlet boundary condition at $\partial\Lambda$ and
an additional Neumann boundary condition at the both sides of
$\partial C\mathop{\cap}\partial\Lambda$. The operator
$A$ is unitarily equivalent
to $(-\Delta^C_{DN}-\gamma)\oplus A_1\dots \oplus A_n$,
where each $A_j$ is the Laplacian in $L^2(\RR_+\times\omega_j)$
with the Dirichlet boundary condition at $(\partial \omega_j)\times \RR_+$
and with the Neumann boundary condition at $\omega_j\times\{0\}$,
and by the separation of variables one has $\spec (A_j)=[\nu_j,+\infty)$
and $A_j\ge \nu$.
Therefore, $\lambda_{N+1}(A)=\lambda_{N+1}(-\Delta^C_{DN}-\gamma)=
\lambda_{N+1}(-\Delta^C_{DN})-\gamma$, and
$\lambda_{N+1}(-\Delta^\Lambda_D-\gamma 1_C)
\ge \lambda_{N+1}(-\Delta^C_{DN})-\gamma$.
By \eqref{eq-lnun}, for sufficiently small $\gamma$
the right-hand side is still greater than $\nu$,
while the left-hand side is strictly less than $\nu$,
which is a contradiction.
\end{proof}

\section{Examples}\label{sexam}

Due to a large number of possible examples, cf.~\cite{naz12},
we restrict our attention to the configurations
for which either a particularly explicit result
or an improvement of previous studies can be presented.

\subsection{Rounded corner}

As one of the simplest examples one can consider the configuration $\Lambda$
consisting of two copies of the half-strip $\RR_+\times (0,1)$
attached to the flat sides of a circular sector $C$ of unit radius
and of opening $\alpha\in(0,\pi)$, see Figure~\ref{fig-r}(a).
In the polar coordinates
$(r,\theta)$ one has $C:=\big\{ (r,\theta): r\in(0,1),\, \theta\in(0,\alpha) \big\}$.
The cross-section is $\omega=(0,1)$ with $\nu=\pi^2$.

\begin{figure}
\centering
\begin{minipage}{65mm}
\begin{center}
\includegraphics[height=40mm]{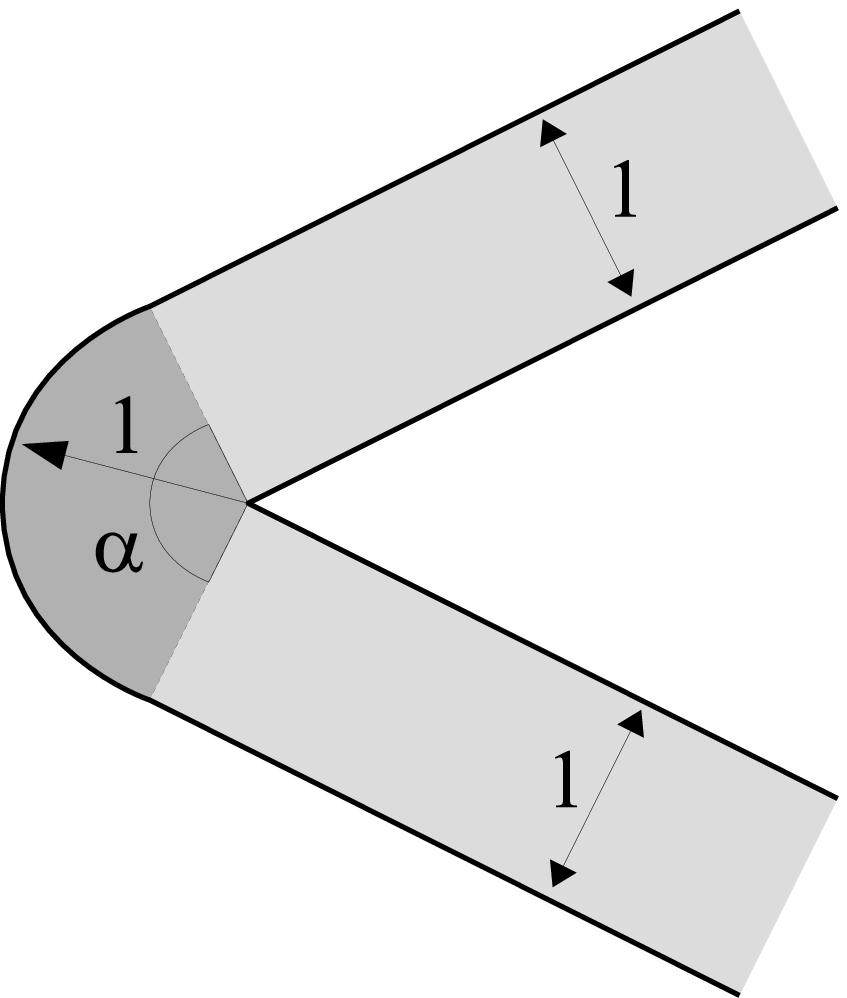}\\
(a)
\end{center}
\end{minipage}
\begin{minipage}{65mm}
\begin{center}
\includegraphics[height=40mm]{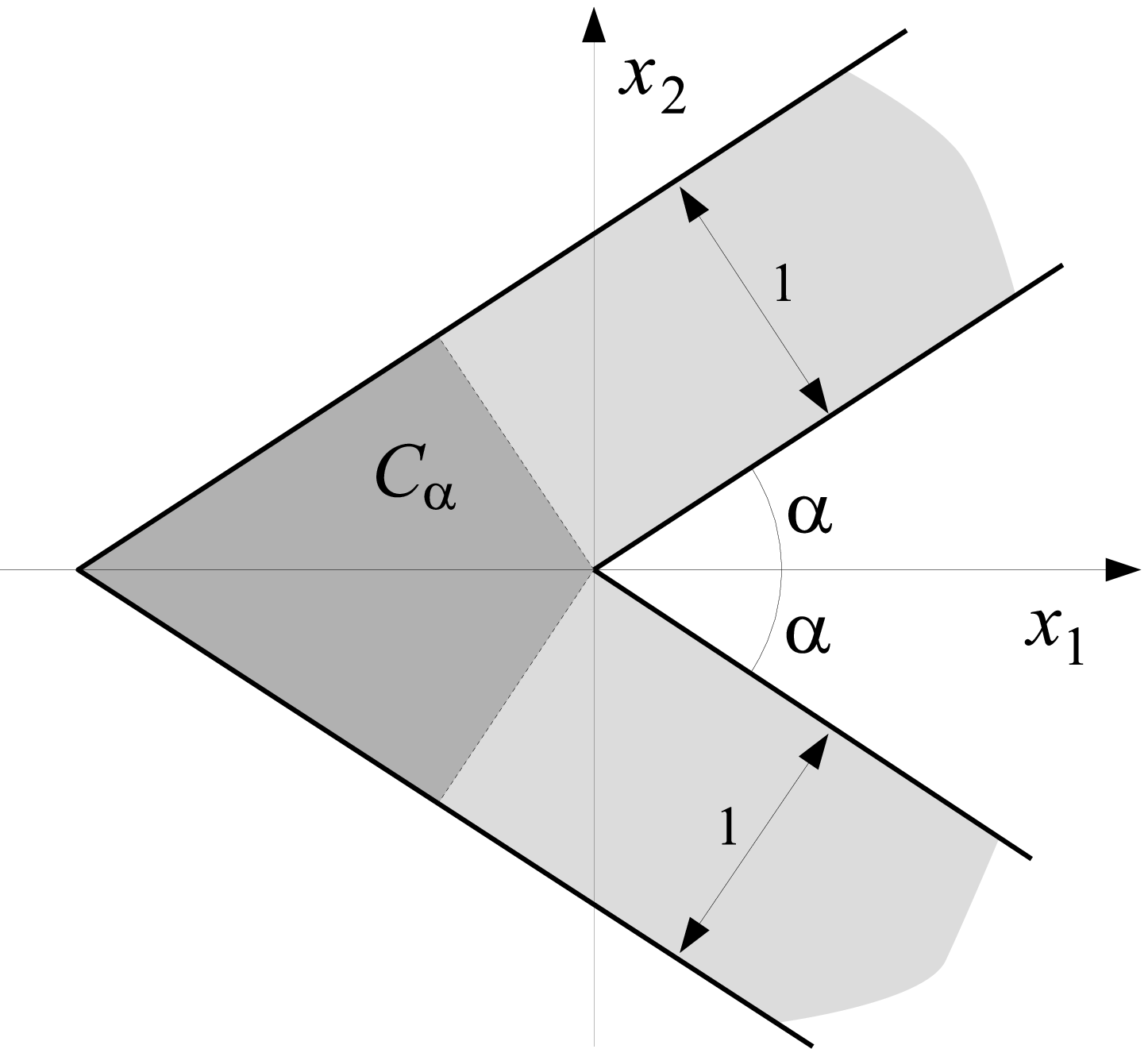}\\
(b)
\end{center}
\end{minipage}
\caption{(a) Waveguide with a rounded corner. The center $C$ is dark-shaded.
(b) Broken waveguide $\Lambda_\alpha$ with a dark-shaded center $C_\alpha$.\label{fig-r}}
\end{figure}

\begin{prop}
For any $\alpha\in(0,\pi)$, the operator $-\Delta^\Lambda_D$
has a single discrete eigenvalue and no threshold resonance.
\end{prop}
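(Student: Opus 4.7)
The plan is to invoke Corollary~\ref{cor1} for the given center $C$. Two points have to be checked: (a)~$-\Delta^\Lambda_D$ has a discrete eigenvalue below $\nu$; (b)~$\lambda_2(-\Delta^C_{DN})>\nu=\pi^2$.

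For (a), the domain $\Lambda$ is a planar waveguide of constant width $1$ with a non-trivial bend, and the existence of an eigenvalue of $-\Delta^\Lambda_D$ below the essential spectrum is a classical fact about bent planar strips, see for example \cite{bulla,es,gold}. Thus $N(\Lambda)\ge 1$ and $-\Delta^\Lambda_D$ has non-empty discrete spectrum.

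To handle (b), I would separate variables on $C=\{(r,\theta): r\in(0,1),\ \theta\in(0,\alpha)\}$ in polar coordinates, with Dirichlet imposed on the arc $r=1$ and Neumann on the two radii $\theta=0$ and $\theta=\alpha$. The angular eigenfunctions satisfying $g'(0)=g'(\alpha)=0$ are $g_n(\theta)=\cos(n\pi\theta/\alpha)$ with eigenvalue $(n\pi/\alpha)^2$, $n=0,1,2,\dots$. The corresponding radial factor, regular at $r=0$ and vanishing at $r=1$, is $J_{n\pi/\alpha}(\sqrt{\lambda}\,r)$ with $\sqrt{\lambda}$ a positive zero of $J_{n\pi/\alpha}$. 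Writing $j_{\mu,k}$ for the $k$th positive zero of the Bessel function $J_\mu$, the spectrum of $-\Delta^C_{DN}$ is exactly $\{j_{n\pi/\alpha,k}^2:\ n\ge 0,\ k\ge 1\}$.

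The smallest eigenvalue is $j_{0,1}^2\approx 5.78$, and the second equals $\min(j_{0,2}^2,\ j_{\pi/\alpha,1}^2)$. One has $j_{0,2}^2\approx 30.5>\pi^2$ immediately, so the remaining task is to show $j_{\pi/\alpha,1}>\pi$ for every $\alpha\in(0,\pi)$. Here I would use the strict monotonicity of $\mu\mapsto j_{\mu,1}$ combined with the explicit identity $J_{1/2}(x)=\sqrt{2/(\pi x)}\sin x$, which gives $j_{1/2,1}=\pi$; since $\alpha<\pi$ forces $\pi/\alpha>1>1/2$, one concludes $j_{\pi/\alpha,1}>\pi$, and hence $\lambda_2(-\Delta^C_{DN})>\nu$. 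Corollary~\ref{cor1} then yields the claim. The only step requiring genuine input is the comparison of $j_{\pi/\alpha,1}$ with $\pi$, which is rendered almost trivial by the closed form for $J_{1/2}$.
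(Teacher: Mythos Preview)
Your proposal is correct and follows essentially the same route as the paper: invoke Corollary~\ref{cor1}, obtain the discrete eigenvalue from the curved-waveguide results, and separate variables on the sector to reduce $\lambda_2(-\Delta^C_{DN})>\pi^2$ to Bessel-zero inequalities. The only difference is in how those inequalities are verified---the paper quotes the bounds $j_{s,k}>s+k\pi-\tfrac12$ (for $s>\tfrac12$) and $j_{s,k}>s+k\pi-\tfrac{\pi}{2}+\tfrac12$ (for $s>-\tfrac12$) from \cite{34}, whereas you use the explicit identity $j_{1/2,1}=\pi$ together with the monotonicity of $\mu\mapsto j_{\mu,1}$ and the tabulated value of $j_{0,2}$; both are perfectly adequate.
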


\begin{proof} The existence of at least one eigenvalue
follows from the general results for curved waveguides of constant
width~\cite{gold}.
The associated operator $-\Delta^C_{DN}$ admits a separation of variables
in polar coordinates, and the eigenvalues are the numbers
$\lambda_{n,k}:=\big(j_{\frac{\pi n}{\alpha},k}\big)^2$,
$n\in\NN\mathop{\cup}\{0\}$, $k\in \NN$, where
$j_{s,k}$ is the $k$th zero of the Bessel function $J_s$.
Recall, see e.g. \cite{34}, that we have
the inequalities $j_{s,k}>s + k\pi -\frac{1}{2}$ for $s> \frac{1}{2}$
and $j_{s,k}>s + k\pi -\frac{\pi}{2}+\frac{1}{2}$ 
for $s> -\frac{1}{2}$, and it follows that
$\lambda_{n,k}>\nu$ for $(n,k)\ne (0,1)$.
As the lowest eigenvalue $\lambda_{0,1}$
is simple, the result follows by Corollary~\ref{cor1}.
\end{proof}

\subsection{Broken waveguide}\label{ss-brok}

Consider the domain
\[
\Lambda_\alpha=\Big\{(x_1,x_2): \dfrac{\cos \alpha}{\sin\alpha}\, |x_2|-\dfrac{1}{\sin\alpha}
<x_1< \dfrac{\cos \alpha}{\sin\alpha} |x_2|\Big\},
\quad
\alpha\in\Big(0,\frac{\pi}{2}\Big).
\]
The domain can be considered
as two copies of the half-strip $\RR_+\times(0,1)$
attached to a quadrangle $C_\alpha$ having a symmetry axis,
see Figure~\ref{fig-r}(b). As in the previous example, $\nu=\pi^2$.
It is known since a long time, cf. \cite{avi}, that
the discrete spectrum is always non-empty, that
each discrete eigenvalue is monotonically increasing
with respect to $\alpha$, that the number of the eigenvalues
increases infinitely as $\alpha$ approaches $0$, and the eigenvalue
asymptotics for small $\alpha$ is computed in~\cite{dr2}.
A very detailed discussion can be found in \cite{dr1}.
We would like to improve the existing results as follows.

\begin{prop}\label{prop-br}
For $\alpha \in \big(\arctan \frac{\sqrt 3}{4},\frac{\pi}{2} \big)$
the operator $-\Delta^{\Lambda_\alpha}_D$
has a single discrete eigenvalue and no threshold resonance.
\end{prop}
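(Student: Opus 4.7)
My plan is to apply Corollary~\ref{cor1}. Since $N(\Lambda_\alpha)\geq 1$ holds for every $\alpha\in(0,\pi/2)$ by the classical results of~\cite{avi,dr1}, it is enough to exhibit a center with $\lambda_2(-\Delta^{C_\alpha}_{DN})>\pi^2$. I will take $C_\alpha$ to be the quadrangle with vertices $(0,0)$, $(-\sin\alpha,\pm\cos\alpha)$ and $(-1/\sin\alpha,0)$ obtained by perpendicular cuts to both branches through the outer bending corner $(0,0)$; it is invariant under $x_2\mapsto -x_2$, and the corresponding reflection splits $-\Delta^{C_\alpha}_{DN}=A^+\oplus A^-$, where both $A^\pm$ act on the upper half-triangle $C_\alpha^+$: a right triangle with Neumann short leg of length $1$, Dirichlet long leg of length $\cot\alpha$, and hypotenuse of length $1/\sin\alpha$ lying on the symmetry axis, which is Neumann for $A^+$ and Dirichlet for $A^-$.

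To handle $A^-$ I reflect $C_\alpha^+$ across its Neumann short leg: the right angle at that leg doubles to a straight angle, so the doubled figure is a genuine triangle with all boundaries Dirichlet, apex angle $\pi-2\alpha$, base $2\cot\alpha$ and height $1$. Its ground state is positive and even under the reflection, hence $\lambda_1(A^-)$ equals the lowest Dirichlet eigenvalue of this triangle, which by domain monotonicity inside a bounding $2\cot\alpha\times 1$ rectangle is at least $\pi^2(1+\tfrac14\tan^2\alpha)>\pi^2$. Therefore the entire spectrum of $A^-$ lies above $\pi^2$, so the target inequality reduces to $\lambda_2(A^+)>\pi^2$.

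To prove the latter I introduce coordinates placing the right angle of $C_\alpha^+$ at the origin, the Neumann short leg on the positive $u$-axis and the Dirichlet long leg on the positive $v$-axis, so $C_\alpha^+=\{u,v>0,\,u+v\tan\alpha<1\}$. My scheme is to insert a Neumann cut along the horizontal midline $v=\tfrac12\cot\alpha$ and invoke Dirichlet--Neumann bracketing. The upper triangular piece $T_>$ is a scaled copy of $C_\alpha^+$ with factor $\tfrac12$, so its ground state equals $4\lambda_1(A^+)$, and this exceeds $\pi^2$ provided $\lambda_1(A^+)>\pi^2/4$ strictly; the strictness holds because the 1D Poincar\'e minimizer $\sin(\pi u/2)$ fails to satisfy the Neumann condition on the hypotenuse of $C_\alpha^+$. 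For the lower trapezoidal piece $T_<$ the target is the lower bound $\lambda_2(T_<)\geq\pi^2(\tfrac14+4\tan^2\alpha)$, which is exactly the second eigenvalue of the rectangle $R_<=(0,1)\times(0,\tfrac12\cot\alpha)$ carrying Dirichlet on its left short side and Neumann on the other three. The threshold $\tan\alpha>\sqrt 3/4$ then enters precisely as the condition $\pi^2(\tfrac14+4\tan^2\alpha)>\pi^2$, and combining this with $\lambda_1(T_>)>\pi^2$ yields $\lambda_2(A^+)>\pi^2$.

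The main technical obstacle I anticipate is justifying the comparison $\lambda_2(T_<)\geq \lambda_2(R_<)$: ordinary domain monotonicity does not apply directly because $T_<$ carries an additional Neumann side (the slanted one, lying interior to $R_<$), and switching on a Neumann boundary lowers rather than raises eigenvalues. I plan to handle this by reflecting $T_<$ across its slanted Neumann edge, observing that the reflection maps the complementary corner of $R_<$ into a subregion of $T_<$; this allows one to extend any $\psi\in H^1(T_<)$ to a function $\tilde\psi\in H^1(R_<)$ with the correct Dirichlet boundary, at the cost of a weighted Rayleigh quotient on $T_<$, and then either to analyse the weight directly or to construct an explicit two-dimensional trial subspace inside $H^1(T_<)$ from the first two eigenfunctions of $R_<$ so as to secure the required lower bound.
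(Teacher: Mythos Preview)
Your overall strategy matches the paper's: apply Corollary~\ref{cor1}, split $-\Delta^{C_\alpha}_{DN}$ into its even and odd parts $A^+\oplus A^-$, and handle the odd part $A^-$ by comparing with a fully Dirichlet triangle. Your treatment of $A^-$ is fine and gives the same bound $\pi^2(1+\tfrac14\tan^2\alpha)$ as the paper (which obtains it directly via one-dimensional Poincar\'e inequalities rather than by reflection).

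The real divergence, and the real problem, is in the treatment of $A^+$. The paper does \emph{not} cut the triangle further. Instead it computes $\lambda_2(A^+)$ exactly at the special angle $\alpha=\tfrac{\pi}{6}$ by unfolding the two Neumann sides into a Dirichlet equilateral triangle (whose spectrum is explicit), obtaining $\lambda_2(A^N_{\pi/6})\ge\tfrac{16\pi^2}{9}$. Then a one-parameter linear stretching relates $\lambda_k(A^N_\alpha)$ to $\lambda_k(A^N_{\pi/6})$ for all $\alpha$, yielding $\lambda_2(A^N_\alpha)\ge\tfrac{16\pi^2}{3}\tan^2\alpha$ for $\alpha<\tfrac{\pi}{6}$, which is $>\pi^2$ precisely when $\tan\alpha>\tfrac{\sqrt3}{4}$.

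Your Neumann midline cut is a different idea, but it leaves a genuine gap. You yourself identify the crux: you need $\lambda_2(T_<)\ge\lambda_2(R_<)$, and neither of your proposed fixes works. The reflection across the slanted side maps the complementary corner into $T_<$ only when $\alpha<\tfrac{\pi}{4}$ (the reflected apex has $v$-coordinate $\tfrac12\cot\alpha-\tfrac12\sin2\alpha$, which becomes negative for $\alpha\ge\tfrac{\pi}{4}$). Even when the reflection does fit, the extension $\psi\mapsto\tilde\psi$ gives $\int_{R_<}|\nabla\tilde\psi|^2/\int_{R_<}|\tilde\psi|^2=\int_{T_<}w|\nabla\psi|^2/\int_{T_<}w|\psi|^2$ with weight $w=1+\mathbf{1}_{\text{reflected corner}}$, so you compare $\lambda_k(R_<)$ with eigenvalues of a \emph{weighted} problem on $T_<$, not with $\lambda_k(T_<)$ itself. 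Your fallback of ``constructing an explicit two-dimensional trial subspace inside $H^1(T_<)$'' is backwards: a trial subspace in the min--max gives an \emph{upper} bound on $\lambda_2(T_<)$, not the lower bound you need. The fact that your target $\pi^2(\tfrac14+4\tan^2\alpha)$ and the paper's bound $\tfrac{16\pi^2}{3}\tan^2\alpha$ both cross $\pi^2$ at $\tan\alpha=\tfrac{\sqrt3}{4}$ is a numerical coincidence, not evidence that your route closes.
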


\begin{figure}
\centering
\begin{tabular}{cc}
\begin{minipage}[c]{90mm}
\begin{center}
\includegraphics[height=30mm]{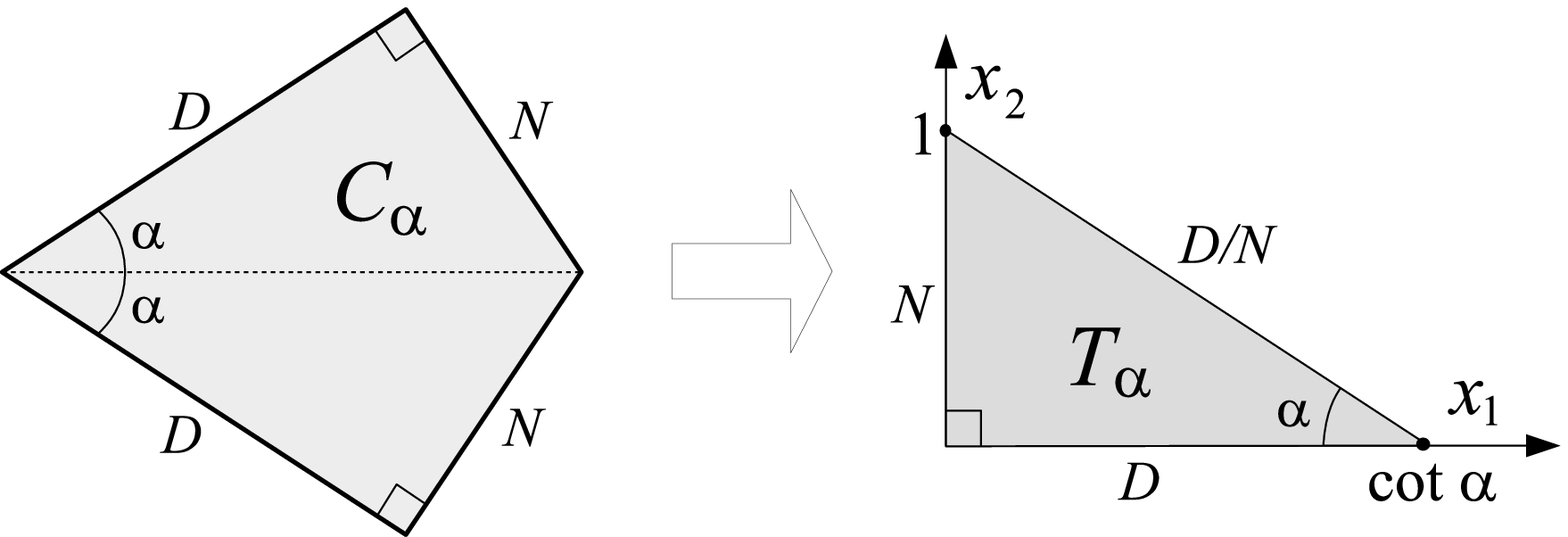}
\end{center}
\end{minipage}
&
\begin{minipage}[c]{50mm}
\begin{center}
\includegraphics[height=30mm]{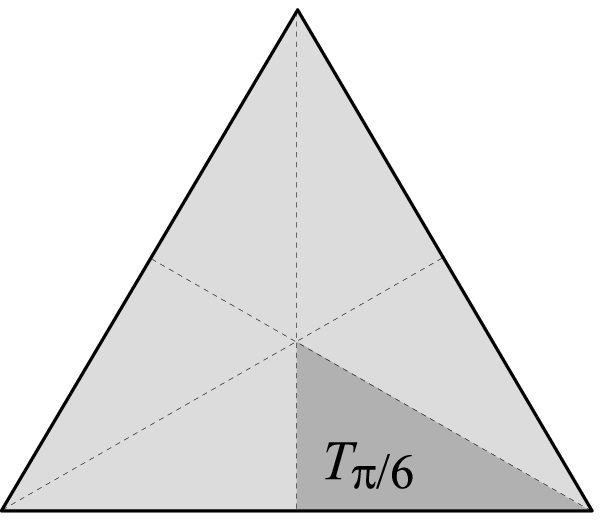}
\end{center}
\end{minipage}\\
(a) & (b)
\end{tabular}
\caption{(a) The quadrangle $C_\alpha$ and the triangle $T_\alpha$. The symbols $D/N$ correspond
to the Dirichlet/Neumann boundary condition.
 (b) The equilaterial triangle~$\Omega$.\label{fig-br}}
\end{figure}

\begin{proof}
In view of Corollary~\ref{cor1} it is sufficient to show that $\lambda_2(-\Delta^{C_\alpha}_{DN})>\pi^2$ for $\alpha$ in the interval indicated.
The decomposition of $C_\alpha$ with respect to the horizontal symmetry axis
shows that $-\Delta^{C_\alpha}_{DN}$ is unitarily equivalent
to $A^D_\alpha\oplus A^N_\alpha$, where $A^{D/N}_\alpha$ are the Laplacians
on the right-angled triangle
$T_\alpha:= \big\{
(x_1,x_1):\, 0<x_2<1- x_1 \tan \alpha
\big\}$ with the Dirichlet boundary condition on the bottom side,
with the Neumann boundary condition on the left side
and with the Dirichlet/Neumann boundary condition
on the hypotenuse, see Figure~\ref{fig-br}(a).
Denote
\[
U:=\big\{u\in C^\infty (\overline{T_\alpha}):
u(x_1,x_2)=0 \text{ for $x_1=0$ or $x_2=1- x_1 \tan \alpha$}\big\},
\]
then $\lambda_1(A^D_\alpha)=\inf_{u\in U\setminus\{0\}}\|\nabla u\|^2_{L^2(T_\alpha)}/ \|u\|^2_{L^2(T_\alpha)}$.
Furthermore, for $u\in U$ we have the
one-dimensional inequalities
\begin{align*}
\int_{T_\alpha} \Big(\dfrac{\partial u}{\partial x_1}\Big)^2\dd x
&=\int_0^1 \int_0^{(1- x_2) \cot \alpha} 
\Big(\dfrac{\partial u}{\partial x_1}\Big)^2\dd x_1\,\dd x_2\\
&\ge 
\int_0^1 \dfrac{\pi^2}{4 (1- x_2)^2 \cot^2 \alpha}\int_0^{(1- x_2) \cot \alpha} 
 u(x_1,x_2)^2\dd x_1\,\dd x_2,\\
\int_{T_\alpha} \Big(\dfrac{\partial u}{\partial x_2}\Big)^2\dd x
&=\int_0^{\cot \alpha} \int_0^{1- x_1 \tan \alpha} 
\Big(\dfrac{\partial u}{\partial x_2}\Big)^2\dd x_2\,\dd x_1\\
&\ge
\int_0^{\cot \alpha} \dfrac{\pi^2}{(1- x_1 \tan \alpha)^2}\int_0^{1- x_1 \tan \alpha} 
u(x_1,x_2)^2\dd x_2\,\dd x_1,
\end{align*}
hence,
\[
\|\nabla u\|^2_{L^2(T_\alpha)}
\ge \pi^2\int_{T_\alpha }
\bigg[
\dfrac{\tan^2\alpha}{4 (1- x_2)^2}
+
\dfrac{1}{(1- x_1 \tan \alpha)^2}
\bigg]\, u^2 \,\dd x
\ge \pi^2\Big(\dfrac{1}{4}\, \tan^2 \alpha +1\Big)
\|u\|^2_{L^2(T_\alpha)}.
\]
Therefore, $A^D_\alpha>\pi^2$ for any $\alpha$, and it remains to find a condition
guaranteeing that $\lambda_2(A^N_\alpha)>\pi^2$.

Let us study now the operator $A^N_{\frac \pi 6}$. Remark that any eigenfunction of $A^N_{\frac \pi 6}$
can be extended, using the symmetries with respect to the Neumann sides,
to a Dirichlet eigenfunction of the equilaterial triangle $\Omega$
with side length $2\sqrt 3$, see Figure~\ref{fig-br}(b). Therefore, for any $k\in \NN$
we have $\lambda_k(A^N_{\frac \pi 6})\ge \lambda_k(-\Delta^\Omega_{D,s})$,
where $-\Delta^\Omega_{D,s}$ is the restriction of the Dirichlet Laplacian $-\Delta^\Omega_D$ in $\Omega$
to the functions satisfying the Dirichlet boundary conditions,
symmetric with respect to the medians and invariant under the rotations by $\frac{2\pi}{3}$ around the center of the triangle.
Recall that the eigenvalues and the eigenfunctions of the Dirichlet Laplacian
on the equilateral triangles are known explicitly, see e.g.~\cite{prag}, and the eigenvalues
of $-\Delta^\Omega_D$ are the numbers
\[
\mu_{m,n}=\dfrac{4 \pi^2}{27} \big(m^2+mn+n^2), \quad (m,n)\in\NN\times\NN.
\]
The eigenfunction corresponding to the first eigenvalue
$\mu_{1,1}$ belongs to the domain of $-\Delta^\Omega_{D,s}$,
hence, $\lambda_1(-\Delta^\Omega_{D,s})=\frac{4\pi^2}{9}$.
On the other hand, one has $\lambda_2(-\Delta^\Omega_D)=\lambda_3(-\Delta^\Omega_D)=\mu_{1,2}\equiv\mu_{2,1}$, but no associated
eigenfunction has the required symmetries:
there is just one eigenfunction symmetric with respect to one of medians, but
it is not rotationally invariant.
Hence, $\lambda_2(A^N_{\pi/6})\ge \lambda_2(-\Delta^\Omega_{D,s})\ge \lambda_4(-\Delta^\Omega_{D})=\mu_{2,2}=\frac{16\pi^2}{9}>\pi^2$.

Note that the map $\Phi_{\alpha,\beta}:L^2(T_\alpha)\to L^2(T_\beta)$ given by
\[
\big(\Phi_{\alpha,\beta} u\big)(x_1,x_2)=u \Big( \dfrac{\cot \alpha}{\cot \beta}x_1,x_2\Big)
\]
is bijective from the form domain of $A^N_\alpha$ to that of $A^N_\beta$, and
\[
\dfrac{\| \nabla\Phi_{\alpha,\beta} u\|^2_{L^2(T_\beta)}}{\|\Phi_{\alpha,\beta} u\|^2_{L^2(T_\beta)}}
=\dfrac{\displaystyle\int_{T_\alpha} \bigg[\Big(\dfrac{\tan \beta}{\tan\alpha}\Big)^2 \Big(\dfrac{\partial u}{\partial x_1}\Big)^2
+\Big(\dfrac{\partial u}{\partial x_2}\Big)^2\bigg]\dd x
}{\|u\|_{L^2(T_\alpha)}},
\]
and it follows by the min-max principle that
\begin{equation}
        \label{eq-lab1}
\lambda_k(A^N_\beta)\ge \min \bigg\{\Big(\dfrac{\tan \beta}{\tan\alpha}\Big)^2,1\bigg\} \lambda_k(A^N_\alpha),  \quad
\quad k \in \NN.
\end{equation}
Hence, for $\alpha\ge \frac{\pi}{6}$ we obtain $\lambda_2(A^N_\alpha)\ge \lambda_2(A^N_{\frac \pi 6})>\pi^2$,
while for $\alpha<\frac{\pi}{6}$ we arrive at
\[
\lambda_2(A^N_\alpha) \ge \Big(\dfrac{\sqrt 3}{\cot \alpha}\Big)^2 
\lambda_2(A^N_{\frac \pi 6})= \dfrac{16 \pi^2}{3}\,\tan^2\alpha, 
\]
and $\lambda_2(A^N_\alpha)>\pi^2$ for $\tan \alpha >\frac{\sqrt 3}{4}$.
\end{proof}

Remark that our lower bound $\arctan \frac{\sqrt{3}}{4}\simeq 0.409\simeq 23.4^\circ$
for the existence of a unique discrete eigenvalue
improves the previously known value $\arctan \sqrt{0.4}\simeq 0.564\simeq 32.3^\circ$
obtained in \cite{naz12s}.
Anyway, our estimate is not expected to be optimal: the numerical
simulations~\cite{bind,naz12s} suggest that the second eigenvalue appears
for $\alpha\simeq 0.242\simeq 13.7^\circ$.

Note that in this specific example a more detailed result can obtained using
the monotonicity of the eigenvalues with respect to the angle.
Namely, denote $\cN(\alpha):=N(\Lambda_\alpha)$ the number of the discrete eigenvalues,
the function $\cN$ is then piecewise constant and non-increasing,
and $\cN(\alpha)$ tends to $\infty$ as $\alpha$ approaches $0$.
Hence, there exists an infinite sequence $\frac{\pi}{2}=\alpha_0>\alpha_1>\alpha_2>\dots$
such that $\cN$ is constant on each interval
$[\alpha_{n},\alpha_{n-1})$ but has a jump at each $\alpha_n$,  $n\in \NN$,
and $\alpha_1\le\arctan \frac{\sqrt{3}}{4}$ by Proposition~\ref{prop-br}.
A  modification of the proof of Theorem~\ref{thm1} presented in 
Appendix~\ref{proof10} gives then the following result:
\begin{prop}\label{prop10}
Assume that $\Lambda_\alpha$ admits a threshold resonance
for some $\alpha\in \big(0, \frac\pi 2\big)$,
then the counting function $\cN$ has a jump at $\alpha$.
\end{prop}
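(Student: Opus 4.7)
The plan is to adapt the proof of Theorem~\ref{thm1} by replacing the potential perturbation $-\gamma 1_\Omega$ used in Lemma~\ref{lem5} with a geometric one: decreasing $\alpha$ to some $\alpha'<\alpha$. Given a threshold resonance $u_0$ at $\alpha$, we shall construct $\cN(\alpha)+1$ linearly independent trial functions in $H^1_0(\Lambda_{\alpha'})$ whose Rayleigh quotients stay strictly below $\nu$; by the min-max principle this will force $\cN(\alpha')\ge \cN(\alpha)+1$, i.e.\ a jump at $\alpha$.

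The first part follows Lemma~\ref{lem5} verbatim: with $N:=\cN(\alpha)$ and an orthonormal family $u_1,\dots,u_N$ of eigenfunctions of $-\Delta^{\Lambda_\alpha}_D$, we form $v_0=\varphi u_0$ and $v_j=u_j$ for $j\ge 1$, where $\varphi(x)=\chi(|x|/R)$ is a radial cut-off of large scale~$R$. Then, for $\alpha'<\alpha$ close to $\alpha$, we design a smooth diffeomorphism $\Phi_{\alpha'}\colon \Lambda_\alpha\to \Lambda_{\alpha'}$ which reduces to a rigid motion on each branch outside a fixed bounded neighborhood of the center---this is possible because the branches of both $\Lambda_\alpha$ and $\Lambda_{\alpha'}$ are straight half-strips of unit width---and interpolates smoothly inside that neighborhood. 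Consequently, the pullback of the Euclidean metric under $\Phi_{\alpha'}$ coincides with the Euclidean metric outside a fixed bounded set and differs from it by $\cO(\alpha-\alpha')$ inside. The transported functions $\tilde v_j:=v_j\circ\Phi_{\alpha'}^{-1}$ lie in $H^1_0(\Lambda_{\alpha'})$.

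Computing the matrix of the pulled-back Rayleigh quotient on $\Span(\tilde v_0,\dots,\tilde v_N)$ exactly as in Lemma~\ref{lem5}, one expects to recover
\[
A_{\alpha'}=\diag(0,\lambda_1-\nu,\dots,\lambda_N-\nu)+\cO(R^{-1/2})-(\alpha-\alpha')\,C,
\]
where $C$ is a symmetric $(N+1)\times(N+1)$ matrix whose entries are integrals of $u_ju_k$ and $\nabla u_j\cdot\nabla u_k$ against smooth coefficients compactly supported in the transition region near the center. The analogue of the uniform bound $\langle\xi,B\xi\rangle\ge b>0$ of Lemma~\ref{lem5} is then the quantitative positivity $C_{00}\ge c>0$ with $c$ independent of~$R$; this plays the role of the fixed negative contribution $-\gamma b$. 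Granting it, we first fix $\alpha-\alpha'>0$ small and then pick $R$ large enough for the $\cO(R^{-1/2})$ errors to be dominated, obtaining $\sup_{|\xi|=1}\langle\xi,A_{\alpha'}\xi\rangle<0$; hence $\lambda_{N+1}(-\Delta^{\Lambda_{\alpha'}}_D)<\nu$ and $\cN(\alpha')\ge N+1$. The main obstacle is the quantitative uniform positivity $C_{00}\ge c>0$: we expect it to follow from a Hadamard-type first-variation formula for the Dirichlet Laplacian under the chosen geometric perturbation, combined with the strict monotonicity of the eigenvalues of $-\Delta^{\Lambda_\alpha}_D$ in $\alpha$ already invoked in the paper and with the unique continuation property for $u_0$ used in Lemma~\ref{lem1}, but careful bookkeeping of the boundary velocity induced by the angle change will be required to rule out cancellations.
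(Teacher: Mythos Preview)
Your overall plan---transport the eigenfunctions $u_1,\dots,u_N$ and a cut-off of the resonance $u_0$ from $\Lambda_\alpha$ to $\Lambda_{\alpha'}$ and run the min-max machinery of Lemma~\ref{lem5}---is exactly what the paper does. The real difficulty, which you correctly identify, is the sign of the first-order correction, and here your argument has a genuine gap. With your choice of diffeomorphism (a rigid motion on each branch, arbitrary smooth interpolation on a fixed compact piece), the pulled-back metric differs from the Euclidean one only on that compact piece, so $C_{00}$ is a local integral of $u_0,\nabla u_0$ against coefficients determined by the interpolation. Nothing forces this to be positive: different smooth interpolations produce different $C$'s, and the Hadamard formula you invoke does not help, both because $u_0$ is not an $L^2$-eigenfunction and because for a map that is isometric on the branches the boundary velocity $V\cdot n$ is supported in the transition zone and has no preferred sign there. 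The monotonicity of the discrete eigenvalues in $\alpha$ is a global statement about the full operator and does not localize to the piece where your perturbation lives, so it cannot be used to pin down the sign of $C_{00}$ either.

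The paper bypasses this by choosing the transformation \emph{not} to be an isometry on the branches. It uses the piecewise-affine map $\Phi_{\alpha,\beta}$ that, in the branch-adapted coordinates $(s,t)$ on each half $\Omega^\pm_\alpha$, rescales the longitudinal variable $s$ by the factor $\tan\beta/\tan\alpha$ and leaves $t$ fixed. This map is unitary on $L^2$, sends $H^1_0(\Lambda_\alpha)$ to $H^1_0(\Lambda_\beta)$, and produces the exact identity
\[
q_\beta(\Phi_{\alpha,\beta}u,\Phi_{\alpha,\beta}u)
=q_\alpha(u,u)-\gamma\sum_{\star\in\{+,-\}}\int_{\Omega^\star_\alpha}\big(\sigma^\star_\alpha\cdot\nabla u\big)^2\,\dd x,
\qquad \gamma=1-\Big(\tfrac{\tan\beta}{\tan\alpha}\Big)^2>0.
\]
Hence the correction matrix $B$ has entries $b_{jk}=\sum_\star\int(\sigma^\star\cdot\nabla u_j)(\sigma^\star\cdot\nabla u_k)$ and $\langle\xi,B\xi\rangle$ is a sum of squares, manifestly $\ge 0$. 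Strict positivity on the unit sphere is then obtained by a short argument: if $\langle\xi,B\xi\rangle=0$ on a neighbourhood of the corner, then $\sum_j\xi_j u_j$ is constant along $\sigma^\pm_\alpha$ on each half, and matching the values and the normal derivatives across the interface $\{x_2=0\}$ forces this function to be constant, hence zero by the Dirichlet condition, whence $\xi=0$ by Lemma~\ref{lem1}. The moral is that the ``right'' deformation is an anisotropic scaling acting on the whole domain rather than a compactly supported metric perturbation; it is precisely this global, non-isometric choice that makes the sign of the correction automatic.
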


In other words, there is just a discrete (but infinite) family of critical angles for which the existence of threshold resonances
is possible. Remark that such a picture is typical
for problems with threshold resonances, cf. \cite{rauch},
and it appears in other problems governed by geometric parameters,
see e.g.~\cite{bor1,bor2,naz-t}.

\subsection{T- and Y-junctions}

\begin{figure}
\centering
\begin{minipage}{65mm}
\begin{center}
\includegraphics[height=20mm]{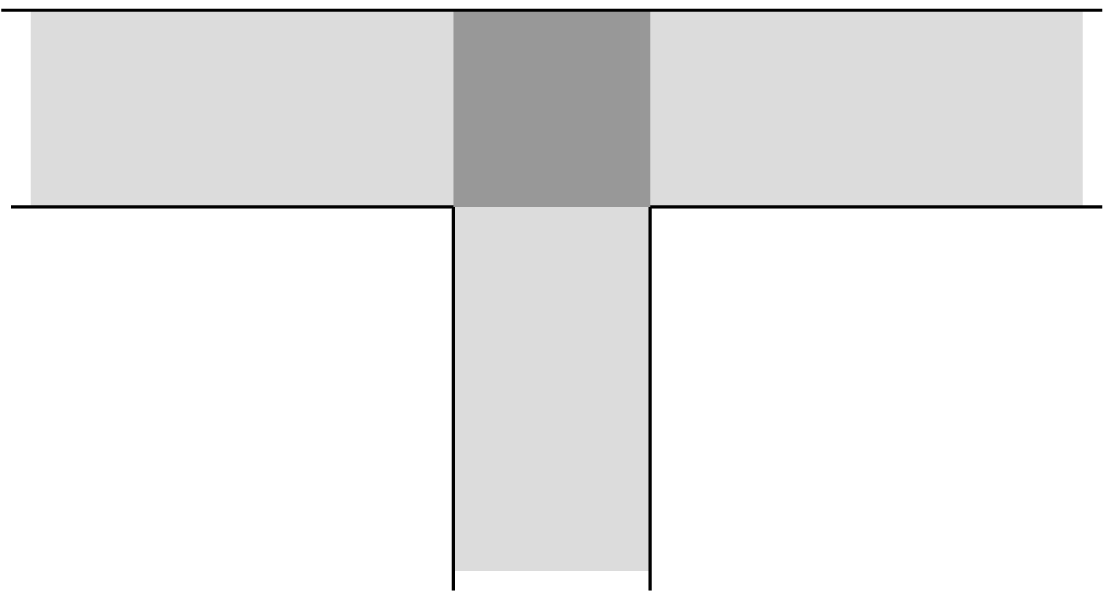}\\
(a)
\end{center}
\end{minipage}
\begin{minipage}{65mm}
\begin{center}
\includegraphics[height=25mm]{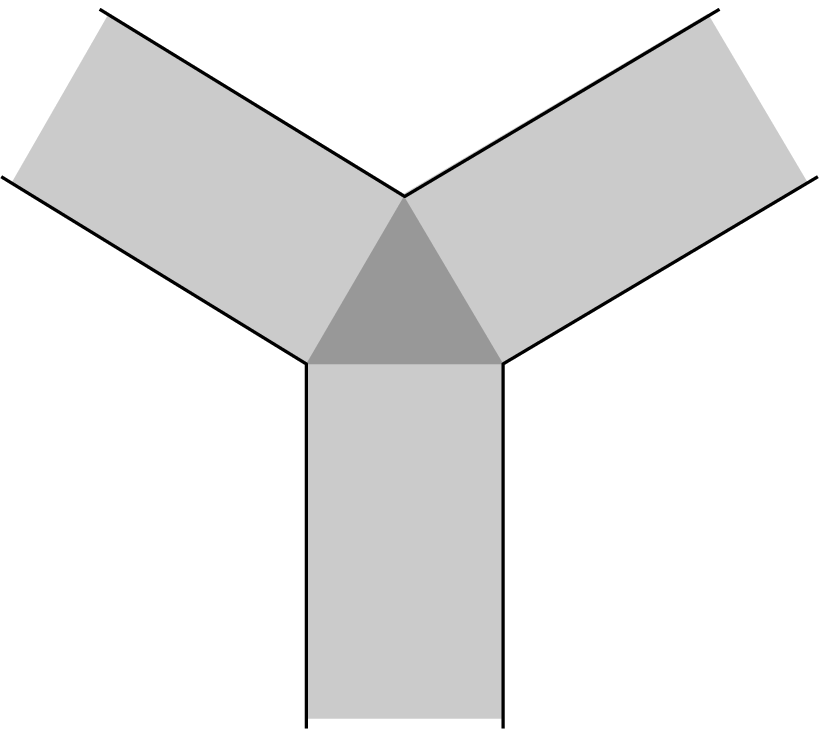}\\
(b)
\end{center}
\end{minipage}
\caption{(a) $T$-shaped waveguide $\Lambda_T$.
(b) $Y$-shaped waveguide $\Lambda_Y$.\label{fig-ty}}
\end{figure}

The $T$-junction $\Lambda_T$ represents three copies of the half-strip
$\RR_+\times(0,1)$ attached to three sides of a unit square, while
the $Y$-junction $\Lambda_Y$ is obtained from three copies of the same half-strip 
attached
to the three sides of an equalateral triangle of unit side length,
see Figure~\ref{fig-ty}, and the absense of threshold resonances
for the two configurations was already obtained in \cite{naz-t,naz-hex}.
For illustrative purposes, let us repeat the respective constructions.
For the both cases we have $\nu=\pi^2$, and the presence
of the discrete spectrum follows from the domain monotonicity
by comparing with the broken waveguides (see subsection~\ref{ss-brok})
with $\alpha=\frac{\pi}{4}$ for $\Lambda_T$ and $\alpha=\frac{\pi}{3}$ for $\Lambda_Y$.
For $\Lambda_T$, the operator $-\Delta^C_{DN}$ is the Laplacian on the unit square
with the Dirichlet boundary condition on one side and the Neumann boundary condition
on the other three sides. The separation of variables shows that
$\lambda_2(-\Delta^C_{DN})=\frac{5\pi^2}{4}>\nu$, and  Corollary~\ref{cor1} gives the result.
For $\Lambda_Y$, the operator $-\Delta^C_{DN}$ is the Neumann Laplacian
in the equilateral triangle of unit side length, and its second eigenvalue
is $\frac{16\pi^2}{9}>\nu$, see \cite{prag}, and we are again in the situation
of Corollary~\ref{cor1}.

Using a construction similar to the one used in the proof of Proposition~\ref{prop-br}
one can consider a more general class of domains starting either with $\Lambda_T$
or with $\Lambda_Y$. Namely, for $\theta\in \RR$ denote by $L_\theta$ the ray
$\RR_+ (\cos \theta,\sin \theta)$.
For $\alpha\in(0,\frac{\pi}{2}]$ consider the union of three rays
$Y_\alpha:=L_{-\frac \pi 2}\mathop{\cup}L_{\frac \pi 2 - \alpha}\mathop{\cup} L_{\frac \pi 2 + \alpha}$
and denote by $\Lambda_{Y,\alpha}$ its $\frac{1}{2}$-neighborhood, see Figure~\ref{fig-ya}.
Remark that for $\alpha=\frac{\pi}{3}$ and $\alpha=\frac{\pi}{2}$
we obtain respectively the above sets $\Lambda_Y$ and $\Lambda_T$.

\begin{prop}
Denote $\alpha_1:=\arccos (\sqrt{13}-3)\simeq 52,7^\circ$
and $\alpha_2:=\arctan\frac{4}{\sqrt{3}}\simeq 66,6^\circ$,
then for $\alpha\in (\alpha_1,\alpha_2)$ the Dirichlet
Laplacian in $\Lambda_{Y,\alpha}$ has a unique discrete eigenvalue and no threshold resonance.
\end{prop}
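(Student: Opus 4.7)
The plan is to verify the two hypotheses of Corollary~\ref{cor1}. For the existence of at least one discrete eigenvalue, observe that for any $\alpha\in(0,\pi/2]$ the union of any two of the three branches of $\Lambda_{Y,\alpha}$, together with the piece of the junction joining them, is isometric to a broken waveguide $\Lambda_{\beta(\alpha)}$ of subsection~\ref{ss-brok} for a suitable $\beta(\alpha)\in(0,\pi/2)$. Broken waveguides always carry a discrete eigenvalue below $\pi^2$, so by the domain monotonicity of the Dirichlet Laplacian $\lambda_1(-\Delta^{\Lambda_{Y,\alpha}}_D)<\pi^2$.

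The main work is to exhibit a center $C_\alpha$ with $\lambda_2(-\Delta^{C_\alpha}_{DN})>\pi^2$. I would take $C_\alpha$ to be the polygonal junction region, chosen so as to be symmetric under the reflection $x_1\mapsto -x_1$ (a symmetry of $\Lambda_{Y,\alpha}$). Decomposing $L^2(C_\alpha)$ into symmetric and antisymmetric parts yields a unitary equivalence $-\Delta^{C_\alpha}_{DN}\cong A^N_\alpha\oplus A^D_\alpha$ on the half-center $C_\alpha^+=C_\alpha\cap\{x_1>0\}$, with the original boundary conditions and an extra Neumann (respectively Dirichlet) condition on the axis $\{x_1=0\}$. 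It is then enough to prove $\lambda_1(A^D_\alpha)>\pi^2$ and $\lambda_2(A^N_\alpha)>\pi^2$.

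For the antisymmetric part $A^D_\alpha$ I would use one-dimensional Poincaré inequalities on vertical or horizontal fibres of $C_\alpha^+$, mirroring the $A^D_\alpha$ argument in the proof of Proposition~\ref{prop-br}: along each fibre a test function vanishes at an endpoint, either because of the Dirichlet axis or because of the outer Dirichlet part of $\partial C_\alpha$, so $\int (\partial_i u)^2\,\dd x\ge (\pi/\ell)^2\int u^2\,\dd x$ with $\ell$ the fibre length, and the geometry controls $\ell$ in terms of $\alpha$. For the symmetric part $A^N_\alpha$ I would bootstrap from $\alpha=\pi/3$, where $C_{\pi/3}$ is the equilateral triangle of unit side with purely Neumann boundary and an adaptation of the unfolding argument of Proposition~\ref{prop-br} produces an explicit lower bound on $\lambda_2(A^N_{\pi/3})$ via the known Dirichlet eigenvalues of an equilateral triangle of suitable size. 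An affine change of variables $\Phi_{\alpha,\pi/3}\colon C_\alpha^+\to C_{\pi/3}^+$ rescaling one coordinate then gives the min-max bound
\[
\lambda_k(A^N_\alpha)\ge \min\bigl\{c(\alpha)^{-2},\,1\bigr\}\,\lambda_k(A^N_{\pi/3}),\qquad k\in\NN,
\]
with an explicit factor $c(\alpha)$, exactly in the spirit of \eqref{eq-lab1}. Requiring the right-hand side for $k=2$ to exceed $\pi^2$, and combining with the estimate on $A^D_\alpha$, produces a pair of inequalities on $\alpha$ whose solution is precisely $(\alpha_1,\alpha_2)$.

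The main obstacle will be setting up the geometry: one has to pinpoint a center $C_\alpha$ that respects the reflection symmetry \emph{and} admits an affine map onto $C_{\pi/3}^+$, while keeping the labelling of the Dirichlet and Neumann portions of $\partial C_\alpha^+$ consistent across the range of $\alpha$ under consideration. Once these geometric choices are locked in, the spectral estimates themselves are elementary, and the explicit values $\alpha_1=\arccos(\sqrt{13}-3)$ and $\alpha_2=\arctan(4/\sqrt{3})$ drop out of solving the resulting inequalities.
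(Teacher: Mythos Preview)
Your overall strategy (apply Corollary~\ref{cor1}; get the discrete eigenvalue from a broken-waveguide comparison; then show $\lambda_2(-\Delta^{C_\alpha}_{DN})>\pi^2$) matches the paper's. The gap is in how you propose to control $\lambda_2$. You want to halve $C_\alpha$ by the symmetry axis and then transport $A^N_\alpha$ to $A^N_{\pi/3}$ by an affine map $\Phi_{\alpha,\pi/3}:C_\alpha^+\to C_{\pi/3}^+$, as in Proposition~\ref{prop-br}. But this cannot be set up: for $\alpha=\pi/3$ the minimal center is the equilateral triangle, so $C_{\pi/3}^+$ is a triangle with purely Neumann boundary, whereas for every $\alpha\ne\pi/3$ the minimal center is a (convex or concave) pentagon, so $C_\alpha^+$ is a quadrilateral carrying a genuine Dirichlet piece on $\partial\Lambda$. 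No affine map sends a quadrilateral onto a triangle, and enlarging the center by adjoining pieces of the branches only adds rectangles, so you cannot manufacture a triangular center either. The ``geometric choice'' you flag as the main obstacle therefore does not exist in the form you need, and the bootstrap inequality of the type \eqref{eq-lab1} for $A^N_\alpha$ has no starting point.

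The paper avoids the symmetry decomposition altogether and instead compares $-\Delta^{C_\alpha}_{DN}$ directly with a pure Neumann Laplacian on a larger domain: extending the three Neumann sides of the pentagon $C_\alpha$ one obtains an isosceles triangle $M\supset C_\alpha$ (different shapes for $\alpha<\pi/3$ and $\alpha>\pi/3$), and the elementary monotonicity $\lambda_k(-\Delta^{C_\alpha}_{DN})\ge\lambda_k(-\Delta^{C_\alpha}_N)\ge\lambda_k(-\Delta^M_N)$ reduces the problem to the Neumann spectrum of $M$. Now $M$ \emph{is} a triangle for every $\alpha$, so a single-axis contraction sends it to an equilateral triangle with explicitly known Neumann eigenvalue $\lambda_2=\tfrac{4\pi^2}{3h^2}$; the bounds $\alpha_1=\arccos(\sqrt{13}-3)$ and $\alpha_2=\arctan(4/\sqrt 3)$ then come out of the two cases. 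The key idea you are missing is this passage to the circumscribed triangle with full Neumann boundary conditions, which replaces the unattainable affine map between half-centers.
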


\begin{proof}
We are going to apply  Corollary~\ref{cor1} again. The existence of a non-empty
discrete spectrum follows again by comparing with the broken waveguides.
To study the eigenvalues $\lambda_2(-\Delta^C_{DN})$ we distinguish
between the cases $\alpha<\frac \pi3$
and $\alpha>\frac \pi 3$.

\begin{figure}[t]
\centering
\begin{tabular}{cc}
\begin{minipage}[c]{70mm}
\begin{center}
\includegraphics[height=45mm]{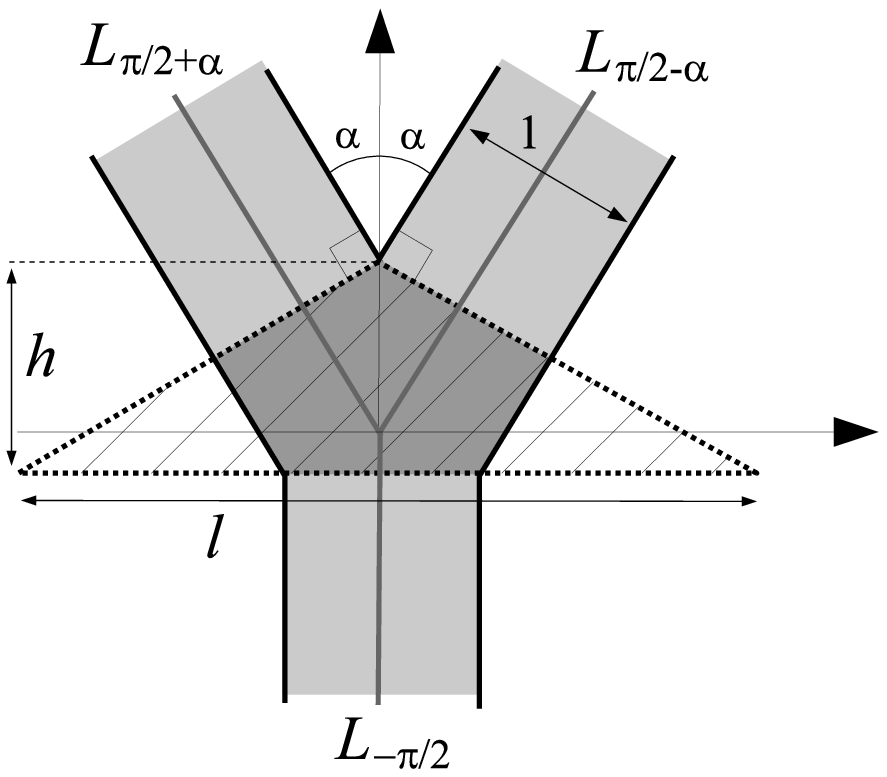}
\end{center}
\end{minipage}
&
\begin{minipage}[c]{70mm}
\begin{center}
\includegraphics[height=40mm]{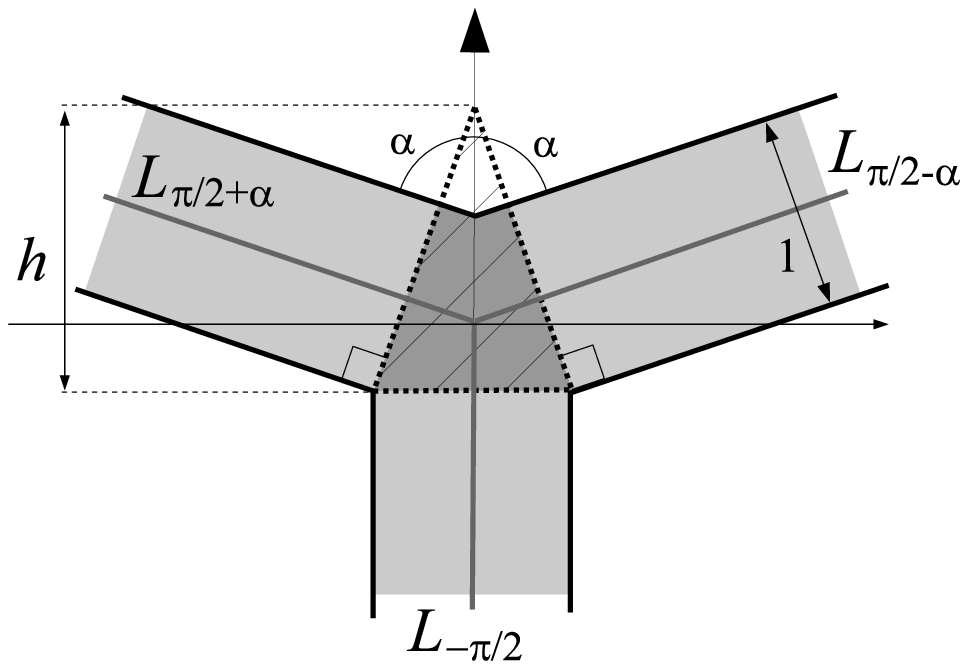}
\end{center}
\end{minipage}\\
(a) & (b)
\end{tabular}
\caption{The domain $\Lambda_{Y,\alpha}$ for (a) $\alpha<\frac \pi 3$ and (b) $\alpha>\frac \pi 3$. The center $C$
is dark-shaded and the triangle $M$ is hatched. \label{fig-ya}}
\end{figure}

Let $\alpha<\frac\pi 3$, then the smallest possible center $C$ is a convex pentagon.
By extending the three sides at which the Neumann boundary condition for $-\Delta^C_{DN}$
is imposed we obtain
an isosceles triangle $M$  with the base length $l$ and the height $h$  given by
\[
l=\dfrac{(2-\cos\alpha)\cos\alpha}{\sin^2\alpha},
\quad
h=\dfrac{2-\cos\alpha}{2\sin\alpha},
\]
see Figure~\ref{fig-ya}(a), and by the min-max principle we have the inequality
$\lambda_k(-\Delta^C_{DN})\ge \lambda_k(-\Delta^M_N)$, $k\in\NN$, where $-\Delta^M_N$
is the Neumann Laplacian in $M$.
Remark that $\frac{l}{h}=2\cot \alpha>\frac{2}{\sqrt 3}$, while the last value is
 the base/height ratio for the equilateral triangles. Therefore, by applying the  contraction
with the coefficient $\sqrt 3 \cot\alpha$
along the $x_1$-axis
 we obtain an equilaterial triangle $\Omega$ of height $h$, and,
similarly to \eqref{eq-lab1}, one has
\[
\lambda_k(-\Delta^M_N)\ge \Big(\dfrac{1}{\sqrt 3 \cot \alpha}\Big)^2\lambda_k(-\Delta^\Omega_N), \quad k\in\NN.
\]
As $\lambda_2(-\Delta^\Omega_N)=\frac{4\pi^2}{3h^2}$, see \cite{prag}, we arrive at
\[
\lambda_2(-\Delta^C_{DN})\ge\dfrac{16 \pi^2\sin^4 \alpha}{9\cos^2\alpha(2-\cos\alpha)^2}=:\lambda(\alpha),
\]
and solving the inequality $\lambda(\alpha)>\pi^2$ gives the
sought lower bound for $\alpha$.

Now let $\alpha>\frac \pi 3$, then the smallest possible center $C$
is a concave pentagon, and extending
the Neumann sides one obtains an isosceles triangle $M$ with a unit base and the height $h=\frac 12 \tan\alpha>\frac{\sqrt{3}}{2}$,
and the contraction along the $x_2$ axis with the coefficient $\frac{1}{\sqrt 3}\,\tan \alpha$ transforms $M$
into an equilateral triangle $\Omega_0$ of unit side length. As in  \eqref{eq-lab1} we have then
\[
\lambda_2(-\Delta^C_{DN})\ge \lambda_2(-\Delta^M_N)\ge \Big(\dfrac{\sqrt 3}{\tan\alpha}\Big)^2\lambda_2(-\Delta^{\Omega_0}_N)
=\dfrac{16 \pi^2 }{3 \tan^2 \alpha},
\]
and $\lambda_2(-\Delta^C_{DN})>\pi^2$ for $\tan \alpha< \frac{4}{\sqrt{3}}$, which gives the upper bound.
\end{proof}

\subsection{Crossing strips}

\begin{figure}
\centering
\centering
\begin{tabular}{cc}
\begin{minipage}[c]{65mm}
\begin{center}
\includegraphics[height=35mm]{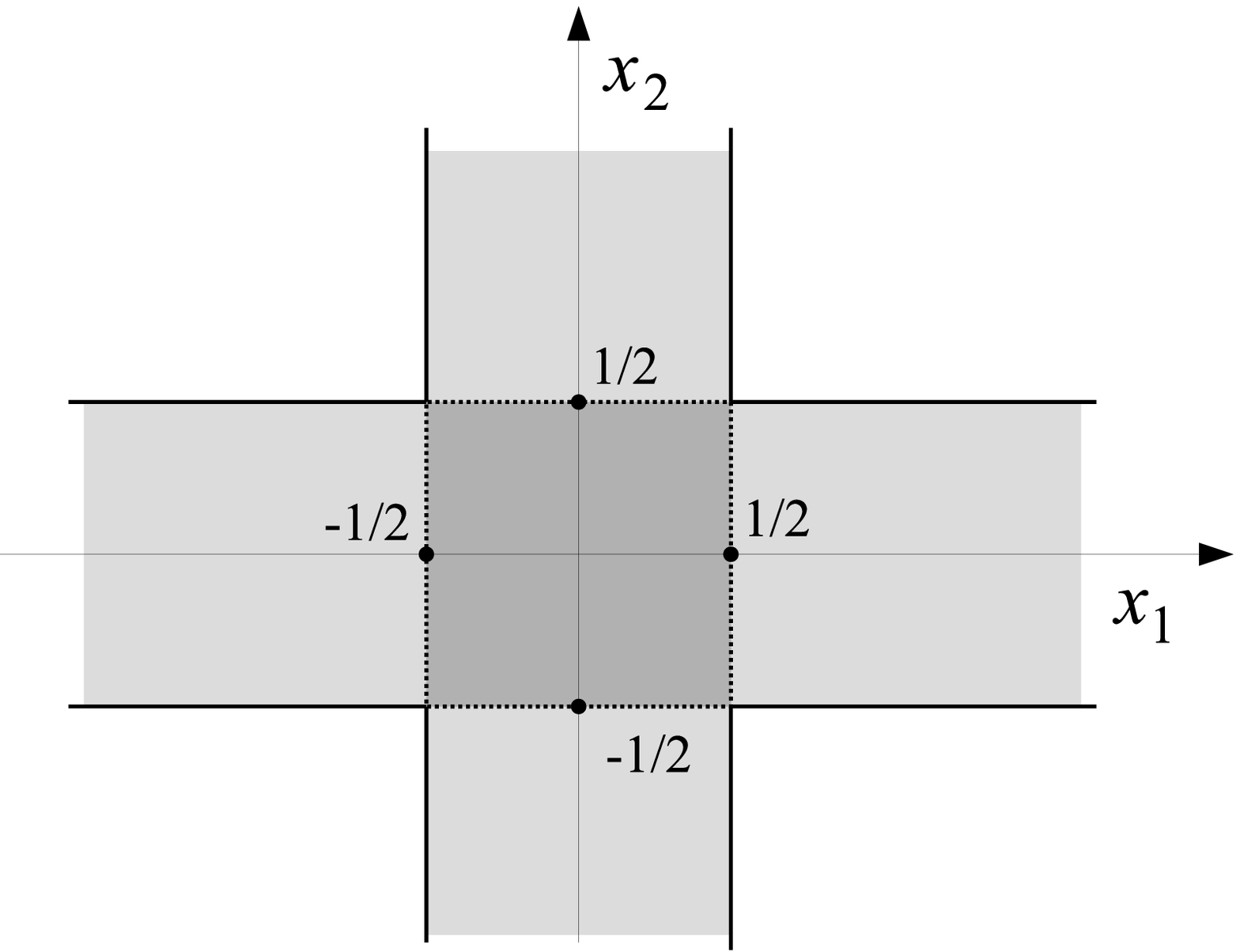}
\end{center}
\end{minipage}
&
\begin{minipage}[c]{65mm}
\begin{center}
\includegraphics[height=40mm]{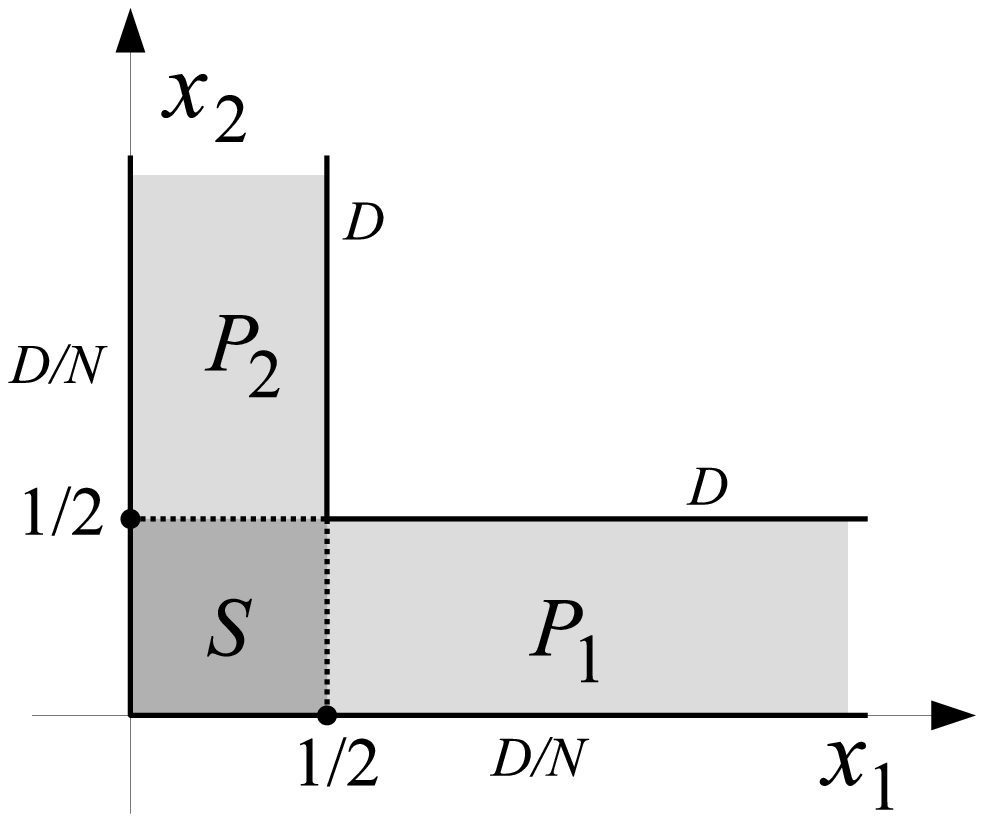}
\end{center}
\end{minipage}\\
(a) & (b)
\end{tabular}
\caption{ (a) The domain $\Lambda_\times$ with a dark-shaded center.
(b) The domain $\Pi$ decomposed into the square $S$ and two half-infinite strips $P_1$
and $P_2$. \label{fig-cross}}
\end{figure}

Consider the domain $\Lambda_\times:=\big((-\frac 1 2,\frac 1 2)\times\RR \big)\mathop{\cup}\big(
\RR\times(-\frac 1 2, \frac 1 2)\big)$, see Figure~\ref{fig-cross}(a). It can be viewed as four copies 
on the half-infinite strip $(0,1)\times\RR_+$ attached to the four sides of a unit square,
and we have again $\nu=\pi^2$.

\begin{prop}\label{prop13}
The Dirichlet Laplacian in $\Lambda_\times$ has a single discrete eigenvalue
and no threshold resonance.
\end{prop}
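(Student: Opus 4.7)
The plan is to invoke Corollary~\ref{cor1} after exploiting the $\mathbb{Z}_2\times\mathbb{Z}_2$ reflection symmetry of $\Lambda_\times$ across the two coordinate axes. I would start by observing that a translate of the $T$-junction $\Lambda_T$ from the previous subsection fits inside $\Lambda_\times$ (remove any one of the four branches), so Dirichlet domain monotonicity combined with the bound state of $-\Delta^{\Lambda_T}_D$ gives $\lambda_1(-\Delta^{\Lambda_\times}_D)<\pi^2$; Neumann bracketing across the four interfaces between the central square $S=(-\tfrac12,\tfrac12)^2$ and the branches then forces $N(\Lambda_\times)=1$, with the unique bound state living in the fully symmetric sector $(+,+)$ of the action.

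Splitting $-\Delta^{\Lambda_\times}_D$ into the four sectors and applying the natural mixed-boundary-condition version of Corollary~\ref{cor1} and Proposition~\ref{prop2} (per the concluding remark of the introduction) on the L-shaped first-quadrant part of $\Lambda_\times$, three of the four sectors are easy. In sector $(+,+)$ the restricted quarter-square center $(0,\tfrac12)^2$ has all-Neumann boundary conditions with $\lambda_2=4\pi^2>\pi^2$, and Corollary~\ref{cor1} applies. In sector $(-,-)$ the sector's essential spectrum shifts up to $4\pi^2$ (since each branch then has Dirichlet on both transverse sides of width $\tfrac12$), and Neumann bracketing gives $\lambda_1\geq 2\pi^2>\pi^2$, ruling out any bounded solution at $\pi^2$ in this sector. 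The delicate cases are the two mixed sectors $(+,-)$ and $(-,+)$, where the essential spectrum still starts at $\pi^2$ and the quarter-square center yields $\lambda_1=\pi^2$ exactly (attained by $\sin(\pi x_2)$ or $\sin(\pi x_1)$, respectively), so strict inequality fails. Here I would enlarge the restricted center by short branch-stubs of length $\epsilon>0$, producing a small L-shape, and rule out $\pi^2$ as an eigenvalue of the enlarged center via a matching argument at the two new interfaces: on the shared interface, the candidate transverse ground states (e.g.\ $\sin(\pi x_2)$, constant in $x_1$, on the central side and $\cos(\pi x_1)$, constant in $x_2$, on the adjacent vertical stub) are linearly independent, forcing any would-be eigenfunction at $\pi^2$ to vanish identically. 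Proposition~\ref{prop2} then closes the argument in these sectors.

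The main obstacle is precisely this matching step in the two mixed sectors: Neumann bracketing only yields the non-strict bound $\lambda_1\geq\pi^2$, and the two-dimensional linear-independence observation must be combined with exponential decay estimates for the infinitely many higher transverse modes on the branch-stubs, whose bracketed eigenvalues can accumulate near $\pi^2$ from above as $\epsilon$ grows. The cleanest resolution is a Dirichlet-to-Neumann reformulation on the two new interfaces, verifying that the sum of the two DtN operators has trivial kernel at energy $\pi^2$; this promotes the bracketed bound to the strict inequality required by Proposition~\ref{prop2} and completes the sector-wise argument.
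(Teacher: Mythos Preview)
Your overall strategy coincides with the paper's: both decompose $-\Delta^{\Lambda_\times}_D$ via the $\mathbb{Z}_2\times\mathbb{Z}_2$ symmetry into four operators $A_{jk}$ on the L-shaped quarter $\Pi$, and both dispose of the $(+,+)$ and $(-,-)$ sectors essentially as you describe. The difference lies in the two mixed sectors. The paper does \emph{not} try to squeeze a strict inequality out of Proposition~\ref{prop2} with an enlarged center; instead it re-runs the perturbation machinery of Section~\ref{s1} directly on each $A_{jk}$ (this is Lemma~\ref{lem-apr}) and exploits the freedom in the location of $\Omega$: in sector $(1,0)$ one of the two half-strips, say $P_2$, has transverse ground state $4\pi^2$, so placing $\Omega\subset P_2$ leaves the Neumann-bracketed operator $A^N_{10}-\gamma 1_\Omega$ with no eigenvalue below $\pi^2$ for small $\gamma$, contradicting Lemma~\ref{lem-apr}. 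This completely sidesteps the borderline equality $\lambda_1(M^0_{10})=\pi^2$ on the quarter-square.

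Your route via an enlarged center also works, but you are making it harder than it is. No DtN map or exponential-decay bookkeeping is needed: once Neumann bracketing gives $\lambda_1(-\Delta^{C_\epsilon}_{DN})\ge\pi^2$, assume equality and note that the eigenfunction must saturate the Rayleigh-quotient bound on each of the three pieces. This forces $u\equiv 0$ on the horizontal stub (lowest eigenvalue $4\pi^2$), $u|_S=c\sin(\pi x_2)$, and $u|_{\text{vert}}=d\cos(\pi x_1)$. Matching $H^1$ traces across $x_2=\tfrac12$ gives $c=d\cos(\pi x_1)$ for all $x_1\in(0,\tfrac12)$, hence $c=d=0$; so $\lambda_1>\pi^2$ and Proposition~\ref{prop2} applies. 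In short: the paper's choice of perturbation location is the slicker device, but your enlarged-center idea closes with a two-line trace-mismatch argument rather than a DtN analysis.
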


The rest of the subsection is dedicated to the proof.
As in the preceding examples, the existence of discrete eigenvalues
follows by comparing with broken waveguides.
Remark that the operator $-\Delta^C_{DN}$ is simply the Neumann Laplacian on the unit square,
and its second eigenvalue is $\pi^2=\nu$, and $-\Delta^\Lambda_D$
cannot have more than one discrete eigenvalue due to \eqref{eq-lnu2}. On the other hand, as the strict inequality
$\lambda_2(-\Delta^C_{DN})>\nu$
is not satisfied, the absence of threshold resonances does not follow directly
from Corollary~\ref{cor1}. We are going to show that the arguments can be
modified in order to cover $\Lambda_\times$.

Assume by contradiction that there is a non-trivial bounded solution $w$
to $-\Delta w=\pi^2 w$ in $\Lambda_\times$ vanishing at the boundary.
For $j,k\in\{0,1\}$ consider the functions $w_{jk}$ defined by
\[
w_{jk}(x_1,x_2)=w(x_1,x_2)+(-1)^j w(-x_1,x_2)+(-1)^k w(x_1,-x_2)
+(-1)^{j+k}w(-x_1,-x_2).
\]
Each of these four functions is a bounded solution to $-\Delta u=\pi^2 u$
in the domain $\Pi:=\big((0,\frac 12)\times\RR_+ \big)\mathop{\cup}\big(
\RR_+\times(0,\frac 12)\big)$, see Figure~\ref{fig-cross}(b),
vanishing at $\partial\Lambda_\times\mathop{\cap}\partial\Pi$
and satisfying the following boundary conditions at the remaining part of the boundary:
\begin{equation}
      \label{eq-ajk}
\begin{aligned}
(j,k)=(0,0):& \text{ Neumann on } \{0\}\times \RR_+ \text{ and } \RR_+\times\{0\},\\
(j,k)=(1,0):& \text{ Dirichlet on } \{0\}\times \RR_+ \text{ and Neumann on } \RR_+\times\{0\},\\
(j,k)=(0,1):& \text{ Neumann on } \{0\}\times \RR_+ \text{ and Dirichlet on } \RR_+\times\{0\},\\
(j,k)=(1,1):& \text{ Dirichlet on } \{0\}\times \RR_+ \text{ and } \RR_+\times\{0\}.
\end{aligned}
\end{equation}
Furthermore, at least one of $w_{jk}$ is not identically zero.
Let $A_{jk}$ be the Laplacian in $L^2(\Pi)$ with the Dirichlet boundary condition
at $\partial\Lambda_\times\mathop{\cap}\partial\Pi$ and with the boundary
conditions~\eqref{eq-ajk} on $\partial\Pi\setminus\partial\Lambda_\times$ and denote
by $N_{jk}$ the number of discrete eigenvalues of $A_{jk}$ in $(0,\pi^2)$.
The Dirichlet Laplacian in $\Lambda_\times$
is then unitarily equivalent to the direct sum of $A_{jk}$,
and one has $\sum_{j,k=0}^1 N_{jk}=N(\Lambda_\times)=1$.
Proceeding literally as in Lemma~\ref{lem5} one proves
the following assertion:

\begin{lemma}\label{lem-apr}
If $w_{jk}$ is not identically zero, then for any
non-empty bounded open subset $\Omega$ of $\Pi$ and any
$\gamma>0$ the operator  $A_{jk}-\gamma 1_\Omega$
has at least $N_{jk}+1$ eigenvalues in $(0,\pi^2)$.
\end{lemma}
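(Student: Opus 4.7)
The plan is to follow the proof of Lemma~\ref{lem5} nearly verbatim, with the substitutions $\Lambda\rightsquigarrow\Pi$, $-\Delta^\Lambda_D\rightsquigarrow A_{jk}$, $N(\Lambda)\rightsquigarrow N_{jk}$, $\nu\rightsquigarrow\pi^2$, and the role of the threshold-resonance function played by $w_{jk}$. Since the perturbation $-\gamma 1_\Omega$ is compactly supported and bounded below, it does not affect the essential spectrum, and the min-max principle reduces the task to producing an $(N_{jk}+1)$-dimensional subspace of the form domain of $A_{jk}$ on which the quadratic form of $A_{jk}-\gamma 1_\Omega$ has Rayleigh quotient strictly below $\pi^2$.

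Accordingly, I would set $u_0:=w_{jk}$, pick an $L^2(\Pi)$-orthonormal family $u_1,\dots,u_{N_{jk}}$ of eigenfunctions of $A_{jk}$ corresponding to its eigenvalues in $(0,\pi^2)$, and establish the direct analogue of Lemma~\ref{lem1} for $u_0,\dots,u_{N_{jk}}$: applying a product of operators $(-\Delta-\mu)$ to a non-trivial vanishing linear combination on an open subset reduces matters to a single eigenspace, and the unique continuation principle on the connected open set $\Pi$ then forces all coefficients to be zero (each $u_j$ is real-analytic in the interior of $\Pi$, which is all that is needed). With a radial cut-off $\varphi(x)=\chi(|x|/R)$ in hand I would set $v_0=\varphi u_0$ and $v_j=u_j$ for $j\ge 1$; these live in the form domain of $A_{jk}$ because multiplication by the smooth function $\varphi$ preserves the Dirichlet condition on $\partial\Lambda_\times\cap\partial\Pi$, while the Neumann parts impose no constraint on form-domain elements.

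The only calculation that genuinely requires checking is the identity $a_{00}=\int_\Pi|\nabla\varphi|^2 u_0^2\,\dd x$ and its counterpart for $a_{j0}$. Applying Green's identity to $\int_\Pi\varphi^2\,\nabla\cdot(u_0\nabla u_0)\,\dd x$ produces a boundary term $\int_{\partial\Pi}\varphi^2 u_0\,(\partial u_0/\partial n)\,\dd S$ which vanishes in pieces: on $\partial\Lambda_\times\cap\partial\Pi$ one has $u_0=0$, whereas on the remaining portion of $\partial\Pi$ the function $u_0$ satisfies the Neumann condition prescribed by~\eqref{eq-ajk}, so $\partial u_0/\partial n=0$ there. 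The same splitting disposes of the boundary contributions appearing in $a_{j0}$. Once this is verified, the decay bounds $a_{00}=\cO(R^{-1})$ and $a_{j0}=\cO(R^{-1/2})$, together with the compactness-based lower bound $b>0$ for the Gram matrix $B$, carry over word for word and yield the analogue of \eqref{eq-mm2}.

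I do not anticipate any serious obstacle: the only places where Lemma~\ref{lem5} exploits specific features of $\Lambda$ are precisely in those boundary terms, and they remain harmless here thanks to the mixed Dirichlet/Neumann structure built into $A_{jk}$.
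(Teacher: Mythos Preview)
Your proposal is correct and follows essentially the same approach as the paper, which simply states that one proceeds ``literally as in Lemma~\ref{lem5}''. You have in fact gone further than the paper by explicitly identifying the one place requiring care---the boundary terms in the integrations by parts---and correctly verifying that they vanish thanks to the Dirichlet/Neumann structure of $A_{jk}$ on $\partial\Pi$.
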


In addition, denote by $A^N_{jk}$ the Laplacian in $L^2(\Pi)$ with the same boundary
condition as $A_{jk}$ and an additional Neumann boundary condition
at the lines $x_1=\frac 1 2$ and $x_2=\frac 1 2$, i.e. on the dash lines in Figure~\ref{fig-cross}(b),
then $A_{jk}\ge A^N_{jk}$.
Furthermore, $A^N_{jk}=M^0_{jk}\oplus M^1_{jk}\oplus M^2_{jk}$,
where $M^0_{jk}$, $M^1_{jk}$, $M^2_{jk}$ are Laplacians with suitable
boundary conditions in respectively
the square $S:=(0,\frac 1 2)^2$ and the half-strips $P_1:=(\frac 12,\infty)\times(0,\frac 1 2)$
and $P_2:=(0,\frac 1 2)\times(\frac 12,\infty)$, and each
$M^s_{jk}$ admits a separation of variables.
Due to the inequality $A_{jk}-\gamma 1_\Omega\ge A^N_{jk}-\gamma 1_\Omega$
it is sufficient to construct, for each combination $(j,k)$,
an non-empty bounded open set $\Omega_{jk}\subset \Pi$
such that
\begin{equation}
   \label{eq-crr}
\text{	$A^N_{jk}-\gamma 1_{\Omega_{jk}}$ has exactly $N_{jk}$
eigenvalues in $(0,\pi^2)$ as $\gamma>0$ is sufficiently small.}
\end{equation}
Let $(j,k)=(1,1)$, then $M^0_{11}\ge 2\pi^2$, $M^1_{11}\simeq M^2_{11}\ge 4\pi^2$,
and $A^N_{11}\ge 2\pi^2$, hence, $N_{11}=0$. Therefore, any $\Omega_{11}\subset \Pi$
satisfies \eqref{eq-crr}. For $(j,k)=(1,0)$ we have $M^0_{10}\ge \pi^2$,
$M^1_{10}\ge \pi^2$, $M^2_{10}\ge 4\pi^2$, $N_{10}=0$ and
Eq.~\eqref{eq-crr} is satisfied for any $\Omega_{10}\subset P_2$.
In the same way, $N_{01}=0$, and Eq.~\eqref{eq-crr} holds for $(j,k)=(0,1)$
with any $\Omega_{01}\subset P_1$.
Finally, for $(j,k)=(0,0)$ we have $N_{11}=0$,
$M^1_{00}\simeq M^2_{00}\ge \pi^2$
and $\lambda_2(M^0_{00})=4\pi^2 >\pi^2$.
Therefore, Eq.~\eqref{eq-crr} holds with $\Omega_{00}=S$.

The combination of Lemma~\ref{lem-apr} with \eqref{eq-crr}
gives Proposition~\ref{prop13}.

\subsection{Configuration with several discrete eigenvalues}\label{ssmany}

The main difficulty in the use of Theorem~\ref{thm1} is that
it requires the exact knowledge of the quantity $N(\Lambda)$.
The analysis of the preceding examples was covered
by Corollary~\ref{cor1} due to the equality $N(\Lambda)=1$.
Let us give an example of a configuration $\Lambda$ with $N(\Lambda)=2$
for which the application of Theorem~\ref{thm1} is still possible.

For $a> 0$ and $b>2$, denote $\Pi_{a,b}:=(0,a)\times (0,b)$.
Let $\Lambda\equiv\Lambda_{a,b}$ be the star waveguide obtained by attaching
two copies of the half-strip $\RR_+\times (0,1)$
to a side of length $b$ of $\Pi_{a,b}$, see Figure~\ref{figell}(a).
The exact position of the two branches
along the side is not important, they
are only assumed non-intersecting. We have obviously $\nu=\pi^2$.

Take as a center $C:=\Pi_{a,b}$.
Let $A$ be the Laplacian in $\Pi_{a,b}$ with the Neumann
boundary condition on a side of length $b$
and with the Dirichlet boundary conditions on the other three sides.
Furthermore, let $B$ be the Dirichlet Laplacian in $\Pi_{a,b}$. 
Using the min-max principle we have then the following observations:
\begin{itemize}
\item if for some $j\in\NN$ one has $\lambda_j(B)<\nu$, then $N(\Lambda)\ge j$,
\item for any $j\in\big\{1,\dots,N(\Lambda)\big\}$ one has
$\lambda_j(A)\le \lambda_j(-\Delta^\Lambda_D)\le \lambda_j(B)$,
\item for any $j\in\NN$ one has
$\lambda_j(A)\le \lambda_j(-\Delta^C_{DN})$,
\end{itemize}
and a simple application of Theorem~\ref{thm1} gives the following assertion:
\begin{lemma}\label{lem15}
If for some $n\in\NN$ one has the strict inequalities
$\lambda_n(A)<\nu<\lambda_{n+1}(A)$  and  $\lambda_n(B)<\nu$,
then $N(\Lambda)=n$ and $\Lambda$ has no threshold resonance.
\end{lemma}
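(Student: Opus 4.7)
The plan is to deduce both $N(\Lambda)=n$ and the absence of a threshold resonance directly from Theorem~\ref{thm1}, using only the three bracketing observations listed just before the lemma; there is no need for any new construction. The essential point is that the hypotheses pin down $N(\Lambda)$ exactly, and this in turn makes the spectral condition $\lambda_{N(\Lambda)+1}(-\Delta^C_{DN})>\nu$ of Theorem~\ref{thm1} follow from the comparison between $A$ and $-\Delta^C_{DN}$.

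First I would establish $N(\Lambda)\ge n$. This is immediate from the first observation applied with $j=n$: since $\lambda_n(B)<\nu$, one has at least $n$ discrete eigenvalues of $-\Delta^\Lambda_D$ below the bottom of the essential spectrum. (Concretely, the lowest $n$ Dirichlet eigenfunctions of $\Pi_{a,b}$, extended by zero to $\Lambda$, form an $n$-dimensional test subspace whose Rayleigh quotients are all strictly less than $\nu$.)

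Second, I would show $N(\Lambda)\le n$ by contradiction. Assume $N(\Lambda)\ge n+1$. Then $\lambda_{n+1}(-\Delta^\Lambda_D)$ is a genuine discrete eigenvalue, so $\lambda_{n+1}(-\Delta^\Lambda_D)<\nu$. Since $n+1\in\{1,\dots,N(\Lambda)\}$, the second observation is applicable and yields
\[
\lambda_{n+1}(A)\le\lambda_{n+1}(-\Delta^\Lambda_D)<\nu,
\]
contradicting the hypothesis $\lambda_{n+1}(A)>\nu$. Hence $N(\Lambda)=n$.

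Finally, with $N(\Lambda)=n$ in hand, the absence of threshold resonance will follow from Theorem~\ref{thm1} once I verify $\lambda_{n+1}(-\Delta^C_{DN})>\nu$. Here I invoke the third observation: since $A$ relaxes $-\Delta^C_{DN}$ to a pure Neumann boundary condition on the entire side of length $b$ to which the branches attach (instead of only on the two attachment segments), its form domain is larger, so
\[
\lambda_{n+1}(-\Delta^C_{DN})\ge\lambda_{n+1}(A)>\nu,
\]
and Theorem~\ref{thm1} applies. The only subtlety worth highlighting is that $A$ and $B$ play complementary roles: $B$ furnishes the lower bound on $N(\Lambda)$ via eigenfunctions supported in $C$, while $A$ provides a uniform lower bound on the spectrum of $-\Delta^\Lambda_D$ (and of $-\Delta^C_{DN}$) through the Neumann bracketing at $\partial C$; needing both is what makes the assumption set slightly asymmetric, but neither step presents any real obstacle beyond the min-max principle.
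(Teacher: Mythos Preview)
Your proof is correct and is exactly the argument the paper has in mind: the paper simply states that the lemma follows from the three bracketing observations and Theorem~\ref{thm1}, and you have faithfully filled in those details. (Note that the hypothesis $\lambda_n(A)<\nu$ is in fact redundant, since $\lambda_n(A)\le\lambda_n(B)<\nu$ automatically; your argument correctly does not rely on it.)
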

The operators $A$ and $B$ admit a separation of variables, and their eigenvalues
are the numbers
\begin{align*}
\mu_{m,n}(A):&=\pi^2 \Big( \dfrac{(2m-1)^2}{4a^2}+ \dfrac{n^2}{b^2}\Big), &m,n\in\NN,\\
\mu_{m,n}(B):&=\pi^2 \Big( \dfrac{m^2}{a^2}+ \dfrac{n^2}{b^2}\Big), & m,n\in\NN,
\end{align*}
respectively, enumerated in the non-decreasing order. Therefore, the following result holds:
\begin{prop}\label{propell}
Let $a$ and $b$ satisfy the inequalities
\begin{equation}
 \label{eq-aba}
a>0, \quad b>2,
\quad
\dfrac{4}{a^2}+\dfrac{1}{b^2}<1<\dfrac{25}{4a^2}+\dfrac{1}{b^2},
\quad
1< \dfrac{1}{4a^2}+\dfrac{4}{b^2},
\end{equation}
then the Dirichlet Laplacian in $\Lambda$ has exactly two discrete eigenvalues
and no threshold resonance.
\end{prop}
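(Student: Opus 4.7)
The plan is to reduce Proposition~\ref{propell} to Lemma~\ref{lem15} with $n=2$ and check the three numerical inequalities $\lambda_2(B)<\pi^2$, $\lambda_2(A)<\pi^2$, and $\lambda_3(A)>\pi^2$ directly from the explicit separated spectra $\mu_{m,n}(A)$ and $\mu_{m,n}(B)$. Since $\nu=\pi^2$, each such inequality amounts to counting pairs $(m,n)\in\NN^2$ for which the corresponding parenthesis in the formulas for $\mu_{m,n}(A)$ or $\mu_{m,n}(B)$ is less than $1$ (or greater than $1$).

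First I would handle the two easy bounds $\lambda_2(B)<\pi^2$ and $\lambda_2(A)<\pi^2$. The assumption $\tfrac{4}{a^2}+\tfrac{1}{b^2}<1$ in \eqref{eq-aba} immediately gives $\mu_{2,1}(B)<\pi^2$, which combined with $\mu_{1,1}(B)<\mu_{2,1}(B)$ produces two eigenvalues of $B$ below $\pi^2$, so $\lambda_2(B)<\pi^2$. Since $\tfrac{9}{4}<4$, the same hypothesis yields $\mu_{2,1}(A)<\pi^2$ as well, so both $\mu_{1,1}(A)$ and $\mu_{2,1}(A)$ lie below $\pi^2$ and therefore $\lambda_2(A)<\pi^2$.

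The remaining step is to establish $\lambda_3(A)>\pi^2$, i.e.~to rule out any eigenvalue of $A$ below $\pi^2$ other than $\mu_{1,1}(A)$ and $\mu_{2,1}(A)$. Using monotonicity of $\mu_{m,n}(A)$ in each of the two indices, this reduces to the two boundary values $\mu_{3,1}(A)$ (controlling all $m\ge 3$, $n=1$, and hence all $m\ge 3$) and $\mu_{1,2}(A)$ (controlling all $n\ge 2$). These are exactly the two remaining hypotheses $\tfrac{25}{4a^2}+\tfrac{1}{b^2}>1$ and $\tfrac{1}{4a^2}+\tfrac{4}{b^2}>1$ of \eqref{eq-aba}, which give $\mu_{3,1}(A)>\pi^2$ and $\mu_{1,2}(A)>\pi^2$ respectively. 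Once this is checked, Lemma~\ref{lem15} applied with $n=2$ delivers both $N(\Lambda)=2$ and the absence of a threshold resonance.

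I do not expect any genuine obstacle here: all the analytic content has been absorbed into Theorem~\ref{thm1} and its specialization Lemma~\ref{lem15}, and what is left is a routine verification on a two-dimensional integer lattice against a pair of linear-in-$1/a^2,1/b^2$ constraints, which have been tailored so as to match the ordering of the spectra of $A$ and $B$ exactly. The only small subtlety worth highlighting in the write-up is that one should explicitly note why no $(m,n)$ with $m\ge 2$ and $n\ge 2$ can sneak in below $\pi^2$, but this is immediate from $\mu_{m,n}(A)\ge \mu_{1,2}(A)$ for $n\ge 2$.
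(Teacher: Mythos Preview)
Your proposal is correct and follows essentially the same route as the paper: rewrite the hypotheses \eqref{eq-aba} as $\mu_{2,1}(B)<\nu$, $\mu_{3,1}(A)>\nu$, $\mu_{1,2}(A)>\nu$, deduce $\lambda_2(A)<\nu$, $\lambda_2(B)<\nu$, $\lambda_3(A)>\nu$ by monotonicity in the indices, and then invoke Lemma~\ref{lem15} with $n=2$. The only cosmetic difference is that the paper obtains $\mu_{2,1}(A)<\nu$ from $\mu_{j,k}(A)<\mu_{j,k}(B)$ rather than from your observation $\tfrac{9}{4}<4$, and it leaves your final remark about $m\ge 2,\ n\ge 2$ implicit.
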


\begin{proof}
The inequalities \eqref{eq-aba} can be rewritten as $\mu_{2,1}(B)<\nu<\mu_{3,1}(A)$
and $\mu_{1,2}(A)>\nu$. As for $(j,k)\in\NN\times\NN$ there holds $\mu_{j,k}(A)<\mu_{j,k}(B)$,
we arrive at $\lambda_2(A)=\mu_{2,1}(A)<\nu$ and $\lambda_2(B)=\mu_{2,1}(B)<\nu$
together with $\lambda_3(A)>\nu$, and the result follows from Lemma~\ref{lem15} with $n=2$.
\end{proof}

\begin{figure}
\centering
\centering
\begin{tabular}{cc}
\begin{minipage}[c]{50mm}
\begin{center}
\includegraphics[height=18mm]{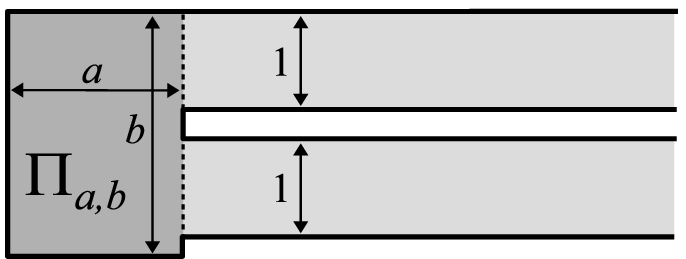}
\end{center}
\end{minipage}
&
\begin{minipage}[c]{80mm}
\begin{center}
\includegraphics[height=50mm]{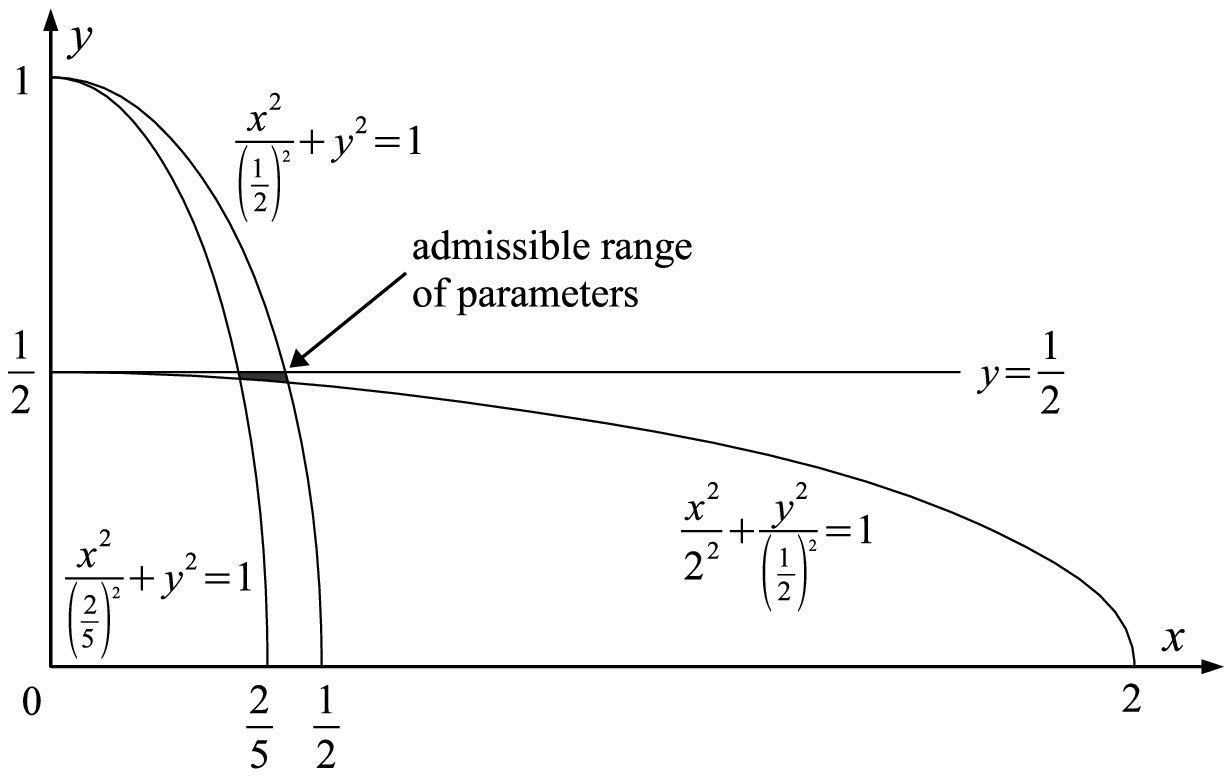}
\end{center}
\end{minipage}\\
(a) & (b)
\end{tabular}
\caption{ (a) The waveguide $\Lambda_{a,b}$.
(b) The range of parameters $x=1/a$ and $y=1/b$ for which the assumptions of Proposition~\ref{propell}
are satisfied. \label{figell}}
\end{figure}

At last we remark that the set of $(a,b)$ given by \eqref{eq-aba}
is non-empty. To see this, denote $x:=\frac{1}{a}$ and $y:=\frac{1}{b}$,
then the conditions \eqref{eq-aba} read as
\[
x>0, \quad 0<y<\dfrac{1}{2}, \quad
\dfrac{x^2}{\big(\frac{1}{2}\big)^2}+y^2<1<\dfrac{x^2}{\big(\frac{2}{5}\big)^2}+y^2,
\quad
1<\dfrac{x^2}{2^2}+\dfrac{y^2}{\big(\frac{1}{2}\big)^2},
\]
and have a simple geometric representation, see Figure~\ref{figell}(b).

\subsection{Three-dimensional configurations}

The analysis of three dimensional domains is much harder due
to a greater variety of possible shapes for both the cross-sections
and the central domains, see e.g.~\cite{bakh,naz12}, so we just
mention two examples.

The first one, $\Lambda_\square$, consists of
three copies of half-infinite cylinders whose cross-section
is a unit square attached to three mutually adjacent faces on a unit cube,
see Figure~\ref{fig-3d}(a). One has $\omega=(0,1)\times(0,1)$
with $\nu=2\pi^2$, and the existence of a non-empty discrete spectrum
follows by the domain monotonicity from the comparison with $\Lambda_{\frac \pi 4}\times(0,1)$, where
$\Lambda_{\frac \pi 4}$ is the broken waveguide of subsection~\ref{ss-brok}.
The associated operator $A_\square:=-\Delta^C_{DN}$ is the Laplacian in $(0,1)^3$
with the Dirichlet-Neumann combination of boundary conditions at each pair of opposite faces,
and its second eigenvalue is $\frac{11\pi^2}{4}>2\pi^2=\nu$. Hence, Corollary~\ref{cor1}
shows  the existence of a unique discrete eigenvalue
and the non-existence of a threshold resonance for $\Lambda_\square$.

The second configuration $\Lambda_{\mathrm{o}}$ consists
of three half-infinite circular cylinders of radius $\frac 1 2$
attached to three mutually adjacent faces of a unit cube,
see Figure~\ref{fig-3d}(b). One has then $\nu=4j_{0,1}^2$ with $j_{0,1}\simeq 2.405$
being the first zero of the Bessel function $J_0$, i.e. $\nu\simeq 23.1$,
and the existence of at least one discrete eigenvalue follows from the comparision
with a sharply bent  infinite cylinder of radius $\frac 1 2$ contained in $\Lambda_\mathrm{o}$,
see~\cite{gold}. The associated operator $-\Delta^C_{DN}$
can be minorated by the respective operator $A_\square$ from the previous example,
hence, $\lambda_2(-\Delta^C_{DN})\ge \frac{11\pi^2}{4}\simeq 27.1>\nu$, and Corollary~\ref{cor1}
shows that $\Lambda_\mathrm{o}$ has a single discrete eigenvalue
and no threshold resonance.

\begin{figure}
\centering
\begin{tabular}{cc}
\begin{minipage}[c]{70mm}
\begin{center}
\includegraphics[height=35mm]{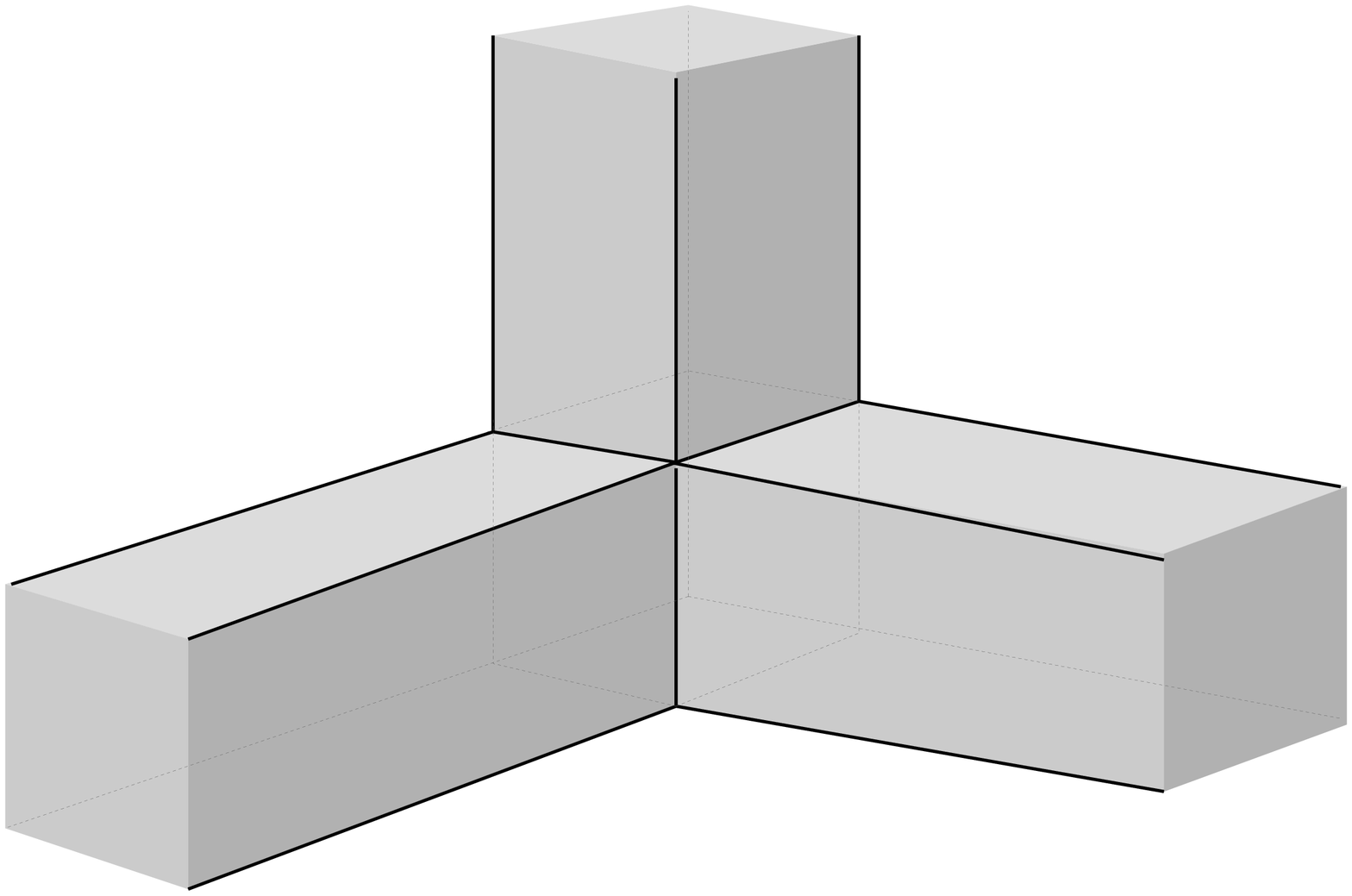}
\end{center}
\end{minipage}
&
\begin{minipage}[c]{70mm}
\begin{center}
\includegraphics[height=35mm]{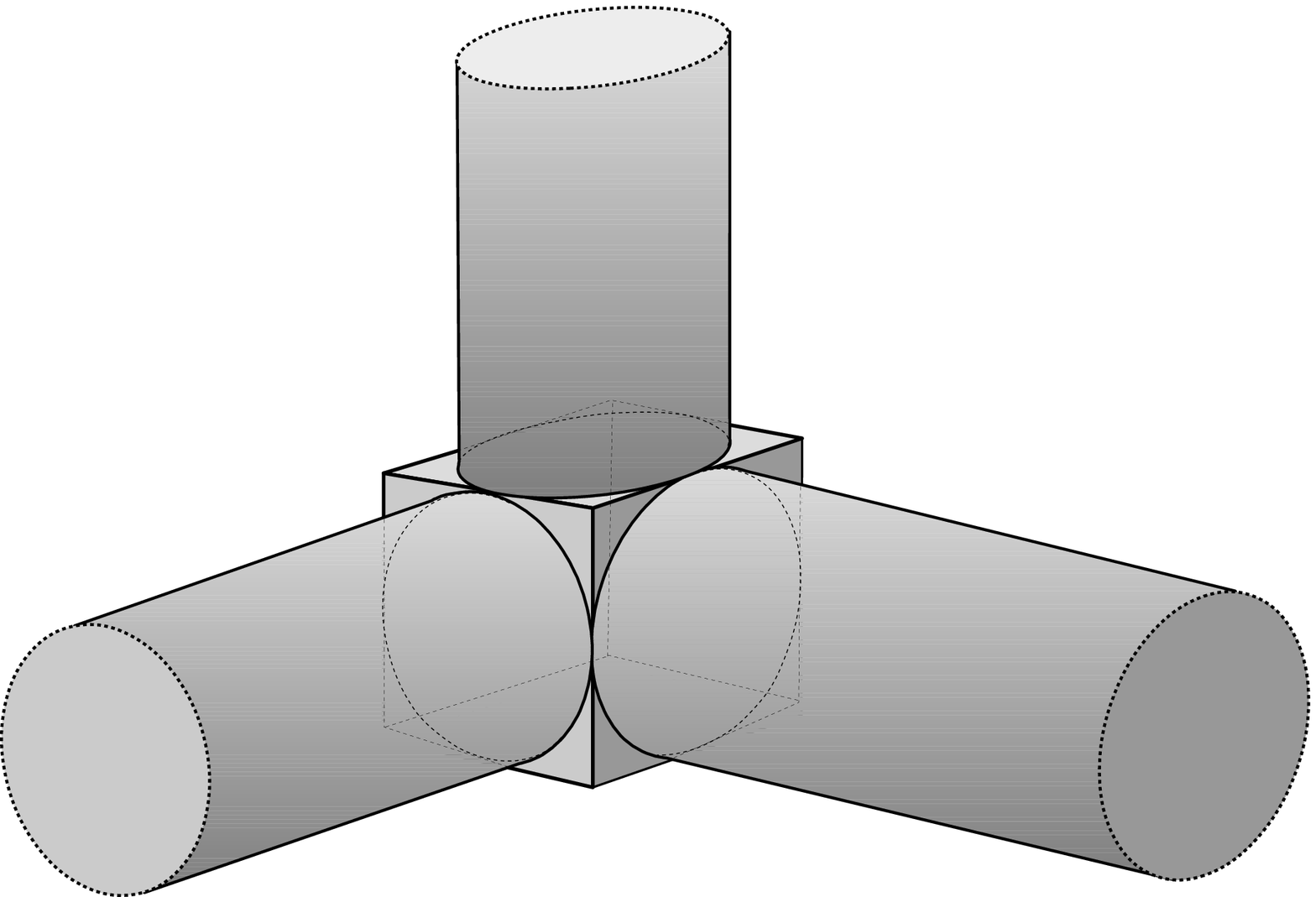}
\end{center}
\end{minipage}\\
(a) & (b)
\end{tabular}
\caption{(a) The three-dimensional waveguide $\Lambda_\square$. (b) The three-dimensional waveguide $\Lambda_\mathrm{o}$.\label{fig-3d}}
\end{figure}

In \cite{bakh,bakh2}, the intersection of two circular cylinders was considered, and
the analysis was more involved. In particular, it was shown using an asymptotic
estimate that the conditions
of Corollary~\ref{cor1} are satisfied if one chooses a sufficiently big center.

\appendix

\section{Proof of Proposition~\ref{prop10}}\label{proof10}
Recall that the sesquilinear form for $-\Delta^{\Lambda_\alpha}_D$ is
$q_{\alpha}(u,v)=\langle \nabla u, \nabla v \rangle_{L^2(\Lambda_\alpha)}$,
$u,v\in H^1_0(\Lambda_\alpha)$.
The domains $\Omega^\pm_\alpha:=\Lambda_\alpha \mathop{\cap} (\RR\times \RR_\pm)$
are isometric to $\Pi_\alpha :=\big\{(s,t): \,t\in(0,1), \, s+t\cot \alpha>0\big\}$
using the representation
\[
\Omega^\pm_\alpha= \big\{
s \sigma^\pm_\alpha+t\tau^\pm_\alpha: (s,t)\in \Pi_\alpha
\big\}, \quad
\sigma^\pm_\alpha:=(\cos\alpha, \pm \sin \alpha),
\quad
\tau^\pm_\alpha:=(-\sin\alpha,\pm \cos\alpha),
\]
see Figure~\ref{fig-st}.
For a function $u$ defined on $\Lambda_\alpha$
we denote by $u^\pm_\alpha$ the functions on $\Pi_\alpha$ defined by
$u^\pm_\alpha(s,t)=u(s\sigma^\pm_\alpha+t \tau^\pm_\alpha)$, $(s,t) \in \Pi_\alpha$,
then
\begin{align}
\|u\|^2_{L^2(\Lambda_\alpha)}&=\sum\nolimits_{\star\in\{+,-\}}
\int_{\Pi_\alpha} u^\star_\alpha (s,t)^2 \dd s\, \dd t, \nonumber\\
q_\alpha(u,u)&=\sum\nolimits_{\star\in\{+,-\}}
\int_{\Pi_\alpha} \bigg[\Big( \dfrac{\partial u^\star_\alpha}{\partial s } (s,t)\Big)^2+
\Big( \dfrac{\partial u^\star_\alpha}{\partial t } (s,t)\Big)^2 \bigg] \dd s\, \dd t.
  \label{eq-qst}
\end{align}

The linear map $\Phi_{\alpha,\beta}:L^2(\Lambda_\alpha)\to L^2(\Lambda_\beta)$
defined by
\[
\big(\Phi_{\alpha,\beta}\big)^\pm_\beta(s,t)= \sqrt{\dfrac{\tan\beta}{\tan\alpha}}u^\pm_\alpha\Big(\dfrac{\tan\beta}{\tan\alpha} s,t\Big),
\quad
(s,t)\in \Pi_\beta,
\]
is unitary with $\Phi_{\alpha,\beta} \big(H^1_0(\Lambda_\alpha)\big)=H^1_0(\Lambda_\beta)$,
and with the help of \eqref{eq-qst} one shows that for any $u,v\in H^1_0(\Lambda_\alpha)$ there holds
\begin{equation}
      \label{eq-qab}
q_\beta\big(\Phi_{\alpha,\beta}u,\Phi_{\alpha,\beta}v\big)
=q_\alpha(u,v)+\bigg(\Big(\dfrac{\tan\beta}{\tan\alpha}\Big)^2-1\bigg)
\sum\nolimits_{\star\in\{+,-\}}\int_{\Omega^\star_\alpha}
(\sigma^\star_\alpha\cdot \nabla u\big)(\sigma^\star_\alpha\cdot \nabla v\big)\, \dd x.
\end{equation}

\begin{figure}
\centering
\begin{tabular}{cc}
\begin{minipage}[c]{65mm}\includegraphics[height=40mm]{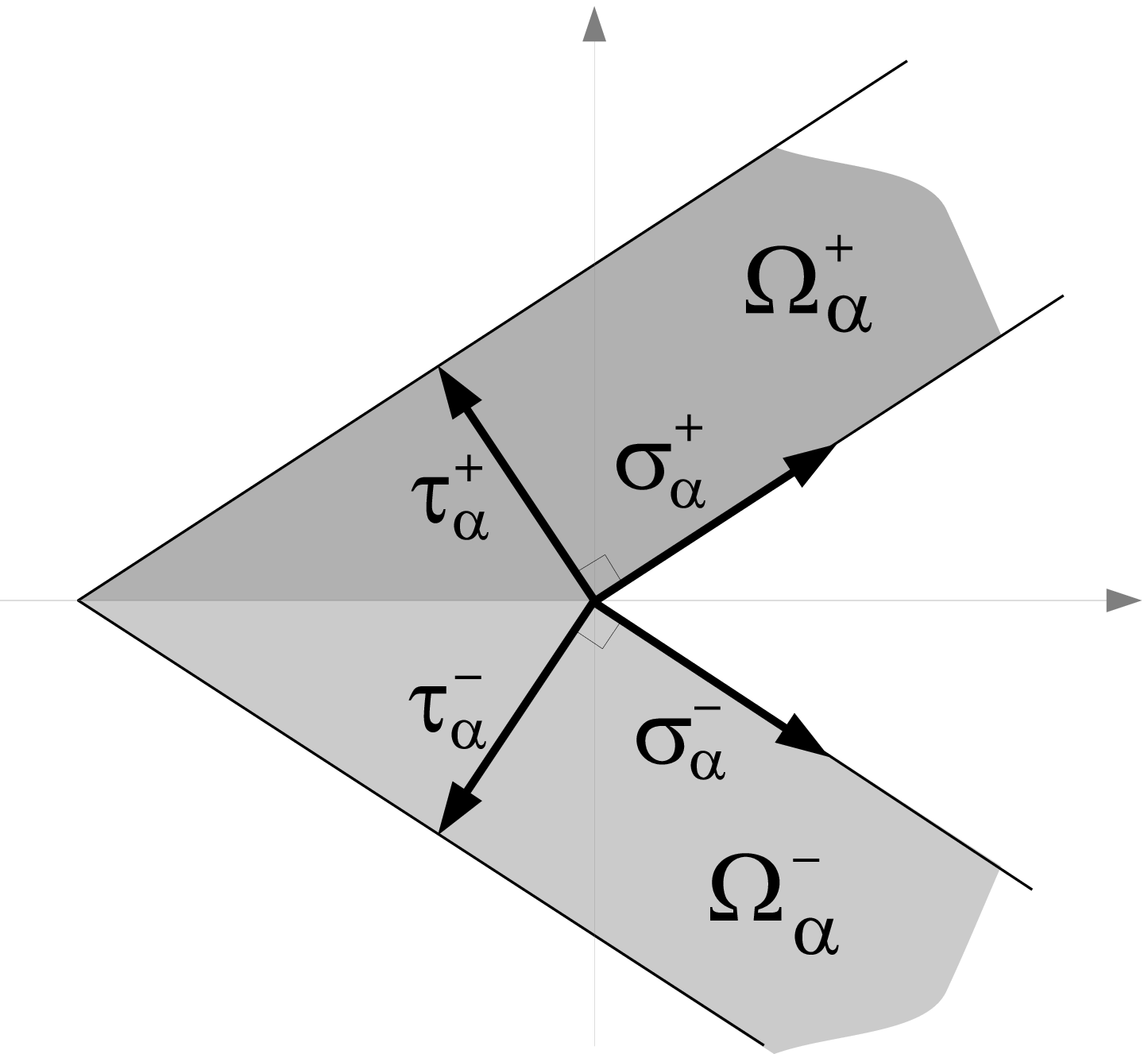}\end{minipage}
&
\begin{minipage}[c]{65mm}\includegraphics[height=25mm]{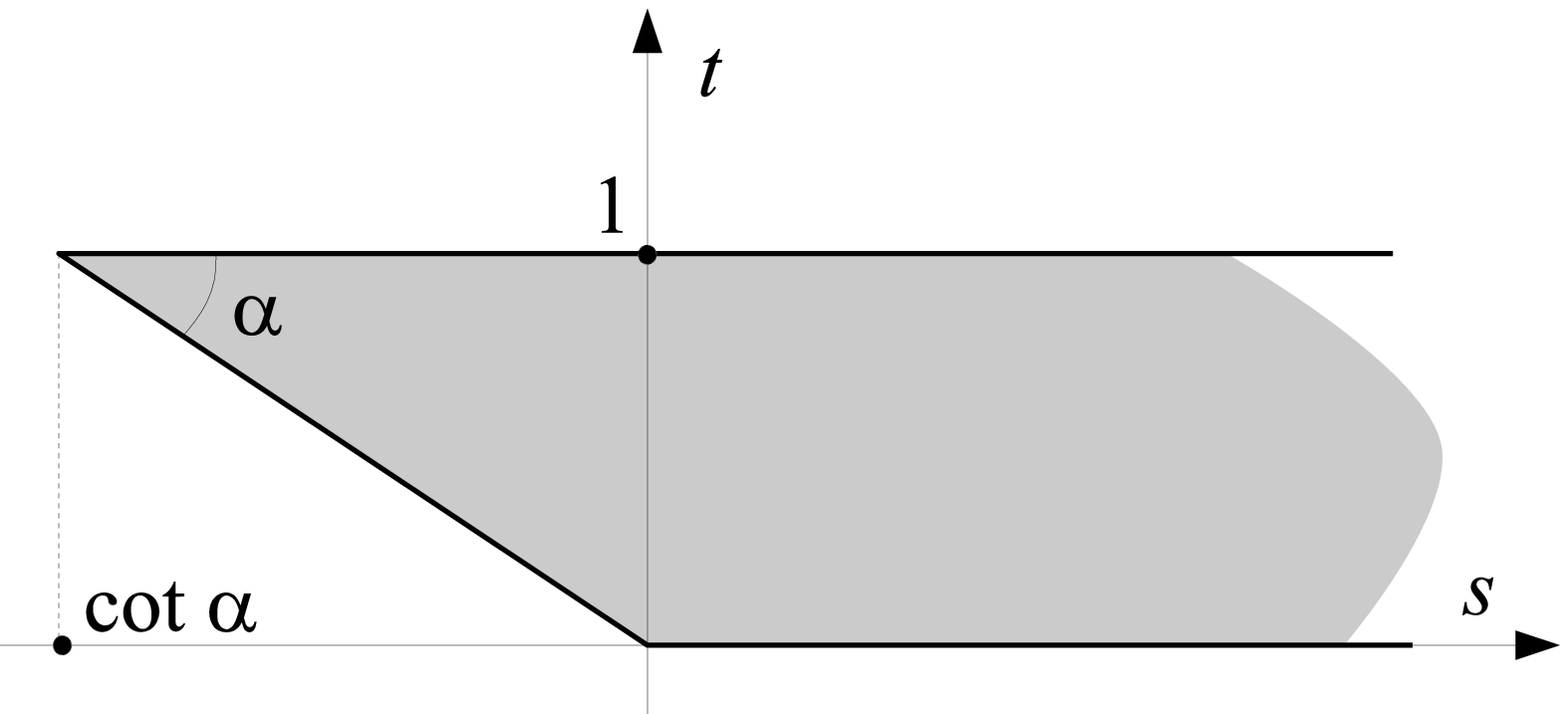}\end{minipage}\\
~\\
(a) & (b)
\end{tabular}

\caption{(a) Decomposition of $\Lambda_\alpha$ from subsection~\ref{ss-brok}.
(b) The set $\Pi_\alpha$.\label{fig-st}}
\end{figure}

Assume that $-\Delta^{\Lambda_\alpha}_D$ has exactly $n$
eigenvalues in $(-\infty,\pi^2)$, to be denoted $\lambda_1,\dots,\lambda_n$,
and choose an associated orthonormal family of eigenfunctions
of $-\Delta^{\Lambda_\alpha}_D$, i.e.
\begin{equation}
   \label{eq-ujuk}
\langle u_j,u_k\rangle_{L^2(\Lambda_\alpha)}=\delta_{jk},
\quad
-\Delta u_j=\lambda_j u_j, \quad j\in\{1,\dots, n\}.
\end{equation}
Furthermore, by assumption there exists a non-zero
bounded solution $u_0$ to \eqref{eq-bd} with $\Lambda=\Lambda_\alpha$.

Let $\beta\in (0,\alpha)$. Denote for shortness
$\gamma:=1-\Big(\dfrac{\tan\beta}{\tan\alpha}\Big)^2>0$.
We will show that $-\Delta^{\Lambda_\beta}_D$
has at least $n+1$ eigenvalues in $(-\infty,\pi^2)$.
By the min-max principle, it is sufficient to show that there exists a
linearly independent family
$(v_0,\dots v_n)\subset H^1_0(\Lambda_\beta)$ such that
\begin{gather}
      \label{eq-mmm}
\sup_{\xi\in \RR^{n+1},\, |\xi|=1} \langle \xi, M\xi\rangle_{\RR^{n+1}}<0,\\
M=(m_{jk}),
\quad
m_{jk}=q_\beta(v_j,v_k)-\pi^2\langle v_j,v_k\rangle_{L^2(\Lambda_\beta)},
\quad j,k\in\{0,\dots,n\big\}.\nonumber
\end{gather}
We construct such a family as follows. Let us pick a $C^\infty$ cut-off function
$\chi:\RR\to[0,1]$ with $\chi(r)=1$ for $r\le 1$
and $\chi(r)=0$ for $r\ge 2$ and define $\varphi:\Lambda_\alpha\to \RR$ by
$\varphi(x)= \chi\big(x/|R|\big)$
with some $R>(\sin \alpha)^{-1}$, to be chosen later (the condition $R>(\sin \alpha)^{-1}$
ensures that support of $\varphi$ covers the ``tip'' on the domain),
and set
$v_0:=\Phi_{\alpha,\beta}(\varphi u_0)$ and $v_j=\Phi_{\alpha,\beta}(u_j)$
for $j\in\{1,\dots,n\}$.
As $\Phi_{\alpha,\beta}$ is an isomorphism, it follows
from Lemma~\ref{lem2} that $v_0,\dots,v_n$ are linearly independent.
Denote $B=(b_{jk})$ with
\[
b_{jk}:= \sum\nolimits_{\star\in\{+,-\}}\int_{\Omega^\star_\alpha}\big(\sigma^\star_\alpha\cdot \nabla v_j\big)\big(\sigma^\star_\alpha\cdot \nabla v_k\big)\, \dd x,
\quad j,k\in\{0,\dots,n\big\},
\]
then due to \eqref{eq-qab} we can represent $M=A-\gamma B$ with $A=(a_{jk})$
with $a_{jk}$ given by \eqref{eq-coefa}, and the estimates of Subsection~\ref{ss21} show that,
with a suitable $a>0$,
\begin{equation}
    \label{eq-aaa2}
\sup_{\xi\in \RR^{n+1},\, |\xi|=1} \langle \xi, A\xi\rangle_{\RR^{n+1}} \le a R^{-\frac 12} \text{ for } R\to+\infty.
\end{equation}
Let us show that
\begin{equation}
     \label{eq-bbb2}
\text{there exists $b>0$ such that } 
\inf_{\xi\in \RR^{n+1},\, |\xi|=1} \langle \xi, B\xi\rangle_{\RR^{n+1}}\ge b \text{ for } R\to +\infty.
\end{equation}
We remark first that for any $\xi=(\xi_0,\dots,\xi_n)\in\RR^{n+1}$ there holds
\begin{equation}
    \label{eq-loc1}
\langle \xi, B\xi\rangle_{\RR^{n+1}}=
\sum\nolimits_{\star\in\{+,-\}}\int_{\Omega^\star_\alpha} \Big(\sigma^\star_\alpha\cdot \nabla (\xi_0\varphi u_0 +\sum\limits_{j=1}^n \xi_j u_j\big)\Big)^2\, \dd x.
\end{equation}
Choose some $R_0>(\sin\alpha)^{-1}$ and denote
$\Omega:=\Lambda_\alpha\mathop{\cap} \big\{ x\in \RR^2: |x|<R_0\big\}$.
As the subintegral function in \eqref{eq-loc1} is non-negative
and $\varphi=1$ on $\Omega$ for $R\ge R_0$, we arrive at
\[
\langle \xi, B\xi\rangle_{\RR^{n+1}}\ge 
\sum\nolimits_{\star\in\{+,-\}}\int_{\Omega\mathop{\cap}\Omega^\star_\alpha} \bigg(\sigma^\star_\alpha\cdot \nabla
\Big(\sum\nolimits_{j=0}^n \xi_j u_j\Big)\bigg)^2\, \dd x=:I(\xi),
\]
and to prove \eqref{eq-bbb} it is sufficient to check that
$\inf_{\xi\in \RR^{n+1},\, |\xi|=1} I(\xi)>0$.
Assume that the inequality is false, then due to the compactness of the unit ball of $\RR^{n+1}$
there exists $\xi=(\xi_0,\dots,\xi_n)$ with $|\xi|=1$ such that $I(\xi)=0$.
As the subintegral expression is non-negative, this implies
\begin{equation}
      \label{eq-ss}
\sigma^\pm_\alpha\cdot \nabla \Big(\sum\nolimits_{j=0}^n \xi_j u_j\Big)=0 \text{ in } \Omega\mathop{\cap} \Omega^\pm_\alpha.
\end{equation}
As each $u_j$ is a (generalized) Laplacian eigenfunction, it is $C^2$ inside $\Lambda_\alpha$,
and, due to \eqref{eq-ss},
\[
\sum\nolimits_{j=0}^n \xi_j u_j (x)= \psi^\pm (\tau^\pm_\alpha\cdot x), \quad x\in \Omega\mathop{\cap} \Omega^\pm_\alpha
\]
with some $C^2$ functions $\psi^\pm: (0,1)\to \RR$. Furthermore, the function $w$ given by
\[
w(x)=\psi^\pm (\tau^\pm_\alpha\cdot x) \text{ for } x\in \Omega\mathop{\cap} \Omega^\pm_\alpha,
\]
coincides with a linear combination of $u_j$ and, hence,
extends to a $C^2$ function in $\Omega$. In particular,
\[
w(x_1,0-)=w(x_1,0+), \quad
\dfrac{\partial w}{\partial x_2}(x_1,0-)=\dfrac{\partial w}{\partial x_2}(x_1,0+),
\quad x_1\in\big(-(\sin\alpha)^{-1}, 0\big),
\]
which results in the the following conditions for $\psi^\pm$, valid
for all $x_1\in\big(-(\sin\alpha)^{-1}, 0\big)$:
\[
\psi^-(-x_1 \sin \alpha)=\psi^+(-x_1 \sin \alpha),\quad
(-\cos \alpha) (\psi^-)'(-x_1 \sin \alpha)= (\cos \alpha) (\psi^+)'(-x_1 \sin \alpha).
\]
The first condition shows that $\psi^+=\psi^-=:\psi$, and the second one implies
that $\psi$ is constant. As the above-mentioned function $w$
satisfies the Dirichlet boundary conditions at
$\partial \Omega \mathop{\cap} \partial \Lambda_\alpha$,
we have $\psi\equiv 0$ and $\xi_0 u_0+\dots+\xi_j u_j=0$ in $\Omega$,
and $\xi=0$ by Lemma~\ref{lem1}. This contradiction with $|\xi|=1$
shows the claim \eqref{eq-bbb2}.
Finally, the combination of \eqref{eq-aaa2} and \eqref{eq-bbb2} shows
that the sought inequality \eqref{eq-mmm} is valid for large $R$.


\begin{thebibliography}{99}

\bibitem{avi}
Y. Avishai, D. Bessis, B. G. Giraud, G. Mantica:
\emph{Quantum bound states in open geometries.}
Phys. Rev. B {\bf 44} (1991) 8028--8034.

\bibitem{bakh}
F. L. Bakharev, S. G. Matveenko, S. A. Nazarov: \emph{Spectra of three-dimensional cruciform and lattice quantum waveguides.}
Dokl. Math. {\bf 92} (2015) 514--518.

\bibitem{bakh2}
F. L. Bakharev, S. G. Matveenko, S. A. Nazarov: \emph{Discrete spectrum of $\times$-shaped waveguide.}
Algebra i Analiz {\bf 28} (2016) 58--71, in Russian. English version
appears in St. Petersburg Math. J.


\bibitem{bk} G.~Berkolaiko, P.~Kuchment:
\emph{Introduction to quantum graphs.} Math. Surv. Monogr., Vol.~186, Amer. Math. Soc., 2013.

\bibitem{bor1}
D. Borisov, P. Exner, R. Gadyl'shin: \emph{Geometric coupling thresholds in a two-dimensional strip.} J.~Math. Phys. {\bf 43} (2002) 6265--6278.

\bibitem{bor2}
D. Borisov, P. Exner, A. Golovina: \emph{Tunneling resonances in systems without a classical trapping.} J.~Math. Phys. {\bf 54} (2013) 012102.

\bibitem{bulla}
W. Bulla,  F. Gesztesy, W. Renger, B. Simon:
\emph{Weakly coupled bound states in quantum waveguides.}
Proc. Amer. Math. Soc. {\bf 125} (1997) 1487--1495. 

\bibitem{dr1}
M. Dauge, Y. Lafranche, N. Raymond: \emph{Quantum waveguides with corners.}
ESAIM: Proc. {\bf 35} (2012) 14--45.

\bibitem{dr2}
M. Dauge, N. Raymond: \emph{Plane waveguides with corners in the small angle limit.}
J. Math. Phys. {\bf 53} (2012) 123529.

\bibitem{dell}
 G. F. Dell'Antonio, E. Costa: \emph{Effective Schr\"odinger dynamics on $\varepsilon$-thin Dirichlet waveguides
via quantum graphs: \textup{I.} Star-shaped graphs.}
J. Phys. A: Math. Theor. {\bf 43} (2010) 474014.

\bibitem{ekbook}
P. Exner, H. Kova\v r\'\i k:
\emph{Quantum waveguides.} Theor. Math. Phys., Vol.~22, Springer, 2015.

\bibitem{es}
P. Exner, P. \v Seba: \emph{Bound states in curved quantum waveguides.}
J. Math. Phys. {\bf 30} (1989) 2574--2580.

\bibitem{gold}
J. Goldstone, R. L. Jaffe: \emph{Bound states in twisting tubes.}
Phys. Rev. B {\bf 45} (1992) 14100--14107.

\bibitem{grieser}
D. Grieser: \emph{Spectra of graph neighborhoods and scattering.}
Proc. London Math. Soc.  {\bf 97} (2008) 718--752.

\bibitem{grieser2}
D. Grieser: \emph{Thin tubes in mathematical physics, global analysis and spectral geometry.}
In P. Exner, J.~Keating, P. Kuchment, T. Sunada, and A. Teplyaev (Eds.):
\emph{Analysis on graphs and its applications.} Proc. Symp. Pure Math., Vol.~77, AMS,
2008, pages 565--594.

\bibitem{34}
E.~K.~Ifantis, P. D. Siafarikas: \emph{A differential equation for the zeros of Bessel functions.} Appl. Anal. {\bf 20} (1985) 269--281.

\bibitem{kuch} P.~Kuchment: \emph{Graph models of wave propagation in thin structures.}
Waves Random Media {\bf 12} (2002) 1--24.

\bibitem{bind}
J. T. Londergan, J. P. Carini, D. P. Murdock:
\emph{Binding and scattering in two-dimensional systems.}
Lect. Notes. Phys. Monogr., Vol.~60, Springer, 1999.


\bibitem{mv}
S. Molchanov, B. Vainberg:
\emph{Scattering solutions in networks of thin fibers: small
diameter asymptotics.}
Commun. Math. Phys. {\bf 273} (2007) 533--559.

\bibitem{naz12}
S. A. Nazarov: \emph{Discrete spectrum of cranked, branching, and periodic waveguides.}
St. Petersburg Math.~J. {\bf 23} (2012) 351--379.

\bibitem{naz-t} S. A. Nazarov: \emph{Bounded solutions in a T-shaped waveguide and the spectral asymptotics of the Dirichlet ladder.}
Comput. Math. Math. Phys. {\bf 54} (2014) 1261--1279.

\bibitem{naz-hex} S. A. Nazarov, K. Ruotsalainen, P. Uusitalo:
\emph{Asymptotics of the spectrum of the Dirichlet Laplacian on a thin carbon nano-structure.}
C. R. Mecanique {\bf 343} (2015) 360--364.


\bibitem{naz12s} S. A. Nazarov, A. V. Shanin: \emph{Trapped modes in angular joints of waveguides.}
Appl. Anal. {\bf 93} (2014) 572--582.

\bibitem{post-dir}
O.~Post: \emph{Branched quantum wave guides with Dirichlet boundary conditions: the decoupling case.}
J.~Phys.~A: Math. Gen. {\bf 38} (2005) 4917--4932.

\bibitem{post-book}
O.~Post: \emph{Spectral analysis of graph-like spaces.} Lect. Notes Math.,
Vol.~2039, Springer, 2012.


\bibitem{prag} M. Pr\'ager: \emph{Eigenvalues and eigenfunctions of the Laplace operator on an equilateral triangle.}
Appl. Math. {\bf 43} (1998) 311--320.

\bibitem{rauch} J. Rauch: \emph{Local decay of scattering solutions
to Schr\"odinger's equation.} Comm. Math. Phys. {\bf 61} (1978) 149--168.

\end{thebibliography}
\end{document}